\definecolor{darkgreen}{rgb}{0,0.5,0}
\newtheorem{theorem}{Theorem}[section]
\newtheorem{proposition}[theorem]{Proposition}
\newtheorem{corollary}[theorem]{Corollary}
\newtheorem{lemma}[theorem]{Lemma}
\newtheorem*{theorem-non}{Theorem}
\newtheorem*{question-non}{Question}
\newtheorem{openprob}[theorem]{Open Problem}
\theoremstyle{remark}
\newtheorem{rem}[theorem]{Remark}
\newenvironment{dedication}
  {
   \thispagestyle{empty}
   \itshape             
   \raggedleft          
  }
  {
  }
\theoremstyle{definition}
\newtheorem{definition}[theorem]{Definition}
\newtheorem*{general}{General Assumption}
\newcommand\Oh{{\mathcal O}}
\newcommand\ze{\zeta}
\newcommand{\CC}{\ensuremath{\mathbb{C}}}
\newcommand{\RR}{\ensuremath{\mathbb{R}}}
\newcommand{\ZZ}{\ensuremath{\mathbb{Z}}}
\newcommand{\PP}{\ensuremath{\mathbb{P}}}
\newcommand{\ra}{\ensuremath{\rightarrow}}
\newcommand{\Sym}{\operatorname{Sym}}
\DeclareMathOperator{\divi}{div}
\DeclareMathOperator{\Deck}{Deck}
\DeclareMathOperator{\PSL}{PSL}
\DeclareMathOperator{\PGL}{PGL}
\DeclareMathOperator{\Aut}{Aut}
\DeclareMathOperator{\Stab}{Stab}
\DeclareMathOperator{\diag}{diag}
\DeclareMathOperator{\GL}{GL}
\DeclareMathOperator{\cone}{cone}
\DeclareMathOperator{\ord}{ord}
\numberwithin{equation}{section}
\newcounter{nootje}
\renewcommand\check[1]
\begin{document}
\title[On Rigid Manifolds of Kodaira Dimension 1]{On Rigid Manifolds of Kodaira Dimension  1}

\author{Ingrid Bauer, Christian Gleissner, Julia Kotonski}

\thanks{
\textit{2020 Mathematics Subject Classification}: 32G05; 14L30; 14J10; 14J40; 14M25; 14B05.\\
\textit{Keywords}: Rigid complex manifolds, deformation theory, quotient singularities, toric geometry. \\
The second author wants to thank Stephen Coughlan and Andreas Demleitner for interesting and useful conversations about rigid manifolds.}

\begin{abstract} 
We discuss rigid compact complex manifolds of Kodaira dimension 1, arising as product-quotient varieties. First, we show that there is no free rigid action on the product of $(n-1)$ elliptic curves and a curve of genus at least two. Then, we describe the occurring groups, study the quotients and prove that there is always a suitable resolution of singularities preserving rigidity. Finally, we give a complete classification of the cases arising for minimal group orders.
\end{abstract}

\maketitle
\begin{dedication}
Dedicated to the memory of Alberto Collino. 
\end{dedication}
\tableofcontents

\section{Introduction}

A compact complex manifold with no non-trivial deformations is called \emph{rigid}. In the seminal paper \cite{rigidity},  Bauer and Catanese discussed and posed various open questions and problems regarding rigid manifolds with certain geometric properties, among others  the relation between rigidity and Kodaira dimension. 

Clearly, the only rigid curve is  $\PP^1$. 
In the above paper, it was shown that 
rigid surfaces occur only in Kodaira dimension $-\infty$ and $2$, see 
\cite{rigidity}*{Theorem 1.3}.
Moreover, the authors showed that examples of rigid manifolds in any dimension $n\geq 3$ and any Kodaira dimension $\kappa= -\infty, 0, 2, 3,\ldots , n$, can easily be constructed. This indicates that in higher dimensions, rigid varieties should be more frequent and should appear with any  Kodaira dimension (see also \cite{beau} for $\kappa=0$). 

The missing case $\kappa=1$ was left as an open problem:

\begin{question-non}
Do there exist rigid compact complex manifolds of dimension $n \geq 3$ and Kodaira dimension $1$?
\end{question-non}

In  \cite{BG20}, the first two authors gave a positive answer to this question by the following  ad-hoc construction: 
let $F$ be the Fermat cubic curve and $Q$ the Klein quartic. The unique non-abelian group $G$ of order $21$ acts on these curves in a way such that the diagonal action on the product $F^{n-1} \times Q$ leads to a rigid singular product-quotient variety 
\[
X_n = (F^{n-1} \times Q) /G \qquad \makebox{for} \qquad n\geq 3.
\]
Moreover, the authors showed that there is  a  resolution of singularities $\rho \colon \hat{X}_n \to X_n$
such that $\hat{X}_n$ is still rigid and has Kodaira dimension $1$.

It seems  natural  to ask whether it is possible to find \qq{easier} examples, e.g., product-quotients by \emph{free} actions, so that we can avoid to deal with singular varieties.

In the present article, we show that, in contrast to all other Kodaira dimensions,  this is not possible.

Furthermore, it would be desirable to have a classification of all rigid projective manifolds of Kodaira dimension one arising as a suitable resolution of singularities of product-quotient varieties as it was done in \cite{bauergleissner2} for the case of Kodaira dimension $0$.

Guided by the above example, we are looking for  $X$, the quotient of a product of $(n-1)$ elliptic curves and a curve of genus at least two by a diagonal action of a finite group $G$, a normal projective variety with isolated canonical quotient singularities, Kodaira dimension $1$ and 
$H^1(X, \Theta_{X}) = 0$.

By a result of Schlessinger and by Kuranishi theory,   it follows  that $X$ is a rigid (singular) variety.

Since we are looking for rigid {\em manifolds}, we construct a suitable resolution $\rho \colon \hat{X} \ra X$ of singularities and show that 
$H^1(X, \Theta_{X}) = H^1(\hat{X}, \Theta_{\hat{X}})$. 

Our first main result is:
\begin{theorem-non}
    Let $G$ be a finite group admitting a rigid diagonal action on a product 
    $$E_1\times \ldots\times E_{n-1}\times C,$$
    where the $E_j$ are elliptic curves, $C$ is a curve of genus at least two and the action is faithful on each factor. Then:
    \begin{enumerate}
        \item The elliptic curves $E_j$ are all the same and either isomorphic to $\CC/\ZZ[i]$ or $\CC/\ZZ[\ze_3]$.
        \item The group $G$ is a semi-direct product $A\rtimes \ZZ_d$, where $A$ is abelian and $\ZZ_d$ is the cyclic group of order $d$, where $d=3,4$ or $6$.
        \item The action of $G$ is never free. The isolated fixed points descend to cyclic quotient singula\-ri\-ties of type
        \[
            \frac{1}{\ell}(1,\ldots, 1) \qquad \rm{or} \qquad \frac{1}{\ell}(1,\ldots, 1,\ell-1),
        \]
        where $\ell$ divides $d$. They are canonical if $n\geq d$.
        \item For each quotient $X=E^{n-1}\times C$, there exists a resolution of singularities, $\rho\colon\hat{X}\to X$, preserving the rigidity. If $n\geq d$, then $\hat{X}$ is a rigid manifold of Kodaira dimension one. 
    \end{enumerate}
\end{theorem-non}

By the rigidity of the action,  the curves  $C$ and  $E$ in the theorem are realized as triangle curves, i.e., Galois $G$-covers of the projective line $\mathbb P^1$ branched in three points $p_1,p_2$ and $p_3$. To such a cover, we can attach a generating triple for the group $G$ via the monodromy map associated to the cover: the elements are the images $g_i$ of simple loops $\gamma_i$ around $p_i$. They generate the group $G=A\rtimes\ZZ_d$ and fulfill the relation $g_1\cdot g_2\cdot g_3=1$. They will play a crucial role to derive our classification results since a lot of geometric properties of triangle curves are encoded in the generating triples. For a more detailed discussion we refer to Section~\ref{se3} or to the  paper \cite{IFG}.

It turns out that the geometry of the rigid quotient varieties $(E^{n-1}\times C)/G$
differs slightly depending whether the generating triple $S_C$ corresponding to the curve $C$ has elements belonging to $A$ or not. We write for short $S_C\subset G\setminus A$ or $S_C\not\subset G\setminus A$, respectively.

The first case is special because it can only occur for $d=6$. Here, all rigid quotients are obtained as finite covers of a minimal one. More precisely, we have:

\begin{theorem-non}
    If $S_C\subset G\setminus A$, then there exists a finite holomorphic cover onto a unique minimal rigid quotient
    \[
        f\colon (E^{n-1}\times C)/G\longrightarrow X_{min}:=(E^{n-1}\times C')/\ZZ_6,
    \]
    where $C'$ is the hyperelliptic curve 
    \[
    C':=\{y^2=x_0^6+x_1^6\}\subset\PP(1,1,3)
    \]
    of genus two 
    and $E=\CC/\ZZ[\ze_3]$ is the Fermat elliptic curve.
\end{theorem-non}

In the general case $S_C\not\subset G\setminus A$, we can classify  the minimal examples, i.e., the rigid  quotients $(E^{n-1} \times C)/G$, where the group $G=A \rtimes_{\varphi_d} \mathbb Z_d$ 
has the smallest group order. Among them, there is also the example of \cite{BG20} that we sketched above.

\begin{theorem-non}
    The smallest groups of the form $A\rtimes\ZZ_d$ allowing a rigid diagonal action on $E^{n-1}\times C$ such that the $G$-action on $C$ has a generating triple $S_C\not\subset G\setminus A$ are:
    \begin{itemize}
        \item $G_3=\langle s,t  ~ \big\vert ~ s^3=t^7=1,~ sts^{-1} =t^4 \rangle$,
        \item $G_4=\langle s,t ~ \vert ~ s^4=t^5 =1, ~ sts^{-1}=t^3 \rangle$, 
        \item $G_6=\langle s,t  ~ \big\vert ~ s^6=t^3=1,~ sts^{-1} =t^2 \rangle$. 
    \end{itemize}
    If $d=6$, then there is precisely one isomorphism class of such quotients.\\
    If $d=3,4$, then there are at most two isomorphism classes of such quotients, which can be identified under complex conjugation.\\
    In each case, the curves $C=C_d$  are uniquely determined up to isomorphism: $C_3$ is the Klein quartic, $C_4$ is Bring's curve and $C_6$ is a smooth curve on the Fermat cubic surface with equation
    \[
    x_0x_2+x_1x_3=0.
    \]
\end{theorem-non}
Unfortunately, we are not able to decide whether the complex conjugate  varieties in the above theorem (for $d=3,4$) are biholomorphic or not. We pose this as an open problem.

The paper is organized as follows: in Section \ref{se2}, we briefly discuss  rigid group actions  on complex manifolds, in particular diagonal actions on products of curves.  Section \ref{se3} is about the basic theory of {\em triangle curves}, i.e., Galois covers of the projective line $\mathbb P^1$  branched in three points. 
Here, we also introduce spherical gene\-rating triples of a finite group $G$, which is a convenient notion, commonly used to describe triangle curves from a group theoretical point of view.  
In Section \ref{se4}, we provide a detailed analysis of finite groups $G$ affording  a rigid diagonal action on a product of $(n-1)$ elliptic curves and a curve $C$ of genus at least two. These groups are always semi-direct products $A\rtimes \ZZ_d$, where $A$ is abelian and $d=3,4$ or $6$. We show that one of the elements in the  spherical generating system $S_C$ attached to the triangle curve $C$ must be  contained  in $A$, except possibly for  $d=6$. Here,
the case 
$S_C\subset G\setminus A$ can also occur. 
This exceptional case is discussed in Section \ref{se5}, while Section \ref{se6} is devoted to the general case $S_C \not\subset  G\setminus A$. In these two sections, we complete the proofs of our main theorems, except for the existence of resolutions of the singularities that preserve  the rigidity.  Such resolutions are constructed 
in the final Section \ref{sec:resolutions} using  methods from toric geometry that generalize the ideas of our previous work in {\cite{BG20}.


\section{Rigid Group Actions}\label{se2}

\noindent
In this section, we recall some notions and results concerning {\it rigid group actions on complex mani\-folds}. 
\begin{definition}
Let $X$ be a compact complex manifold, let $\Theta_X$  be its tangent sheaf and let $G$ be a finite group acting holomorphically  on $X$. We say that the action is {\it rigid} if and only if 
$H^1(X,\Theta_X)^G=0$. 
\end{definition}

\begin{rem}\label{firstremark} \
\begin{enumerate}
    \item To be precise, in the previous definition, the correct wording should be that the action is {\it infinitesimally rigid} because $H^1(X,\Theta_X)^G$ parametrizes the first order infinitesimal $G$-invariant deformations of $X$. But since \qq{infinitesimal rigidity} is the only type of rigidity we are considering in this paper, we say \qq{rigid} by a slight abuse of notation.
    \item If $G$ acts freely in codimension one, then there are isomorphisms
    $$H^i(X/G,\Theta_{X/G}) \cong H^i(X,\Theta_X)^G,\qquad\makebox{for all}
    \qquad i \geq 0.$$
    In particular, if the action is rigid, the quotient $X/G$ has no infinitesimal {\it equisingular deformations}, i.e., no deformations preserving the singularities of $X/G$, since they are parameterized by $H^1(X/G,\Theta_{X/G})$. If $\dim(X)\geq 3$ and if the action has only isolated fixed points, then every infinitesimal deformation of the quotient preserves its singularities due to a result of Schlessinger \cite{schlessinger}. Thus, in this case, the quotient is in fact rigid, if the action is rigid. 
    \item Let $C$ be a compact Riemann surface and $G \leq \Aut(C)$ be a finite group. Then, the $G$-action on $C$ is rigid if and only if 
    $C/G \cong \mathbb P^1$ and the quotient map $C \to C/G \cong \mathbb P^1$ is branched in three points. Such curves are called  {\em triangle curves} and $f \colon C \rightarrow C/G \cong \mathbb P^1$ is called a {\em triangle cover}.
\end{enumerate}

\end{rem}

Let $G$ be a finite group acting holomorphically on the compact complex manifolds $X_1, \ldots, X_n$, then obviously, the cartesian product $G^n$ acts on the the product $X_1 \times \ldots \times X_n$, hence also the diagonal 
$\Delta_G \cong G \leq G^n$ by the formula
$$
g(x_1, \ldots, x_n):=(g x_1, \ldots, g x_n).
$$
We call this action  the {\it diagonal} $G$-action. K\"unneth's formula allows us to derive a criterion for the rigidity of the diagonal action (see \cite{bauergleissner2}, Proposition 2.3.): 

\begin{proposition}\label{rigiddiag}
Let $G$ be a finite group acting holomorphically on  the compact complex manifolds $X_1, \ldots, X_n$. Then,  the diagonal action on
 $X_1 \times \ldots \times X_n$ is rigid if and only if: 
\begin{enumerate}
\item
the $G$ action on each $X_i$ is rigid and 
\item 
$\big(H^0(X_i,\Theta_{X_i}) \otimes H^1(X_j,\mathcal O_{X_j})\big)^G=0$ for all $i \neq j$.
\end{enumerate}
\end{proposition}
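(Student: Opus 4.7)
The plan is to apply the Künneth formula to $H^1(X,\Theta_X)$, where $X = X_1 \times \cdots \times X_n$, and then extract the $G$-invariants under the diagonal action.

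First, I would use that on a product of complex manifolds the tangent sheaf decomposes as $\Theta_X = \bigoplus_{i=1}^n p_i^*\Theta_{X_i}$, where $p_i \colon X \to X_i$ denotes the projection. This decomposition is $G$-equivariant, because every projection $p_i$ intertwines the diagonal $G$-action on $X$ with the given $G$-action on $X_i$. Hence it suffices to compute each $H^1(X,p_i^*\Theta_{X_i})$ as a $G$-module and take the direct sum.

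For a fixed $i$, the sheaf $p_i^*\Theta_{X_i}$ is the external tensor product with $\Theta_{X_i}$ in the $i$-th slot and $\mathcal O_{X_j}$ in every other slot. Applying Künneth and using that $H^0(X_j,\mathcal O_{X_j}) \cong \CC$ for all $j$ (the $X_j$ being compact and connected), only tuples $(a_1,\dots,a_n)$ with $\sum a_j = 1$ survive, which yields the $G$-equivariant isomorphism
\[
H^1(X, p_i^*\Theta_{X_i}) \;\cong\; H^1(X_i,\Theta_{X_i}) \;\oplus\; \bigoplus_{j \neq i} H^0(X_i,\Theta_{X_i}) \otimes H^1(X_j,\mathcal O_{X_j}),
\]
where each tensor product carries the natural diagonal $G$-action. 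Summing over $i$ produces a $G$-equivariant decomposition of $H^1(X,\Theta_X)$ as a direct sum of the spaces $H^1(X_i,\Theta_{X_i})$ and the mixed spaces $H^0(X_i,\Theta_{X_i}) \otimes H^1(X_j,\mathcal O_{X_j})$ for $i \neq j$.

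Finally, taking $G$-invariants commutes with direct sums (since $G$ is finite and we work over $\CC$), hence
\[
H^1(X,\Theta_X)^G \;=\; \bigoplus_{i=1}^n H^1(X_i,\Theta_{X_i})^G \;\oplus\; \bigoplus_{i \neq j} \bigl(H^0(X_i,\Theta_{X_i}) \otimes H^1(X_j,\mathcal O_{X_j})\bigr)^G.
\]
The vanishing of the left-hand side is equivalent to the vanishing of every summand on the right, which is exactly the conjunction of conditions (1) and (2). The only real point requiring care is the $G$-equivariance of Künneth's isomorphism, i.e., the fact that the diagonal action on cohomology of $X$ corresponds under Künneth to the tensor-product action on each summand. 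This follows from the functoriality of Künneth with respect to the maps induced by the projections $p_i$, and is the sole (minor) technicality in the argument.
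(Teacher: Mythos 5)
Your proof is correct and follows exactly the route the paper indicates: the paper does not reprove this proposition but cites it from an earlier work, explicitly attributing it to K\"unneth's formula applied to the $G$-equivariant decomposition $\Theta_X \cong \bigoplus_i p_i^*\Theta_{X_i}$, which is precisely your argument. The reduction via $H^0(X_j,\mathcal O_{X_j})\cong\CC$ with trivial $G$-action and the passage to invariants are handled correctly.
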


In this paper, we are mainly interested in the special case where the complex manifolds $X_i$ are compact Riemann surfaces $C$. There, it is convenient to rephrase the rigidity conditions of Proposition~\ref{rigiddiag} in terms of the characters $\chi_C$ of the canonical representation
\[
\rho_C\colon G \to \GL(H^0(C,\omega_C)), \quad g \mapsto [\alpha \mapsto (g^{-1})^{\ast}(\alpha)].
\]

\begin{corollary}\label{charconds}
Let $G$ be a finite group acting holomorphically on the elliptic curves $E_1,  \ldots, E_n$ and on the curves 
$C_{1}, \ldots, C_m$ of genus at least two, then the diagonal action on the product 
$$E_1 \times \ldots \times E_n \times C_{1} \times \ldots \times C_m$$
is rigid if and only if 
\begin{enumerate}
\item 
the $G$-action on each $E_i$ and $C_j$ is rigid,
\item
$\langle  \chi_{E_i}  \cdot \chi_{C_j}, \chi_{triv}\rangle =0 $ for all $1 \leq i \leq n$ and $1 \leq j \leq m$, and 
\item
$\chi_{E_i} \cdot \chi_{E_j} \neq \chi_{triv}$ for all $1 \leq i < j \leq n$.
\end{enumerate}
\end{corollary}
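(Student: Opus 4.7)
The plan is to deduce the corollary directly from Proposition~\ref{rigiddiag} by translating the cohomological condition
\[
\bigl(H^0(X_i,\Theta_{X_i}) \otimes H^1(X_j,\mathcal O_{X_j})\bigr)^G = 0 \quad \text{for all } i\neq j
\]
into character conditions via Serre/Hodge duality on curves. Condition (1) of the corollary reproduces condition (1) of the proposition verbatim, so only the invariance statement needs work.

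First I would discard the cases in which $X_i$ is a high-genus factor $C_i$, since $H^0(C_i,\Theta_{C_i})=0$ forces the tensor product to vanish automatically. Hence only pairs with $X_i = E_i$ elliptic are relevant, and $X_j$ still ranges over all remaining factors. For such an $E_i$ the isomorphism $\Theta_{E_i} \cong \omega_{E_i}^{-1}$, together with the one-dimensionality of $H^0(E_i,\omega_{E_i})$, shows that $G$ acts on $H^0(E_i,\Theta_{E_i})$ through the character $\overline{\chi_{E_i}}$. On the other hand, Serre duality yields an isomorphism of $G$-modules $H^1(X_j,\mathcal O_{X_j}) \cong H^0(X_j,\omega_{X_j})^{\vee}$, so the character of $H^1(X_j,\mathcal O_{X_j})$ is $\overline{\chi_{X_j}}$.

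Combining these identifications and using $\dim V^G = \langle \chi_V,\chi_{triv}\rangle$, the invariance condition becomes
\[
\bigl\langle \overline{\chi_{E_i}}\cdot\overline{\chi_{X_j}},\,\chi_{triv}\bigr\rangle \;=\; \overline{\bigl\langle \chi_{E_i}\cdot\chi_{X_j},\,\chi_{triv}\bigr\rangle} \;=\; 0,
\]
which is equivalent to $\langle \chi_{E_i}\cdot\chi_{X_j},\chi_{triv}\rangle = 0$. When $X_j = C_j$ has genus at least two, this is precisely condition~(2). When $X_j = E_j$ is elliptic, the product $\chi_{E_i}\cdot\chi_{E_j}$ is a one-dimensional character, and orthogonality to the trivial character is equivalent to $\chi_{E_i}\cdot\chi_{E_j}\neq\chi_{triv}$; this condition is symmetric in $i,j$, so it suffices to impose it for $i<j$, giving condition~(3). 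No serious obstacle is anticipated: once Serre duality has been used to pin down the $G$-module structure of $H^1(\mathcal O)$ and $\Theta_E$, the rest is standard character bookkeeping.
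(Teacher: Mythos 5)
Your proof is correct and follows essentially the same route as the paper: reduce condition (2) of Proposition~\ref{rigiddiag} to character conditions via Serre duality, using $H^0(\Theta_{C_j})=0$ to discard the genus $\geq 2$ curves as first tensor factor, and then observe that for the elliptic pairs a product of one-dimensional characters is orthogonal to $\chi_{triv}$ iff it is non-trivial. The only cosmetic difference is that the paper identifies the character of $H^1(\omega_{E_i}^{\otimes 2})$ (the Serre dual of $H^0(\Theta_{E_i})$) as $\chi_{E_i}$ via the Dolbeault description, whereas you compute the character of $H^0(\Theta_{E_i})$ directly as $\overline{\chi_{E_i}}$ from $\Theta_{E_i}\cong\omega_{E_i}^{-1}$; the two computations are complex conjugates of one another and produce the same vanishing conditions.
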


Recall that $\chi_{triv}$ denotes the trivial character and $\langle -,-\rangle$ is the inner product in the space of class functions of $G$. 

\begin{proof}
By Serre duality and since  $H^0(\Theta_{C_j})=0$, it follows that condition $(2)$ of Proposition \ref{rigiddiag} is equivalent to 
$$
\big(H^1(\omega_{E_i}^{\otimes 2}) \otimes H^0(\omega_{C_j})\big)^G =0  \  \rm{and} \  
\big(H^1(\omega_{E_i}^{\otimes 2}) \otimes H^0(\omega_{E_j})\big)^G =0. 
$$
 By Dolbeault's interpretation of cohomology 
$$
H^1(\omega_{E_i}^{\otimes 2}) \simeq H_{\overline{\partial}}^{1,1}(E_i,\omega_{E_i}) = \langle (dz \wedge d\overline{z}) \otimes dz \rangle
$$
and the fact that the $G$-action on  $dz  \wedge d\overline{z}$ is trivial, it follows that the character of the representation $G \to \GL(H^1(\omega_{E_i}^{\otimes 2}))$
is equal to $\chi_{E_i}$.  This proves the claim.
\end{proof}


\section{Group Theoretical Description of Triangle Curves}\label{se3}

In what follows, we briefly recall the theory of triangle covers from the group theoretical point of view. As mentioned in Remark~\ref{firstremark}(3), triangle curves are finite Galois covers
of the projective line branched on three points $\mathcal B:=\lbrace p_1,p_2,p_3 \rbrace \subset \mathbb P^1$.
The fundamental group 
$\pi_1(\mathbb P^1 \setminus \mathcal B, \infty)$ is generated by three simple loops $ \gamma_1,  \gamma_2$ and  $\gamma_3$ around the  points $p_1, p_2$ and $p_3$, which fulfill a single relation, namely
$$
\pi_1(\mathbb P^1 \setminus \mathcal B, \infty ) = \langle \gamma_1, \gamma_2, \gamma_3 ~ | ~ \gamma_1 \cdot \gamma_2  \cdot \gamma_3  =1 \rangle. 
$$


 
\begin{definition}
Let $G$ be a finite group. A triple $S=[g_1,g_2,g_3]$ of non-trivial group elements is called a {\em spherical triple of generators} or shortly a  \emph{generating triple} of $G$ if
\[
    G= \langle g_1,g_2,g_3\rangle\qquad\makebox{and} \qquad g_1\cdot g_2 \cdot g_3=1_G.
\]
The \emph{type of $S$} is  defined as $t(S):=[\ord(g_1),\ord(g_2),\ord(g_3)]$. 
\end{definition}

Observe that a  generating triple $S=[g_1,g_2,g_3]$ of a finite group $G$ defines a surjective homomorphism
$\eta_S \colon \pi_1(\mathbb P^1 \setminus \mathcal B, \infty ) \to G$, $\gamma_i \mapsto g_i$. By \emph{Riemann's existence theorem}, the homomorphism $\eta_S$ induces a Galois triangle cover $f_S \colon (C_S,q_0) \to (\mathbb P^1,\infty)$ with branch locus $\mathcal B$ together with a unique isomorphism $\psi \colon G \to \Deck(f_S)$ such that the composition 
\[
(\psi \circ \eta_S) \colon \pi_1(\mathbb P^1 \setminus \mathcal B, \infty ) \to  \Deck(f_S)
\]
is the monodromy map of the associated unramified cover. For details, we refer to the textbook \cite{miranda}*{Section~4}.


\begin{rem}\label{triangle}\
\begin{enumerate}
    \item A point $q\in C_S$ has non-trivial stabilizer if and only if it belongs to one of the fibres $f_S^{-1}(p_i),\,i=1,2,3$. In this case, its stabilizer group has order $n_i=\ord(g_i)$.
The genus of $C= C_S$, the order of $G$ and the  $n_i$ are related by  \emph{Hurwitz's formula}: 
$$
2g(C)-2=|G|\left(1- \frac{1}{n_1} - \frac{1}{n_2} - \frac{1}{n_3} \right).
$$
For this reason, $t(S)$ is also called the \emph{branching signature} of  $f_S$.
    \item Applying a projectivity of the base $\mathbb P^1$, we can and will assume $p_1=-1, p_2=0$ and $p_3=1$. Moreover, we shall use generators $\gamma_i,\,i=1,2,3,$ of $\pi_1(\mathbb P^1 \setminus \mathcal B, \infty )$ such that the complex conjugate $\bar{\gamma}_i$ is the inverse of $\gamma_i$ (for example, those described in \cite{IFG}*{p.~7}).
\end{enumerate}
\end{rem}

It is important to understand when two generating triples give the same triangle cover. 

\begin{definition}
A \emph{twisted covering isomorphism} of two triangle $G$-covers 
$f_i\colon C_i \to \mathbb P^1$, $i=1,2$,
branched on 
$\mathcal B=\lbrace -1,0,1 \rbrace$, is a pair  $(u,v)$ of biholomorphic maps
$$
u \colon C_1 \to C_2, \ \  v \colon \mathbb P^1 \to \mathbb P^1 
$$
such that $v(\mathcal B) = \mathcal B$ and $u \circ  f_1 = f_2 \circ v$. 
\end{definition}

\begin{rem}
Let $\psi_i \colon G \to \Deck(f_i)$ be the corresponding $G$-actions, then the existence of a twisted covering isomorphism is equivalent to 
the existence of an automorphism $\alpha \in \Aut(G)$ and a biholomorphism $u\colon C_1 \to C_2$ such that 
\[
\psi_2(\alpha(g))\circ u = u \circ \psi_1(g) \quad \makebox{for all} \quad g \in G.
\]
As we shall see, this holds if and only if the corresponding generating triples belong to the same orbit of a certain group action on the set $\mathcal S(G)$ of all generating triples of $G$:\\
first of all, there  is a natural action of the  \emph{Artin-Braid} group  
$$
\mathcal B_3:=\langle \sigma_1,\sigma_2 ~ \vert ~ \sigma_1\sigma_2\sigma_1 =\sigma_2 \sigma_1 \sigma_2 \rangle 
$$
on $\mathcal S(G)$ defined by: 
\begin{itemize}
\item
$\sigma_1([g_1,g_2,g_3]):= [g_1g_2g_1^{-1},g_1,g_3]$, 
\item
$\sigma_2([g_1,g_2,g_3]):= [g_1,g_2g_3g_2^{-1},g_2]$.
\end{itemize}
This action commutes with the diagonal action of an automorphism  $\alpha \in \Aut(G)$ given by
$$\alpha ([g_1,g_2,g_3]):=[\alpha( g_1) , \alpha( g_2) ,\alpha( g_3) ].$$
Thus, we get  a well-defined action of the group $\Aut(G) \times \mathcal B_3$  on $\mathcal S(G)$. 
\end{rem}

The following result can be found in \cite{IFG}:

\begin{theorem}\label{CoveringIsomorphisms}
Let $G$ be a finite group and $S,S' \in \mathcal S(G)$ be two generating triples of $G$. Then, the following are equivalent:
\begin{enumerate}
\item
There is a twisted covering isomorphism of the covers $f_S\colon C_S \to \mathbb P^1$ and $f_{S'}\colon C_{S'} \to \mathbb P^1$.
\item
The generating triples $S$ and $S'$ are in the same $(\Aut(G) \times \mathcal B_3)$-orbit.
\end{enumerate} 
\end{theorem}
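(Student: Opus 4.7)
My plan is to use Riemann's existence theorem to reduce the statement to a comparison of monodromy homomorphisms, and then to match the admissible transformations on the monodromy side with the stated action of $\Aut(G) \times \mathcal B_3$. The key topological input is the structure of the mapping class group of a thrice-punctured sphere with a marked base point.

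For the easy direction (2) $\Rightarrow$ (1), I would handle the two factors separately. Given $\alpha \in \Aut(G)$, relabelling the $G$-action on $C_S$ by precomposition with $\alpha^{-1}$ yields a new $G$-action on the same curve whose monodromy triple is $\alpha(S)$; the pair $(\Id_{C_S},\Id_{\mathbb P^1})$ is then a twisted covering isomorphism realising $\alpha$ on triples. For the braid generators, I would exhibit explicit half-twist homeomorphisms $v_1,v_2$ of $\mathbb P^1$ which fix $\infty$, permute $\mathcal B$ appropriately, and induce on the standard basis $[\gamma_1,\gamma_2,\gamma_3]$ of $\pi_1(\mathbb P^1\setminus\mathcal B,\infty)$ exactly the substitution rules defining $\sigma_1$ and $\sigma_2$. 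Since each $v_i$ pushes forward the monodromy datum in a controlled way, Riemann's existence theorem lifts it to a biholomorphism $u_i\colon C_S\to C_{\sigma_i(S)}$, producing the required twisted covering isomorphism $(u_i,v_i)$.

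For the harder direction (1) $\Rightarrow$ (2), let $(u,v)$ be a twisted covering isomorphism. The automorphism $\alpha \in \Aut(G)$ is obtained from the equivariance relation in the remark preceding the theorem. The self-biholomorphism $v$ of $\mathbb P^1$ permutes $\mathcal B$; after composing with an element of the finite group $\Aut(\mathbb P^1,\mathcal B\cup\{\infty\})$ I may assume $v(\infty)=\infty$, so that $v$ represents an element of the mapping class group $\mathrm{Mod}(\mathbb P^1;\mathcal B,\infty)$. The core topological fact is that this mapping class group is generated by the two half-twists exchanging consecutive branch points, and that these half-twists act on $[\gamma_1,\gamma_2,\gamma_3]$ precisely by the formulas defining $\sigma_1$ and $\sigma_2$. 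Hence $v$ transforms $\eta_S$ into a monodromy equivalent to $\eta_{\beta(S)}$ for some $\beta \in \mathcal B_3$, and combining with $\alpha$ shows that $S'$ and $S$ lie in the same $(\Aut(G)\times \mathcal B_3)$-orbit. The ambiguity arising from the fact that $u$ need not map the distinguished fibre point $q_0$ to $q_0'$ translates into a conjugation of $\alpha$ by an inner automorphism of $G$, which is harmlessly absorbed in the $\Aut(G)$-factor.

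The principal obstacle is the clean topological identification: one must match the combinatorial substitution rules of $\mathcal B_3$ on triples with the geometric action of half-twists on the chosen loops $\gamma_i$, and one must accommodate twisted covering isomorphisms that permute the branch points non-trivially. This requires knowing that the natural surjection from $\mathrm{Mod}(\mathbb P^1;\mathcal B,\infty)$ onto the symmetric group on $\mathcal B$ is realised already inside $\mathcal B_3$, so that a single braid can account for every admissible permutation of the branch points. Once these standard facts about mapping class groups of punctured spheres are in place, both implications follow from the naturality of the monodromy correspondence in Riemann's existence theorem.
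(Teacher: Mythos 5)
The paper itself does not prove this theorem; it quotes it from \cite{IFG}. Your framework --- Riemann's existence theorem plus the mapping class group of the thrice-punctured sphere --- is the standard one and matches the cited source, but your write-up has a genuine gap precisely at the point that makes the statement true for three branch points (and false, with the branch locus fixed, for more). In the direction $(2)\Rightarrow(1)$ you realise the braid generators by \emph{half-twist homeomorphisms} $v_1,v_2$ of $\mathbb P^1$ and claim that Riemann's existence theorem lifts them to biholomorphisms $u_i\colon C_S\to C_{\sigma_i(S)}$. This step fails: a twisted covering isomorphism requires $v$ to be biholomorphic, i.e.\ a M\"obius transformation preserving $\mathcal B$, and away from the branch points any lift $u$ is locally $f_{\sigma_i(S)}^{-1}\circ v_i\circ f_S$, so $u$ is holomorphic exactly when $v_i$ is. A half-twist is not holomorphic, so your pair $(u_i,v_i)$ is only a topological equivalence of covers. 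The repair is special to $|\mathcal B|=3$: replace $v_i$ by the unique M\"obius transformation inducing the same permutation of $\mathcal B$. That M\"obius map does not fix $\infty$, so after transporting base points along a path the induced transformation of the generating triple agrees with $\sigma_i$ only up to simultaneous conjugation by an element of $G$; this is an inner automorphism and is absorbed into the $\Aut(G)$-factor (note that $\sigma_1^2$ already acts on triples by simultaneous conjugation by $g_1g_2$, consistent with the fact that the pure mapping class group part is invisible to the isomorphism class of the cover). You invoke the inner-automorphism absorption only in the converse direction, and for a different reason, so it is missing exactly where it is needed.

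There is a smaller defect in $(1)\Rightarrow(2)$: you normalise $v(\infty)=\infty$ by \qq{composing with an element of the finite group $\Aut(\mathbb P^1,\mathcal B\cup\{\infty\})$}. This cannot work: $v(\infty)$ is an arbitrary point of $\mathbb P^1\setminus\mathcal B$, and no finite group of M\"obius maps preserving $\mathcal B$ will move it back to $\infty$; moreover such a composition need not preserve $\mathcal B$. The standard device is again a path from $\infty$ to $v(\infty)$ in $\mathbb P^1\setminus\mathcal B$ identifying the two fundamental groups up to inner automorphism, after which the fact you do cite correctly --- that the image of $\mathcal B_3$ in the mapping class group of the four-marked sphere realises every geometric basis change of $\pi_1(\mathbb P^1\setminus\mathcal B,\infty)$ up to conjugation --- closes the argument. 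With these two corrections your proof becomes the standard one.
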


In the remaining part of the section, we shall apply the above to products of triangle curves.

A diagonal rigid action on a product of curves $C_1\times\ldots\times C_n$ such that the action on each curve is faithful corresponds to an $n$-tuple $[S_1,\ldots,S_n]$ of generating triples of $G$. In order to classify the quotients, we need a criterion when two given tuples of generating triples yield isomorphic quotients.\\
Let $[S_1,\ldots,S_n]$ and $[S'_1,\ldots, S'_n]$ be two tuples of generating triples of a finite group $G$. Under the assumption that all curves have genus at least two and that the diagonal action of $G$ on the product of the curves is free, the argument in \cite{cat00} generalizes to: every biholomorphism $f\colon X\to X'$ between the quotients lifts to a biholomorphism $\hat{f}\colon C_{S_1}\times\ldots\times C_{S_n}\to C_{S'_1}\times\ldots\times C_{S'_n}$ of the form
        \[
            (z_1,\ldots,z_n)\mapsto (u_1(z_{\tau(1)}),\ldots,u_n(z_{\tau(n)}))\qquad \makebox{with}\qquad \tau\in\mathfrak S_n.
        \]
If the action is not free or if there occur curves of genus at most one, then it is not clear that any isomorphism $f\colon X\to X'$ lifts, and if a lift exists, it is not necessarily of the above form. 
Nevertheless, the existence of an isomorphism having a lift as above can easily be checked using the generating triples and Theorem~\ref{CoveringIsomorphisms}, which gives us at least a sufficient criterion when two quotients are biholomorphic:

\begin{corollary}\label{bihollift}
    Let $[S_1,\ldots,S_n]$ and $[S'_1,\ldots, S'_n]$ be two tuples of generating triples of a finite group $G$. Then, the following are equivalent:
    \begin{enumerate}
        \item There exists a biholomorphism $f\colon X\to X'$ between the quotients which lifts to a biholomorphism $\hat{f}\colon C_{S_1}\times\ldots\times C_{S_n}\to C_{S'_1}\times\ldots\times C_{S'_n}$ of the form
        \[
            (z_1,\ldots,z_n)\mapsto (u_1(z_{\tau(1)}),\ldots,u_n(z_{\tau(n)}))\qquad \makebox{with}\qquad \tau\in\mathfrak S_n.
        \]
        \item There exist $\alpha\in\Aut(G)$, $\delta_1,\ldots,\delta_n\in \mathcal B_3$ and $\tau\in \mathfrak S_n$ such that 
        \[
            S'_j= \alpha(\delta_j(S_{\tau(j)})) \qquad \makebox{for all}\qquad j=1,\ldots,n.
        \]
    \end{enumerate}
\end{corollary}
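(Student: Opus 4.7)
The proof is a direct application of Theorem~\ref{CoveringIsomorphisms} applied factor by factor, together with a small equivariance bookkeeping that ensures one and the same automorphism $\alpha\in\Aut(G)$ works for every component. I would split it into the two implications.

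For $(2)\Rightarrow (1)$, the plan is constructive. Given $\alpha\in\Aut(G)$, braids $\delta_j\in\mathcal B_3$ and a permutation $\tau\in\mathfrak S_n$ with $S'_j=\alpha(\delta_j(S_{\tau(j)}))$, I invoke Theorem~\ref{CoveringIsomorphisms} (in the form stated in the Remark preceding it) for each index $j$ to obtain a biholomorphism $u_j\colon C_{S_{\tau(j)}}\to C_{S'_j}$ satisfying $u_j(g\cdot z)=\alpha(g)\cdot u_j(z)$ for all $g\in G$. Crucially, the same $\alpha$ works for every $j$ because the hypothesis exhibits a single automorphism. Defining
\[
\hat f(z_1,\ldots,z_n):=\bigl(u_1(z_{\tau(1)}),\ldots,u_n(z_{\tau(n)})\bigr),
\]
one checks $\hat f(g\cdot z)=\alpha(g)\cdot\hat f(z)$, so $\hat f$ sends $G$-orbits to $G$-orbits and descends to a biholomorphism $f\colon X\to X'$ of the stated form.

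For $(1)\Rightarrow (2)$, I start with $\hat f$ of the prescribed shape inducing $f\colon X\to X'$. The descent condition says that for every $g\in G$ and every $z$ there is some $g'\in G$ with $\hat f(g\cdot z)=g'\cdot \hat f(z)$. Since $G$ is finite (hence discrete) and the product of curves is connected, $g'$ depends only on $g$; writing $g'=:\alpha(g)$ produces a map $\alpha\colon G\to G$. That $\hat f$ is a biholomorphism descending to a biholomorphism forces $\alpha$ to lie in $\Aut(G)$. Next, because $\hat f$ has the separated form $(z_1,\ldots,z_n)\mapsto (u_1(z_{\tau(1)}),\ldots,u_n(z_{\tau(n)}))$, each $u_j\colon C_{S_{\tau(j)}}\to C_{S'_j}$ is a biholomorphism, and the relation $\hat f(gz)=\alpha(g)\hat f(z)$ reads componentwise as $u_j(g\cdot z_{\tau(j)})=\alpha(g)\cdot u_j(z_{\tau(j)})$. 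Thus each $u_j$ (together with a suitable automorphism of $\mathbb P^1$ permuting the branch locus) is a twisted covering isomorphism with twist $\alpha$. Theorem~\ref{CoveringIsomorphisms} then yields, for each $j$, a braid $\delta_j\in\mathcal B_3$ such that $S'_j=\alpha(\delta_j(S_{\tau(j)}))$.

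The only subtle point, and the place where I would be most careful, is ensuring that the \emph{same} $\alpha\in\Aut(G)$ appears in every factor: a priori Theorem~\ref{CoveringIsomorphisms} only provides some $\alpha_j$ per factor, but the global existence of $\hat f$ forces $\alpha_j=\alpha$ for all $j$ by the connectedness argument above. This is what distinguishes the statement from the trivial factorwise assertion; once this is handled, both implications follow immediately from the theorem.
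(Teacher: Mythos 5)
Your proof is correct and follows exactly the route the paper intends: the paper states Corollary~\ref{bihollift} without proof as an immediate consequence of Theorem~\ref{CoveringIsomorphisms} and the preceding Remark, and your factorwise application with the connectedness/faithfulness argument pinning down a single $\alpha\in\Aut(G)$ is precisely the implicit argument, correctly identifying the only subtle point (that the twist $\alpha$ must be common to all factors).
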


Given a generating triple $S=[g_1,g_2,g_3]$ of $G$, the \emph{conjugate} of $S$ is defined as
\[
    \iota(S):=[g_1^{-1},g_1g_3,g_3^{-1}].
\]
In fact, since $\bar{\gamma}_i$ is the inverse of the path $\gamma_i$ (cf. Remark~\ref{triangle}), the conjugate triple yields the complex conjugate curve:

\begin{proposition}[\cite{IFG}, Proposition~2.3]
    For $S\in \mathcal S(G)$, it holds $\overline{C_S}=C_{\iota(S)}$.
\end{proposition}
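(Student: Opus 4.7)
The plan is to compute the monodromy of the complex conjugate cover $\overline{C_S} \to \PP^1$ and recognize it as $\iota(S)$. First, I would observe that the involution $\sigma \colon \PP^1 \to \PP^1,\,z \mapsto \bar z,$ preserves the real branch locus $\mathcal B = \{-1,0,1\}$ and fixes the base point $\infty$, hence induces a group automorphism $\sigma_\ast$ of $\pi_1(\PP^1 \setminus \mathcal B, \infty)$. Since, topologically, $\overline{C_S} \to \PP^1$ coincides with the pullback of $f_S$ along $\sigma$, its monodromy with respect to the standard generators $\gamma_1, \gamma_2, \gamma_3$ is the composition $\eta_S \circ \sigma_\ast$.

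Next, I would invoke the explicit system of loops from Remark~\ref{triangle}(2), as described on p.~7 of \cite{IFG}. The outer loops $\gamma_1$ and $\gamma_3$ can be chosen symmetric with respect to the real axis, so that their pointwise complex conjugates equal their inverses in $\pi_1$; this gives $\sigma_\ast(\gamma_1) = \gamma_1^{-1}$ and $\sigma_\ast(\gamma_3) = \gamma_3^{-1}$. The image of the middle generator is then forced by applying $\sigma_\ast$ to the sphere relation $\gamma_1 \gamma_2 \gamma_3 = 1$:
\[
\sigma_\ast(\gamma_2) = \sigma_\ast(\gamma_1)^{-1} \sigma_\ast(\gamma_3)^{-1} = \gamma_1 \gamma_3.
\]
Composing with $\eta_S$ yields the monodromy triple of $\overline{C_S}$ as
\[
[\eta_S(\gamma_1^{-1}),\, \eta_S(\gamma_1 \gamma_3),\, \eta_S(\gamma_3^{-1})] = [g_1^{-1},\, g_1 g_3,\, g_3^{-1}] = \iota(S),
\]
so by Riemann's existence theorem, $\overline{C_S} \to \PP^1$ is canonically identified with $C_{\iota(S)} \to \PP^1$.

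The main subtlety lies in the analysis of $\sigma_\ast$: naively asserting $\sigma_\ast(\gamma_i) = \gamma_i^{-1}$ for every $i$ is incompatible with the sphere relation, because $\pi_1(\PP^1 \setminus \mathcal B,\infty)$ is non-abelian and free on any two of the three generators. One must therefore rely on the concrete geometry of the chosen paths to control $\sigma_\ast$ on the two outer generators, and deduce its action on the middle generator formally from the relation. This asymmetric behavior $\sigma_\ast(\gamma_2) = \gamma_1 \gamma_3$ is precisely what produces the twisted middle entry $g_1 g_3$ of $\iota(S)$, rather than the naive guess $g_2^{-1}$.
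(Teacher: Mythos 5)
Your proof is correct and takes essentially the same route as the paper, which delegates the statement to \cite{IFG} and justifies it via the choice of loops in Remark~\ref{triangle}(2): one identifies the monodromy of $\overline{C_S}\to\PP^1$ with $\eta_S\circ\sigma_*$ and evaluates $\sigma_*$ on the chosen generators. Your added observation --- that $\sigma_*(\gamma_i)=\gamma_i^{-1}$ cannot hold for all three generators simultaneously in the free group, so that the relation forces $\sigma_*(\gamma_2)=\gamma_1\gamma_3$ --- is a correct sharpening of the paper's looser phrasing and is precisely what accounts for the middle entry $g_1g_3$ of $\iota(S)$ rather than $g_2^{-1}$.
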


\begin{rem}
    If $G$ is a finite group acting holomorphically on $X$, then, we obtain a natural holomorphic action of $G$ on the complex conjugate variety $\overline{X}$. 
    The  complex conjugate of the quotient $X/G$ is the same as the quotient of $\overline{X}$ by the natural $G$-action.
\end{rem}

\begin{corollary}
    The complex conjugate of the quotient corresponding to a tuple $[S_1,\ldots,S_n]$ of generating vectors of $G$ equals the quotient corresponding to $[\iota(S_1),\ldots,\iota(S_n)]$.
\end{corollary}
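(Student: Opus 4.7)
The corollary is essentially a factorwise application of the proposition together with the remark, so the proof plan is quite short.

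First, set $X := C_{S_1}\times\cdots\times C_{S_n}$ equipped with the diagonal $G$-action. Complex conjugation commutes with finite cartesian products of complex manifolds, so $\overline{X}$ decomposes naturally as $\overline{C_{S_1}}\times\cdots\times\overline{C_{S_n}}$, and the $G$-action on $\overline{X}$ induced by the given one on $X$ is again the diagonal of the induced $G$-actions on each factor $\overline{C_{S_j}}$.

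Next, apply the preceding proposition factorwise to obtain $\overline{C_{S_j}}=C_{\iota(S_j)}$ as $G$-curves for every $j=1,\ldots,n$. Taking the product of these identifications yields a $G$-equivariant biholomorphism
\[
\overline{X} \;\cong\; C_{\iota(S_1)}\times\cdots\times C_{\iota(S_n)},
\]
where the right-hand side carries its standard diagonal $G$-action. By the remark, the complex conjugate of $X/G$ coincides with $\overline{X}/G$, and combining with the above identification shows that this latter quotient is exactly the product-quotient associated with the tuple $[\iota(S_1),\ldots,\iota(S_n)]$, as claimed.

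The only point that requires a moment of thought is that the equality $\overline{C_S}=C_{\iota(S)}$ supplied by the proposition is not merely an isomorphism of underlying complex curves but an identification of $G$-curves: on the left the $G$-action is the one naturally induced on the conjugate of $C_S$, while on the right it is defined through the monodromy of the generating triple $\iota(S)$. This $G$-equivariance is however already built into the cited proposition (one inspects the choice of loops $\gamma_i$ in Remark~\ref{triangle}, which satisfy $\bar\gamma_i=\gamma_i^{-1}$, so that conjugation simply rewrites the monodromy data as $\iota(S)$ without altering the group acting). Given this, no further verification is needed and the corollary follows immediately.
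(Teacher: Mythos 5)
Your proof is correct and follows exactly the route the paper intends: the corollary is stated without proof as an immediate consequence of the preceding proposition ($\overline{C_S}=C_{\iota(S)}$) and the remark identifying $\overline{X/G}$ with $\overline{X}/G$, which is precisely the factorwise argument you give. Your added observation that the identification must be one of $G$-curves, not just of underlying complex manifolds, is a worthwhile point that the paper leaves implicit.
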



\section{Rigid actions on curves of genus $g\geq 1$}\label{se4}

In this section, we consider finite groups $G$ which admit a rigid action on curves of genus $g= 1$ and $g\geq 2$. For elliptic curves, i.e., in the case $g=1$, it is well-known that only very special groups can act faithfully on them. If we assume furthermore that this group $G$ also admits a faithful rigid action on a curve of genus $g\geq 2$, this will become even more restrictive.

Recall that the
automorphism group  of  an elliptic curve $E$ is a semidirect product 
$$\Aut(E)=E \rtimes \Aut_0(E),$$ 
where $\Aut_0(E) \cong \mathbb Z_2$, $\mathbb Z_4$ or $\mathbb Z_6$ (cf. \cite{miranda}*{Chapter III Proposition 1.12.}).
%


In \cite{bauergleissner2}*{Proposition~3.6}, the finite subgroups of $\Aut(E)$ allowing a rigid action on $E$ were classified:

\begin{proposition}\label{wallgrps}
A finite group $G$ admits a faithful rigid holomorphic action on an elliptic curve $E$ if and only if it is isomorphic to a semidirect product 
$$
A \rtimes_{\varphi_d} \mathbb Z_d, 
$$
where $d=3,4$ or $6$ and $A \leq \mathbb Z_n^2$ is a subgroup for some $n$,  invariant under the action 
$$
\varphi_d \colon  \mathbb Z_d \to \Aut\big(\mathbb Z_n^2\big), 
$$

defined  by:
\begin{itemize}
\item
$\varphi_3(1)(a,b)=(-b,a-b)$, 
\item
$\varphi_4(1)(a,b)=(-b,a)$ or
\item
$\varphi_6(1)(a,b)=(-b,a+b)$.
\end{itemize} The possible  branching signatures $[n_1,n_2,n_3]$ of the triangle cover $E \to E/G$, the  abelianizations of $G$ and the isomorphism types 
of $E$ are summarised in the table below:

{\begin{center}
\begin{tabular}{ c  c  c  c  }
  & $ $d=3$ ~ $ & $ ~ $d=4$ ~ $  & $ ~ $d=6$~ $   \\
 \hline \hline 
 $[n_1,n_2,n_3]$ & $ \quad   [3,3,3]  \quad   $ & $ \quad  [2,4,4]  \quad  $  & $ \quad  [2,3,6] \quad  $   \\

$G^{ab}$  & $ \quad \mathbb Z_3  $ or $ \mathbb Z_3^2   \quad $ & $ \quad \mathbb Z_4  $ or  $ \mathbb Z_2 \times \mathbb Z_4  \quad $  & $  \quad  \mathbb Z_6  \quad $  \\

  $E$ &  $\mathbb C/\mathbb Z[\zeta_3]$ & $\mathbb C/ \mathbb Z[i]$  &  $\mathbb C/ \mathbb Z[\zeta_3]$  \\
\hline

  \end{tabular}
  \end{center}
  }
\end{proposition}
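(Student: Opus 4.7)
My plan is to split the biconditional into the two implications and begin with \emph{necessity}. Given a faithful rigid action of a finite group $G$ on the elliptic curve $E$, by Remark~\ref{firstremark}(3) the quotient map $E \to E/G\cong\PP^1$ is a triangle cover, so Hurwitz's formula applied to $g(E)=1$ yields $\frac{1}{n_1}+\frac{1}{n_2}+\frac{1}{n_3}=1$, whose only solutions are the three signatures $[3,3,3]$, $[2,4,4]$, $[2,3,6]$ listed in the table.

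Next I would exploit the decomposition $\Aut(E)=E\rtimes\Aut_0(E)$. Setting $A:=G\cap E$ and $\bar G:=G/A$, one has $\bar G\hookrightarrow\Aut_0(E)\cong\mathbb{Z}_2,\mathbb{Z}_4$, or $\mathbb{Z}_6$, and the triangle cover factors as $E\to E/A\to E/G$ with the first map étale. For every element $g_i$ of a spherical triple associated to the cover, its image in $\bar G$ is non-trivial, since $g_i$ has a fixed point while a non-trivial translation of $E$ does not. Reading this off each of the three signatures forces $\bar G$ to be cyclic of order $d=3,4,6$ respectively; in particular $\Aut_0(E)$ contains $\mathbb{Z}_d$, which pins down $E$ as $\CC/\ZZ[\zeta_3]$ when $d\in\{3,6\}$ and as $\CC/\ZZ[i]$ when $d=4$. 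To upgrade the extension $1\to A\to G\to \mathbb{Z}_d\to 1$ to a split semidirect product I would take the generator $g_i$ in the spherical triple with $n_i=d$: it has a fixed point, so in suitable coordinates it is multiplication by a primitive $d$-th root of unity and therefore has order exactly $d$, providing a section. The conjugation action on $A$ is then induced from the $\Aut_0(E)$-action on $E$, which, restricted to a torsion subgroup $A\leq E[n]\cong\mathbb{Z}_n^2$ and written in a suitable basis, realises precisely the maps $\varphi_3,\varphi_4,\varphi_6$ in the statement.

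For \emph{sufficiency}, any $\varphi_d$-invariant $A\leq\mathbb{Z}_n^2\subset E$ embeds as $A\rtimes_{\varphi_d}\mathbb{Z}_d\hookrightarrow\Aut(E)$ via translations plus a rotation about a chosen fixed point; to conclude I would exhibit an explicit spherical generating triple of the prescribed signature, thereby showing that the associated cover $E\to E/G$ is a triangle cover, i.e.\ the action is rigid. The abelianization then follows from the standard formula $G^{ab}\cong A/\bigl(I-\varphi_d(1)\bigr)A\,\oplus\,\mathbb{Z}_d$. Direct calculation gives $\det(I-\varphi_d(1))=3,2,1$ for $d=3,4,6$, with Smith normal forms $\operatorname{diag}(1,3)$, $\operatorname{diag}(1,2)$, $\operatorname{diag}(1,1)$; so the cokernel on $\ZZ_n^2$ contributes $\ZZ_{\gcd(n,3)}$, $\ZZ_{\gcd(n,2)}$, or $0$, matching the tabulated abelianizations.

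The main obstacle is the case-by-case verification in the sufficiency step: one must enumerate the $\varphi_d$-invariant subgroups $A\leq\mathbb{Z}_n^2$ for each $d\in\{3,4,6\}$, construct a spherical triple realising the required signature on the corresponding $G=A\rtimes_{\varphi_d}\mathbb{Z}_d$, and compute the cokernel of $I-\varphi_d(1)$ \emph{on $A$} rather than on the ambient $\ZZ_n^2$, so as to confirm that indeed only the two abelianizations per value of $d$ listed in the table arise. The splitting argument for the extension (which relies on the fact that the branch element of order $d$ is automatically of order $d$ in $\Aut(E)$ because it has a fixed point) is the one conceptual step that requires care; everything else reduces to elementary $2\times2$ matrix computations over $\ZZ_n$.
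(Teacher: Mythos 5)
The paper does not actually prove Proposition~\ref{wallgrps}: it is quoted verbatim from \cite{bauergleissner2}*{Proposition~3.6}, so there is no in-paper argument to compare yours against. Judged on its own terms, your outline is the natural argument and is essentially correct. The key steps all check out: Hurwitz forces the three signatures; each $g_i$ of the spherical triple has a fixed point, hence is not a translation, and after moving that fixed point to the origin its linear part has the same order as $g_i$, which pins down $\bar G=G/(G\cap E)\cong\ZZ_d$ with $d=\lcm(n_1,n_2,n_3)\in\{3,4,6\}$ and the isomorphism type of $E$; the branch element of order $d$ gives the splitting; and conjugation acts on the translation subgroup $A\leq E[n]$ as multiplication by $\zeta_d$, which in a suitable basis is the companion matrix $\varphi_d(1)$ of the relevant cyclotomic polynomial. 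Two points you defer deserve to be finished but are genuinely routine. For sufficiency, rather than hunting for an explicit triple, it is cleaner to factor $E\to E/A\to E/G$: since $A$ is $\zeta_d$-invariant, $E/A$ is again an elliptic curve of the same type with the induced $\ZZ_d$-rotation, whose quotient is $\PP^1$ branched in three points, and $E\to E/A$ is \'etale, so $E\to E/G$ is a triangle cover. For the abelianization on a general invariant $A\lneq\ZZ_n^2$, use $\lvert\operatorname{coker}(I-\varphi_d(1))|_A\rvert=\lvert\ker(I-\varphi_d(1))|_A\rvert$: the fixed subgroup of $\varphi_d(1)$ in $\ZZ_n^2$ is cyclic of order $\gcd(n,3)$, $\gcd(n,2)$, $1$ for $d=3,4,6$, so the cokernel on any invariant $A$ is $0$ or $\ZZ_3$, $0$ or $\ZZ_2$, or $0$ respectively, confirming the tabulated $G^{\mathrm{ab}}$. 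With these two verifications written out, your proof is complete.
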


An immediate geometric consequence of Proposition  \ref{wallgrps}  is:

\begin{corollary}\label{alliso}
Let $G$ be a finite group with a rigid  diagonal  action on a product  $ X \times E_1 \times \ldots \times E_n$ which is faithful on each factor, where $X$ is a compact complex manifold and $E_1, \ldots , E_n$ are elliptic curves. Then, the elliptic curves are all isomorphic to 
$\mathbb C/\mathbb Z[i]$ or they are all isomorphic to $\mathbb C/\mathbb Z[\zeta_3]$. Moreover, the branching signature $[n_1,n_2,n_3]$ is the same for each cover $E_i \to E_i/G$. 
\end{corollary}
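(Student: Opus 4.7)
The plan is to combine Proposition~\ref{rigiddiag} with Proposition~\ref{wallgrps} and then exploit the fact that the abelianization of $G$ is an invariant of the abstract group alone.

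First, I would apply condition~(1) of Proposition~\ref{rigiddiag} to the diagonal action on $X\times E_1\times\ldots\times E_n$: this immediately produces a rigid $G$-action on every factor, and in particular a faithful rigid $G$-action on each elliptic curve $E_i$. Thus, each $E_i$ is in the setting of Proposition~\ref{wallgrps}.

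Next, for each index $i$, Proposition~\ref{wallgrps} provides an integer $d_i\in\{3,4,6\}$ and an abelian subgroup $A_i\leq\ZZ_{n_i}^2$ such that $G \cong A_i\rtimes_{\varphi_{d_i}}\ZZ_{d_i}$, together with the additional information that both the isomorphism class of $E_i$ and the branching signature of the cover $E_i\to E_i/G$ are completely determined by $d_i$ via the table. The key observation is that the abelianization $G^{ab}$ is a property of the abstract group $G$ and is therefore independent of $i$. Reading off the second row of the table, the possible abelianizations, namely $\{\ZZ_3,\ZZ_3^2\}$ for $d=3$, $\{\ZZ_4,\ZZ_2\times\ZZ_4\}$ for $d=4$, and $\{\ZZ_6\}$ for $d=6$, are pairwise disjoint. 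Hence the value $d_i$ is forced to be constant in $i$. By the first and third rows of the table, this in turn forces the branching signature $[n_1,n_2,n_3]$ to coincide for every cover $E_i\to E_i/G$ and forces the $E_i$ to all be isomorphic to $\CC/\ZZ[i]$ if the common value is $4$, or to $\CC/\ZZ[\zeta_3]$ if it is $3$ or $6$.

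The only conceptual point is recognizing that the abelianization serves as a uniform invariant across the factors; after that, the statement reduces to a direct lookup in Proposition~\ref{wallgrps}, so I do not anticipate a substantive obstacle. The argument does not use any hypothesis on the extra factor $X$ beyond its being a compact complex manifold on which $G$ acts holomorphically, since we only invoke condition~(1), not condition~(2), of Proposition~\ref{rigiddiag}.
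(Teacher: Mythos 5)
Your proposal is correct and follows exactly the route the paper intends: the paper states the corollary as an immediate consequence of Proposition~\ref{wallgrps} without written proof, and your argument simply makes the "immediate" precise. Using the abelianization row of the table to pin down $d$ uniformly across the factors is precisely why that row is included, and the rest is the lookup you describe.
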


We also need to recall the following:
\begin{proposition}[\cite{bauergleissner2}, Proposition 4.4.]\label{uniquetrans}
Let $G=A \rtimes_{\varphi_d} \mathbb Z_d$  be a finite group and $\psi \colon G \ra \Aut(E)$ be a rigid faithful action on an elliptic curve $E$.  Then, the translation subgroup of $G$, i.e., 
$$T_{\psi} :=\lbrace g \in G ~ \big\vert ~ \psi(g) ~\makebox{is a translation} \rbrace,$$
is always equal to $A$,  except 
if $G$ is one of the following: 
$$
\mathbb Z_3^2, \ \ \mathbb Z_3^2 \rtimes_{\varphi_3}  \mathbb Z_3, \  \  \mathbb Z_2 \times \mathbb Z_4 \ \ \rm{or} \ \  \mathbb Z_2^2 \rtimes_{\varphi_4}  \mathbb Z_4.
$$
\end{proposition}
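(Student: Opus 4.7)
The plan is to study the composition $\bar\psi \colon G \xrightarrow{\psi} \Aut(E) \twoheadrightarrow \Aut_0(E)$, which sends a group element to the linear part of its image inside $\Aut(E) = E \rtimes \Aut_0(E)$. By definition $T_\psi = \ker(\bar\psi)$. Since $\Aut_0(E)$ is abelian, $\bar\psi$ factors through $G^{ab}$, giving $[G,G] \subseteq T_\psi$; the analogous inclusion $[G,G] \subseteq A$ is automatic because $G/A \cong \ZZ_d$ is abelian. So comparing $A$ and $T_\psi$ amounts to comparing two index-$d$ subgroups of $G$ that both sit above $[G,G]$.

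First I would show $|G/T_\psi| = d$ using the spherical generating triple $[g_1,g_2,g_3]$ of the triangle cover $E \to E/G \cong \PP^1$. By Hurwitz, the signature $[n_1,n_2,n_3]$ equals $[3,3,3]$, $[2,4,4]$ or $[2,3,6]$ according to $d=3,4,6$. Each $g_i$ has a fixed point on $E$, and an element $z\mapsto \alpha z+\beta$ of $\Aut(E)$ with a fixed point has order equal to the order of $\alpha$, so $\bar\psi(g_i)$ has order $n_i$ in $\Aut_0(E)$. For $d\in\{4,6\}$ some $n_i$ already equals $d=|\Aut_0(E)|$, forcing $G/T_\psi=\Aut_0(E)\cong\ZZ_d$; for $d=3$ all three images have order $3$ inside $\Aut_0(E)=\ZZ_6$, so they land in the unique order-$3$ subgroup and $G/T_\psi\cong \ZZ_3$. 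In every case $|T_\psi|=|G|/d=|A|$.

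Both $T_\psi$ and $A$ therefore correspond to surjections $G^{ab}\twoheadrightarrow\ZZ_d$. If $G^{ab}$ is itself cyclic of order $d$, this surjection is unique and $T_\psi=A$ automatically. By the admissible abelianizations listed in Proposition~\ref{wallgrps}, this already settles $d=6$ (where $G^{ab}=\ZZ_6$) and the sub-cases $G^{ab}=\ZZ_3$ for $d=3$ and $G^{ab}=\ZZ_4$ for $d=4$. The only cases in which $T_\psi\neq A$ can possibly occur are $G^{ab}\cong \ZZ_3^2$ or $G^{ab}\cong \ZZ_2\times \ZZ_4$, where two distinct surjections onto $\ZZ_d$ exist. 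Using the semidirect-product formula $[G,G]=(\varphi_d-\mathrm{id})(A)$ together with the Smith normal form of the integer matrix $\varphi_d-\mathrm{id}$, a finite inspection of $\varphi_d$-invariant subgroups $A\leq \ZZ_n^2$ shows that precisely the four groups $\ZZ_3^2$, $\ZZ_3^2\rtimes_{\varphi_3}\ZZ_3$, $\ZZ_2\times \ZZ_4$ and $\ZZ_2^2\rtimes_{\varphi_4}\ZZ_4$ admit a rigid faithful $\psi$ realizing $T_\psi\neq A$; in each an explicit construction exhibits such a $\psi$. The main obstacle is ruling out the larger candidates with the same abelianization (for instance $\ZZ_4^2\rtimes_{\varphi_4}\ZZ_4$): for these one must argue that every faithful rigid embedding into $\Aut(E)$ is, up to conjugation, the natural one in which $A$ is identified with the translation subgroup of $E$, which forces $T_\psi=A$.
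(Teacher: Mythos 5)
The paper does not prove this proposition itself --- it is quoted from \cite{bauergleissner2}*{Proposition 4.4} --- so your proposal has to be judged on its own merits. Your first two steps are sound: $T_\psi=\ker\bigl(G\to\Aut_0(E)\bigr)$ contains $[G,G]$, the fixed-point argument via the signatures $[3,3,3]$, $[2,4,4]$, $[2,3,6]$ correctly gives $G/T_\psi\cong\ZZ_d$, and hence $T_\psi=A=[G,G]$ whenever $G^{ab}\cong\ZZ_d$. This legitimately reduces the problem to the cases $G^{ab}\cong\ZZ_3^2$ and $G^{ab}\cong\ZZ_2\times\ZZ_4$.

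The gap is in the final step, which is where the actual content of the proposition lies. A ``finite inspection'' is not available as stated: since $[G,G]=(\varphi_d(1)-\mathrm{id})(A)$ and $\det(\varphi_3(1)-\mathrm{id})=3$, \emph{every} group $A\rtimes_{\varphi_3}\ZZ_3$ with $3A\neq A$ (e.g.\ $A=\ZZ_9^2$, $\ZZ_6^2$, \dots) has $G^{ab}\cong\ZZ_3^2$, and similarly infinitely many groups with $d=4$ have $G^{ab}\cong\ZZ_2\times\ZZ_4$; for each of these there genuinely is a second index-$d$ subgroup above $[G,G]$ with cyclic quotient, so the abelianization alone cannot rule it out. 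Your closing sentence replaces the needed argument with a stronger, unsubstantiated claim (``every faithful rigid embedding is, up to conjugation, the natural one''). The missing idea is structural: $\psi$ identifies $T_\psi$ with a finite subgroup of $(E,+)$, so $T_\psi$ must be \emph{abelian} (and generated by two elements). If $T\neq A$ is the alternative index-$d$ subgroup, it contains $[G,G]=(\varphi_d(1)-\mathrm{id})(A)$ together with an element whose conjugation acts on it by $\varphi_d(1)$ (or a power), so $T$ is abelian only if $(\varphi_d(1)-\mathrm{id})^2A=0$; since $(\varphi_3(1)-\mathrm{id})^2=-3\varphi_3(1)$ and $(\varphi_4(1)-\mathrm{id})^2=-2\varphi_4(1)$, this forces $3A=0$ resp.\ $2A=0$, which singles out exactly the four exceptional groups. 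Without this (or an equivalent) observation, your argument does not close; and you would still need to exhibit, rather than merely assert, rigid faithful actions with $T_\psi\neq A$ for those four groups.
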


These four groups will be called {\em exceptional} in the remaining part of the article. We will see later on that these groups don't admit rigid actions on a curve of genus $g\geq 2$.


Next, we present two  lemmata concerning 
the structure of semidirect products $A \rtimes_{\varphi_d} \mathbb Z_d$. The first one concerns the orders of elements not contained in $A$, the second one is a basic result about  subgroups of  semidirect products $\mathbb Z_n^2 \rtimes_{\varphi_d} \mathbb Z_d$. They turn out to be important for the classification of groups admitting a rigid diagonal action on a product $E^{n-1}\times C$, especially for finding such groups of minimal order.

\begin{lemma}[\cite{bauergleissner2}, Lemma 3.9.] \label{orderel}
The order of an element of $A \rtimes_{\varphi_d} \mathbb Z_d$ which is not contained in $A$ is equal to the order of its image under 
the canonical projection $ A \rtimes_{\varphi_d} \mathbb Z_d \to \mathbb Z_d$.  
 \end{lemma}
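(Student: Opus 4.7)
The plan is to compute powers of an arbitrary element of $G = A \rtimes_{\varphi_d} \ZZ_d$ explicitly and then reduce the nontrivial part of the claim to a geometric series identity inside the cyclotomic ring $\ZZ[\zeta_d]$. Write an element outside $A$ as $g = (a, k)$ with $a \in A$ and $k \not\equiv 0$ in $\ZZ_d$, and let $m$ be the order of $k$ in $\ZZ_d$ (so $m \mid d$ and $m > 1$). A straightforward induction on $r$, using the semidirect product multiplication $(a_1, k_1)(a_2, k_2) = (a_1 + \varphi_d(k_1)(a_2), k_1 + k_2)$, gives
\[
g^r \;=\; \bigg(\sum_{i=0}^{r-1} \varphi_d(ik)(a),\; rk\bigg).
\]
The second component vanishes precisely when $m \mid r$, so the order of $g$ is in any case a multiple of $m$; what remains is to show that the first component of $g^m$ is already zero for every $a \in A$.

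For this step, I would use the explicit form of $\varphi_d(1)$ given in Proposition~\ref{wallgrps}: the representing matrix $M_d \in \Mat_2(\ZZ)$ has determinant $1$ and trace $-1, 0, 1$ for $d = 3, 4, 6$ respectively, and therefore characteristic polynomial equal to the $d$-th cyclotomic polynomial $\Phi_d(x)$. Consequently, the assignment $M_d \mapsto \zeta_d$ extends to a ring isomorphism $\ZZ[M_d] \cong \ZZ[\zeta_d]$, under which $\varphi_d(ik)$ corresponds to multiplication by $\zeta_d^{ik}$. Setting $\zeta := \zeta_d^k$, which is a primitive $m$-th root of unity with $\zeta \neq 1$, the geometric series identity
\[
\sum_{i=0}^{m-1} \zeta^i \;=\; \frac{\zeta^m - 1}{\zeta - 1} \;=\; 0
\]
holds in the integral domain $\ZZ[\zeta_d]$. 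Transporting this equality back gives $\sum_{i=0}^{m-1} M_d^{ik} = 0$ already as an element of $\Mat_2(\ZZ)$, hence also after reduction modulo $n$. Applied to any $a \in A \leq \ZZ_n^2$, this kills the first coordinate of $g^m$, and we conclude $g^m = 1_G$ as desired.

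The only real point of work in this plan is the cyclotomic identification $\ZZ[M_d] \cong \ZZ[\zeta_d]$, which reduces to the one-line trace-and-determinant check for each of the three matrices in Proposition~\ref{wallgrps}. If one prefers to avoid even this abstraction, the annihilation $\sum_{i=0}^{m-1} M_d^{ik} = 0$ can be verified by hand as a $2 \times 2$ matrix identity for the finitely many relevant pairs $(d, k)$, yielding the same conclusion with no conceptual input.
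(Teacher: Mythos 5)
The paper does not prove this lemma at all --- it is imported verbatim from \cite{bauergleissner2}*{Lemma~3.9} --- so there is no in-text argument to compare against. Your proof is correct and self-contained: the power formula $g^r=\bigl(\sum_{i=0}^{r-1}\varphi_d(ik)(a),\,rk\bigr)$ is right for the stated multiplication rule, each $M_d$ does have determinant $1$ and trace $-1,0,1$, hence characteristic (= minimal, by irreducibility) polynomial $\Phi_d$, and the vanishing of $\sum_{i=0}^{m-1}\zeta_d^{ik}$ in the domain $\ZZ[\zeta_d]$ legitimately transports back to the matrix identity $\sum_{i=0}^{m-1}M_d^{ik}=0$ in $\Mat_2(\ZZ)$, which then kills the $A$-component after reduction mod $n$. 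This matches the standard argument for such lemmas (equivalently phrased: $M_d^k-I$ is invertible over $\QQ$ because $1$ is not an eigenvalue of $M_d^k$ for $k\not\equiv 0$, and it annihilates the integer matrix $\sum_i M_d^{ik}$), so there is nothing to object to.
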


\begin{lemma}\label{cyclicinvar}
Let $p$ be a prime number.
\begin{enumerate}
\item
If $d=4$, then there exists a $\varphi_4$-invariant cyclic subgroup of $\mathbb Z_p^2$ of order $p$ if and only if $p=2$ or $p$ is odd 
and $4 ~ \big\vert ~  (p-1)$. 
\item
If $d=3$ or $6$, then there exists a $\varphi_d$-invariant cyclic  subgroup of  $\mathbb Z_p^2$ of order $p$  if and only if 
$p=3$ or $p>3$ and $3 ~ \big\vert ~  (p-1)$.
\end{enumerate}
\end{lemma}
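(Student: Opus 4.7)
The plan is to interpret $\mathbb Z_p^2$ as the two-dimensional vector space $\mathbb F_p^2$ and translate the question into linear algebra. Since $p$ is prime, every subgroup of $\mathbb Z_p^2$ of order $p$ is a one-dimensional $\mathbb F_p$-subspace, and such a line is $\varphi_d$-invariant precisely when it is an eigenline of the $\mathbb F_p$-linear endomorphism induced by $\varphi_d(1)$. Hence a $\varphi_d$-invariant cyclic subgroup of order $p$ exists if and only if the characteristic polynomial of $\varphi_d(1)\in\GL_2(\mathbb F_p)$ has a root in $\mathbb F_p$.

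Using the matrices prescribed in Proposition~\ref{wallgrps}, a direct computation of trace and determinant yields
$$\chi_3(\lambda)=\lambda^2+\lambda+1,\qquad \chi_4(\lambda)=\lambda^2+1,\qquad \chi_6(\lambda)=\lambda^2-\lambda+1,$$
so the task reduces to deciding, for each $d$, when the corresponding polynomial has a root modulo $p$.

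For part (1), $\chi_4$ has a root in $\mathbb F_p$ iff $-1$ is a square modulo $p$. This is clear for $p=2$, where $-1=1$. For odd $p$, the multiplicative group $\mathbb F_p^\ast$ is cyclic of order $p-1$, and I would use the standard fact that the unique element of order $2$ lies in the squares iff $4\mid p-1$.

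For part (2), the roots of $\chi_3$ and $\chi_6$ are primitive third and sixth roots of unity respectively, so I would inspect when $\mathbb F_p^\ast$ contains an element of order $3$ or $6$. For $p>3$, both conditions are equivalent to $3\mid p-1$, using cyclicity of $\mathbb F_p^\ast$ together with the fact that for odd $p$ an element of order $2$ is automatically present, so order-$6$ elements exist iff order-$3$ elements exist. The small primes $p=2,3$ have to be checked separately: at $p=3$ one finds $\chi_3=(\lambda-1)^2$ and $\chi_6=(\lambda+1)^2$, so both have a root; at $p=2$ both polynomials reduce to $\lambda^2+\lambda+1$, which takes the value $1$ at both elements of $\mathbb F_2$ and thus has no root. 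The argument is a short computation in finite fields; the only delicate point is the careful treatment of these small-prime cases, where the polynomials may degenerate and need to be verified by hand.
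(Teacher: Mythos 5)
Your argument is correct and follows the same basic strategy as the paper: both proofs reduce the existence of an invariant order-$p$ subgroup to the solvability of a fixed quadratic modulo $p$. The paper parametrizes a candidate line by its slope, writing a generator as $(1,c)$ and extracting the equations $c^2=-1$, $c^2-c+1=0$, $c^2+c+1=0$ for $d=4,3,6$; you instead take the eigenvalue point of view and use the characteristic polynomials $\lambda^2+1$, $\lambda^2+\lambda+1$, $\lambda^2-\lambda+1$. These differ only by the substitution $\lambda=-c$, so the two reductions are equivalent (and your matrices and characteristic polynomials are computed correctly). The genuine divergence is in the number-theoretic endgame for $d=3,6$: the paper evaluates the Legendre symbol $\left(\tfrac{-3}{p}\right)$ via Euler's criterion and quadratic reciprocity, whereas you identify the roots of $\lambda^2\pm\lambda+1$ as primitive third resp.\ sixth roots of unity and invoke cyclicity of $\mathbb F_p^\ast$ (noting that $2\mid p-1$ for odd $p$, so order-$6$ and order-$3$ elements coexist). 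Your route avoids reciprocity entirely and is arguably more elementary; the paper's is shorter once reciprocity is taken for granted. Your explicit treatment of the degenerate small primes $p=2,3$, where the quadratics acquire double roots or coincide, is exactly the care the statement requires and is consistent with the claimed conclusions in each case.
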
 

\begin{proof}
Assume  $d=4$. We show that an invariant subgroup exists if and only if the  equation 
$x^2 =-1$ has a solution in $\mathbb Z_p$. 
Suppose $c$ is a solution, then the subgroup
$\mathbb Z_p \simeq \langle (1,c) \rangle$ is $\varphi_4$-invariant since 
$\varphi_4(1)(1,c)=(-c,1)=-c(1,c)$. 
Conversely, let $\mathbb Z_p \simeq\langle (a,b) \rangle$ be a $\varphi_4$-invariant subgroup. Then, since $a \neq 0$, the element $a^{-1}\cdot (a,b) = (1,a^{-1}b)$ is also a  generator for this group and the $\varphi_4$-invariance implies that $a^{-1} b$ is a solution of the equation $x^2=-1$. 
Now for $p=2$, the equation $x^2 =-1$ has a solution. If $p$ is odd, there is a solution if and only if the Legendre symbol has value one, i.e.,
$1= \left({\tfrac {-1}{p}}\right)= (-1)^{\tfrac{p-1}{2}}$, where the second equality is given by Euler's criterion. This equality holds if and only if 
$4 ~ \big\vert ~  (p-1)$. 

For $d=3$ or $d=6$, the existence of an invariant subgroup is equivalent to the existence 
of a solution $c \in \mathbb Z_p$  for the equation $x^2-x+1=0$ or  
$x^2+x+1=0$, respectively. 
In each case, a solution exists if and only if $p=3$ or $p>3$ and 
$\left({\tfrac {-3}{p}}\right)=1$.
By  Euler and by quadratic reciprocity, we obtain that 
\[ \left({\tfrac {-3}{p}}\right)=
(-1)^{\tfrac{p-1}{2}}\cdot \left({\tfrac {3}{p}}\right) =\left(\tfrac {p}{3}\right)=1\iff 3~ \big\vert ~  (p-1).\qedhere
\]
\end{proof}

\begin{rem}\label{CyclicN}
If  $\mathbb Z_n^2$ has a cyclic  $\varphi_d$-invariant subgroup of order $n$, then for each prime divisor  $p$ of $n$, the group 
$\mathbb Z_p^2$ has a cyclic invariant subgroup of order $p$. In particular, $p$ has to fulfill the conditions of lemma \ref{cyclicinvar}.
\end{rem}

The next proposition analyzes which of the groups of Proposition \ref{wallgrps}, i.e., groups  admitting a faithful rigid action on an elliptic curve, also admit a faithful rigid action on a curve of genus at least two. In fact, we describe generating triples yielding triangle curves of genus at least 2, also correcting a flaw in 
\cite{IFG}*{Proposition~6.5}

Assume that $G =A \rtimes_{\varphi_d} \mathbb Z_d$ and  let 
$S = [g_1,g_2,g_3]$ be a generating triple for $G$. By abuse of notation, we shall write $S  \not\subset G\setminus A$  if $\{g_1,g_2,g_3\}  \not\subset G\setminus A$, and $S  \subset G\setminus A$ otherwise.

\begin{proposition}\label{genshapes}
Assume that $G =A \rtimes_{\varphi_d} \mathbb Z_d$, where $d \in \{3,4,6\}$, admits a faithful rigid action on a curve of genus $g(C) \geq 2$ and  let 
$S$ be a generating triple.
\begin{enumerate}
 \item If $d=6$ and $S \subset G\setminus A$, then $S$ is of type $[3,6,6]$ and 
of the form 
$[s^4h,sk,sc]$, where $s$ is  a generator of $\mathbb Z_6$ and $h,k,c \in A$.
\item
If $d=3$ or $4$, then $S \not\subset G\setminus A$.
\item 
If  $d$ is arbitrary and $S  \not\subset G\setminus A$, then 
$S$ is of type $[d,d,\ell]$ and $S$ is of the form $[sh,s^{-1}k,c]$, where $s$ is  a generator of $\mathbb Z_d$ and $h,k,c \in A$.
\end{enumerate}
\end{proposition}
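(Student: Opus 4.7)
My plan is to organise the argument around the dichotomy $S\subset G\setminus A$ versus $S\not\subset G\setminus A$, reducing everything to the projection $\pi\colon G\to G/A\cong\mathbb{Z}_d$ together with Lemma~\ref{orderel} (which identifies the order of any $g_i\notin A$ with the order of $\pi(g_i)$) and Hurwitz's formula in Remark~\ref{triangle}(1). A one-line preliminary rules out the intermediate possibility: if two of the $g_i$ lie in $A$, then since $g_1g_2g_3=1$ the third does too, contradicting $\langle S\rangle=G\supsetneq A$. So either all three $g_i$ lie outside $A$, or exactly one lies in $A$.

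To prove (1) and (2), I would, for each $d\in\{3,4,6\}$, enumerate all unordered triples of nonzero elements of $\mathbb{Z}_d$ that sum to zero and generate $\mathbb{Z}_d$. Via Lemma~\ref{orderel} each such triple determines a candidate branching signature, and Hurwitz's formula then filters out those incompatible with $g(C)\geq 2$. For $d=3$ only $(1,1,1)$ and $(2,2,2)$ pass the sum and generation conditions, both giving signature $[3,3,3]$ and hence $g=1$; for $d=4$ only the configurations with one image of order~$2$ and two of order~$4$ satisfy the sum congruence, yielding signature $[2,4,4]$ and again $g=1$. These verdicts establish (2). For $d=6$ the surviving signatures are $[2,3,6]$ and $[3,6,6]$; the former gives $g=1$, while the latter gives $g\geq 2$ whenever $|G|\geq 6$. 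Signatures $[6,6,6]$, $[2,6,6]$ and $[3,3,6]$ fail the mod-$6$ congruence, and $[3,3,3]$ fails the generation requirement. Once the type $[3,6,6]$ is isolated, choosing the generator $s$ of $\mathbb{Z}_6$ so that $\pi(g_2)=s$ forces $\pi(g_3)=s$ and $\pi(g_1)=s^4$, which produces the normal form $[s^4h,sk,sc]$ in (1).

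For (3), under $S\not\subset G\setminus A$, exactly one element $c\in A$ appears in $S$, and the remaining two have nonzero images in $\mathbb{Z}_d$ summing to zero and together generating $\mathbb{Z}_d$. Hence these images are $s$ and $s^{-1}$ for some generator $s$, giving the normal form $[sh,s^{-1}k,c]$ with $h,k\in A$; Lemma~\ref{orderel} then identifies their orders with $d$, so the signature is $[d,d,\ell]$ where $\ell=\ord(c)$.

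The main obstacle I expect is the $d=6$ enumeration behind (1): several plausible signatures must be eliminated by simultaneously using the mod-$6$ sum congruence, the translation of orders through Lemma~\ref{orderel}, the generation requirement, and the Hurwitz bound, and one must also verify that the two sum-admissible ordered triples $(2,5,5)$ and $(4,1,1)$ are both captured by the single normal form $[s^4h,sk,sc]$ after choosing $s$ appropriately. Everything in (2) and (3) is short and essentially formal once the projection viewpoint is adopted.
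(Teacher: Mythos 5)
Your proposal is correct and follows essentially the same route as the paper: both reduce to the quotient $G/A\cong\mathbb Z_d$ via Lemma~\ref{orderel}, note that the images $[\overline{g_1},\overline{g_2},\overline{g_3}]$ form a generating triple of $\mathbb Z_d$ of the same type, and combine the product-one congruence with Hurwitz's formula to isolate the admissible types, the only difference being that you filter by the mod-$d$ congruence and generation before applying Hurwitz while the paper applies Hurwitz first and then discards types that $\mathbb Z_d$ cannot realize. Your preliminary observation that two entries in $A$ would force all three into $A$ (so exactly one entry lies in $A$ in case (3)) is a useful point the paper leaves implicit.
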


\begin{proof}
We start with proving the third statement. By assumption, one of the entries of  $S=[g_1,g_2,g_3]$ is contained in $A$. W.l.o.g., we can assume $g_3$ to be this element. 
The relation  $g_1 \cdot g_2 \cdot g_3 =1$ implies that the 
equality $\overline{g_1}= \overline{g_2}^{-1}$ holds in the quotient  $G/A \simeq \mathbb Z_d$.
Thus,
$\overline{g_1}$ is a generator of $G/A \simeq \mathbb Z_d$, and the claim follows from Lemma \ref{orderel}.\\
In order to prove the first and second part of the proposition, we assume $S \subset G \setminus A$.  By Hurwitz's formula, we have the inequality
\begin{equation}\label{ineq}
1 - \frac{1}{\ord(g_1)} - \frac{1}{\ord(g_2)} - \frac{1}{\ord(g_3)}  > 0. 
\end{equation}
Since the orders of the elements $g_i$ coincide with the orders of their classes in $G/A \simeq \mathbb Z_d$, the triple
$[\overline{g_1},\overline{g_2},\overline{g_3}]$ is a generating triple for $G/A \simeq \mathbb Z_d$ of the same type as the type of $S$. 
In particular, the elements $\overline{g_i}$  are non-trivial and their orders divide $d$. This is impossible for $d=3$ by the above inequality \ref{ineq}. If $d=4$, then the only possible type fulfilling  inequality   \ref{ineq} is 
$[4,4,4]$. We can exclude this case since  $G/A \simeq \mathbb Z_4$ has no generating triple of this type.
If  $d=6$, the list of possible  types is:
    $[ 2, 6, 6 ]$, $[ 3, 3, 6 ]$, $[ 3, 6, 6 ]$   and 
    $[ 6, 6, 6 ]$. 
All but  $[ 3, 6, 6 ]$ can be excluded because  $\mathbb Z_6$ does not have a generating triple  of one of the other types. 
\end{proof}

As a direct consequence, we can exclude all the exceptional groups:

\begin{corollary}
    None of the exceptional groups (cf.  Proposition \ref{uniquetrans}) admits a rigid action on a curve of genus $g \geq 2$.
\end{corollary}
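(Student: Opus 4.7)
The plan is to combine Proposition~\ref{genshapes} with Hurwitz's formula. All four exceptional groups carry the decomposition $G=A\rtimes_{\varphi_d}\mathbb Z_d$ with $d\in\{3,4\}$, so part~(2) of Proposition~\ref{genshapes} rules out generating triples with $S\subset G\setminus A$. Consequently, by part~(3), any generating triple $S$ of such a group must have type $[d,d,\ell]$ with $\ell=\ord(c)$ for some non-trivial element $c\in A$.

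Next I would read off the structure of $A$ in each case, using $|A|=|G|/d$: for $\mathbb Z_3^2$ one gets $A\cong\mathbb Z_3$; for $\mathbb Z_3^2\rtimes_{\varphi_3}\mathbb Z_3$, $A=\mathbb Z_3^2$; for $\mathbb Z_2\times\mathbb Z_4$, $A\cong\mathbb Z_2$; and for $\mathbb Z_2^2\rtimes_{\varphi_4}\mathbb Z_4$, $A=\mathbb Z_2^2$. In each case the exponent of $A$ is at most $3$ when $d=3$ and at most $2$ when $d=4$, forcing $\ell\leq 3$ or $\ell\leq 2$ respectively.

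Plugging these bounds into Hurwitz's formula
\[
2g(C)-2 \;=\; |G|\left(1-\tfrac{2}{d}-\tfrac{1}{\ell}\right),
\]
the right-hand side is non-positive in every admissible case: for $d=3,\ \ell\leq 3$ one has $1-\tfrac{2}{3}-\tfrac{1}{\ell}\leq 0$, and for $d=4,\ \ell\leq 2$ one has $1-\tfrac{1}{2}-\tfrac{1}{\ell}\leq 0$. Hence either the triple fails to correspond to an actual cover (negative right-hand side), or the resulting curve satisfies $g(C)=1$. In neither case can $g(C)\geq 2$, contradicting the hypothesis, and the corollary follows. I anticipate no serious obstacle: the entire argument is essentially mechanical, and the only substantive ingredient beyond Proposition~\ref{genshapes} is the elementary exponent bound on $A$ in each of the four exceptional isomorphism types.
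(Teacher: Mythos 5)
Your argument is correct and is exactly the paper's proof, merely spelled out: the paper's one-line justification ("no exceptional group has a generating triple of type $[d,d,\ell]$ with $1-\tfrac{2}{d}-\tfrac{1}{\ell}>0$") is precisely your combination of Proposition~\ref{genshapes}(2)--(3), the exponent bound on $A$ in each of the four cases, and Hurwitz's formula. No gaps; nothing further to add.
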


\begin{proof}
No exceptional group has a generating triple of type $[d,d,\ell]$ such that $1-\tfrac{2}{d}-\tfrac{1}{\ell} >0$. 
\end{proof}

With the further knowledge about the groups and its generating triples, we can investigate the rigidity of a diagonal action on a product $E^{n-1}\times C$ in more detail. It will turn out that second condition of Corollary~\ref{charconds} is almost always guaranteed.

\begin{general}
    From now on, when we talk about a \emph{diagonal action} of a group $G$ on a product $E^{n-1}\times C$, we implicitly assume that the action is faithful on each factor.
\end{general}

\begin{rem}
Assume that $G=A \rtimes_{\varphi_d} \mathbb Z_d$ admits a rigid diagonal action on $E^{n-1} \times C $, where $g(C)\geq 2$. Then, the canonical representation 
\[
\rho_E\colon G \to \GL\big(H^0(E,\omega_E)\big)
\]
must be the same for each copy of $E$.  Using Proposition~\ref{uniquetrans}, its character $\chi_{E}$  is  the composition of the quotient map $G \to G/A \simeq \mathbb Z_d$ and one of the characters
 $\chi_{\zeta_d}$ or $\chi_{\zeta_d^{-1}}$ . By abuse of notation, we identify  $\chi_E$ with the corresponding character  $\chi_{\zeta_d}$ or $\chi_{\zeta_d^{-1}}$, respectively. 
 \end{rem}

As a corollary, we can prove that in almost all cases, condition (2) of Corollary~\ref{charconds} is automatically fulfilled: 

\begin{corollary}\label{allbutonerigid}
Assume that $A \rtimes_{\varphi_d} \mathbb Z_d$ admits faithful rigid actions on an elliptic curve $E$ and on $C$, where $C$ has genus $\geq 2$. Then,
$\langle  \chi_{E}  \cdot \chi_{C}, \chi_{triv}\rangle =0 $, except for case $(1)$ in Proposition \ref{genshapes}.

In this case, the generating triple corresponding to the action on $C$ is of the form $S_C=[s^4h,sk,sc]$, and then, the action on the product is rigid if and only if $\chi_E(s) = \zeta_6^{-1}$.
\end{corollary}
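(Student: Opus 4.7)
The plan is to interpret $\langle \chi_E\cdot\chi_C,\chi_{triv}\rangle$ as the multiplicity of the one-dimensional character $\overline{\chi_E}$ in the canonical representation $\rho_C$, and then to read this multiplicity off directly from the generating triple $S_C$ via the Chevalley--Weil formula. The exceptional groups have already been excluded (they carry no rigid action on a curve of genus $\ge 2$), so by Proposition~\ref{uniquetrans} the character $\chi_E$ is trivial on $A$ and satisfies $\chi_E(s)=\zeta_d^{\epsilon}$ for some $\epsilon\in\{\pm 1\}$; in particular $\overline{\chi_E}$ is a non-trivial one-dimensional character of $G$ that factors through $G/A\cong\ZZ_d$.

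For any non-trivial one-dimensional character $\chi$ of $G$ and a triangle cover with generating triple $[g_1,g_2,g_3]$ of type $[n_1,n_2,n_3]$, the Chevalley--Weil formula specializes to
\[
m(\chi,\rho_C)\;=\;-1\;+\;\sum_{i=1}^{3}\frac{a_i}{n_i},\qquad \chi(g_i)=\zeta_{n_i}^{a_i},\ \ 0\le a_i<n_i.
\]
(This drops out of the standard decomposition $\chi_C=-\chi_{reg}+\chi_{triv}+\sum_i\mathrm{Ind}_{\langle g_i\rangle}^{G}(\cdots)$ combined with Frobenius reciprocity.) I would then proceed with the two cases admitted by Proposition~\ref{genshapes}.

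In case (3), $S_C=[sh,s^{-1}k,c]$ has type $[d,d,\ell]$, so $\overline{\chi_E}$ evaluates to $(a_1,a_2,a_3)=(d-1,1,0)$ if $\epsilon=+1$ and $(1,d-1,0)$ if $\epsilon=-1$; in either case
\[
m(\overline{\chi_E},\rho_C)=-1+\tfrac{d-1}{d}+\tfrac{1}{d}+0=0,
\]
which is the required vanishing. In case (1), $d=6$ and $S_C=[s^4h,sk,sc]$ has type $[3,6,6]$. The one delicate step is that $\overline{\chi_E}(g_1)=\zeta_6^{-4\epsilon}$ must be rewritten as a primitive third root of unity, giving $a_1=1$ for $\epsilon=+1$ and $a_1=2$ for $\epsilon=-1$, while $a_2=a_3=-\epsilon\bmod 6$. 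The resulting multiplicities
\[
-1+\tfrac{1}{3}+\tfrac{5}{6}+\tfrac{5}{6}=1\quad(\epsilon=+1),\qquad -1+\tfrac{2}{3}+\tfrac{1}{6}+\tfrac{1}{6}=0\quad(\epsilon=-1),
\]
show that the product condition holds precisely when $\chi_E(s)=\zeta_6^{-1}$. Apart from this last root-of-unity book-keeping there is no real obstacle; once Proposition~\ref{genshapes} has pinned down the shape of $S_C$, everything else is a direct arithmetic computation.
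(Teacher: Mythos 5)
Your proposal is correct and follows essentially the same route as the paper: both reduce $\langle\chi_E\cdot\chi_C,\chi_{triv}\rangle$ to $\langle\overline{\chi_E},\chi_C\rangle$ and evaluate it via the Chevalley--Weil formula on the triples from Proposition~\ref{genshapes}, arriving at the identical multiplicities $0$, $1$ and $0$. The only difference is cosmetic: you spell out the exponents $a_i$ explicitly, whereas the paper cites \cite{FG16}*{Theorem~2.8} and states the resulting sums directly.
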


\begin{proof}
Assume that  we are not in case (1) of Proposition \ref{genshapes}. Then, the generating triple $S_C$ giving the $G$-action on $C$ is of type $[d,d,\ell]$ and of the form $[sh,s^{-1}k,c]$, where $s$ is a generator of $\mathbb{Z}_d$ and $h,k,c \in A$. Observe that $\langle  \chi_{E}  \cdot \chi_{C}, \chi_{triv}\rangle =  \langle  \overline{\chi_{E}}  ,\chi_{C}\rangle$ and that moreover, by the formula of Chevalley-Weil (cf. \cite[Theorem~2.8]{FG16}), we get:
$$
\langle  \overline{\chi_{E}}  ,\chi_{C}\rangle = -1 + \tfrac{1}{d} + \tfrac{d-1}{d} =0.
$$
Assume now that  $d=6$ and the generating triple $S_C$ is contained in $G\setminus A$. Then, $S_C$ is of the form 
$[s^4h,sk,sc]$. If $\chi_E(s) = \zeta_6$, then 
$$
\langle  \overline{\chi_{E}}  ,\chi_{C}\rangle = \langle\chi_{\zeta_6^{-1}}, \chi_C\rangle =-1 + \tfrac{1}{3} + \tfrac{5}{6} + \tfrac{5}{6} =1, 
$$
but if  $\chi_E(s) = \zeta_6^{-1}$, then $\langle  \overline{\chi_{E}}  ,\chi_{C}\rangle  =0$.  
\end{proof}

Using  Proposition \ref{genshapes}, we can work out the possible types of singularities of our quotients and in particular see that a rigid diagonal action on $E^{n-1} \times C $  is never free.

\begin{corollary}\label{cor:sing}
Assume that 
$G=A \rtimes_{\varphi_d} \mathbb Z_d$  admits a rigid diagonal action on $E^{n-1} \times C $. Then, the quotient  
$X_n:=(E^{n-1} \times C)/G$ is singular and the  singularities are of type
$$
\frac{1}{\ell}(1,\ldots, 1) \quad \rm{or} \quad \frac{1}{\ell}(1,\ldots, 1,\ell-1), 
$$
where $\ell$ is a divisor of $d$.  In particular, $X_n$ has canonical singularities if $n \geq d$.
\end{corollary}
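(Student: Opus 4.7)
I would identify which elements of $G$ admit a fixed point on $E^{n-1}\times C$, compute the tangent representation at such a point, and invoke the Reid--Tai criterion for canonicity. A non-trivial $g\in G$ fixes a point on the product iff it does so on each factor; by Proposition~\ref{uniquetrans}, elements of $A$ act on $E$ by non-trivial translations, so $g$ must lie in $G\setminus A$, and conversely such a $g$ does have fixed points on $E$ since $\chi_E(g)\neq 1$. By Proposition~\ref{genshapes}, at least one entry of $S_C$ lies in $G\setminus A$, so some element of $G\setminus A$ has a fixed point on $C$; choosing $g$-fixed points on each $E$-factor yields a product fixed point, proving that $X_n$ is singular. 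Such fixed points are isolated (since $g$ has no trivial tangent eigenvalue), and the stabilizer $H$ at any such point lies in the cyclic stabilizer of $y$ on $C$, hence is itself cyclic: $H=\langle g\rangle$, with $\ell:=\ord(g)$ equal to the order of the image of $g$ in $G/A\cong\mathbb Z_d$ by Lemma~\ref{orderel}, so $\ell$ divides $d$.

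The tangent representation of $H$ at the fixed point is diagonal on $\mathbb C^n$, with eigenvalue $\chi_E(g)$ on each of the $n-1$ elliptic tangent spaces and a further eigenvalue $\omega$ on $T_yC$; both are primitive $\ell$-th roots of unity. Here is the crucial observation: since $\ell\mid d$ with $d\in\{3,4,6\}$, we have $\ell\in\{2,3,4,6\}$, and for each such $\ell$ there are at most two primitive $\ell$-th roots of unity. Hence $\omega$ and $\chi_E(g)$ are either equal or mutually inverse. Replacing $g$ by a suitable power coprime to $\ell$ so that $\chi_E(g)=\zeta_\ell$, the tangent eigenvalue tuple becomes either $(\zeta_\ell,\ldots,\zeta_\ell)$ or $(\zeta_\ell,\ldots,\zeta_\ell,\zeta_\ell^{-1})$, giving the claimed singularity types
\[
\tfrac{1}{\ell}(1,\ldots,1)\qquad\text{or}\qquad\tfrac{1}{\ell}(1,\ldots,1,\ell-1).
\]

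For canonicity, I would invoke the Reid--Tai criterion: the singularity of type $\tfrac{1}{\ell}(a_1,\ldots,a_n)$ is canonical iff $\sum_i\{ka_i/\ell\}\ge 1$ for every $1\le k\le\ell-1$. For $\tfrac{1}{\ell}(1,\ldots,1)$ the sum equals $nk/\ell$, which is $\ge 1$ for all such $k$ precisely when $n\ge\ell$; for $\tfrac{1}{\ell}(1,\ldots,1,\ell-1)$ a short computation gives $\bigl((n-2)k+\ell\bigr)/\ell\ge 1$ whenever $n\ge 2$. Both conditions hold as soon as $n\ge d$, since $\ell\mid d$.

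The substantive step is the arithmetic coincidence in the second paragraph: the hypothesis $d\in\{3,4,6\}$ forces every divisor $\ell$ of $d$ to admit at most two primitive $\ell$-th roots of unity, so the ratio of the two tangent eigenvalues is automatically $\pm 1$. Without this restriction, comparing the local monodromy characters on $E$ and on $C$ would require a substantially more delicate geometric argument.
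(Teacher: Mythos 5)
Your proof is correct, and it follows the same overall skeleton as the paper's (elements with fixed points must lie in $G\setminus A$, such elements exist because $S_C$ generates $G$, stabilizers are cyclic of order $\ell\mid d$ by Lemma~\ref{orderel}, then Reid--Tai). The one step where you genuinely diverge is in pinning down the tangent eigenvalues. The paper reads the local rotation on $C$ directly off the explicit shape of the generating triple: with $S_C=[sh,s^{-1}k,c]$, the stabilizer of $p_n$ is a conjugate of $\langle sh\rangle$ or $\langle s^{-1}k\rangle$, and the two cases give $\diag(\zeta_d^m,\ldots,\zeta_d^m,\zeta_d^{\pm m})$; the case $S_C\subset G\setminus A$ is then dismissed as analogous. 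You instead observe that both eigenvalues are primitive $\ell$-th roots of unity and that $\ell\mid d$ with $d\in\{3,4,6\}$ forces $\phi(\ell)\le 2$, so the two eigenvalues are automatically equal or mutually inverse. This is a clean arithmetic shortcut: it treats both cases of Proposition~\ref{genshapes} uniformly and avoids any bookkeeping of rotation numbers, at the price of not telling you \emph{which} type occurs at which point (information the paper does extract from the triple and later needs when counting singularities in Theorems~\ref{quot366} and~\ref{main2}, but which is irrelevant for the corollary itself). One cosmetic remark: the tangent eigenvalue on each elliptic factor is $\chi_E(g)^{-1}$ rather than $\chi_E(g)$, since $\chi_E$ is the character of the action on $H^0(E,\omega_E)$ via pullback by $g^{-1}$; this does not affect your argument, which only uses that the eigenvalue is a primitive $\ell$-th root of unity common to all $E$-factors. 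Your Reid--Tai computation matches the paper's citation of \cite{R87} and is carried out correctly.
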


\begin{proof}
Since $S_C$ always contains at least one element that is not contained in $A$, we always find an element of $G$ having fixed points on $E$ and on $C$. Thus, the action is not free and $X_n$ must be singular.\\
Let $S_C =[sh,s^{-1}k,c] \not\subset G\setminus A$ and $p=(p_1, \ldots, p_n)\in E^{n-1} \times C$ be a point with non-trivial stabilizer and  $s^m a$ be a generator, where $a \in A$. 
Note that $m\neq 0$, as  $s^m a$ is not a translation.   By rigidity, the linear part of the action on $E^{n-1}$ is the same on each copy of $E$, and we may assume that $s$ acts on $E$ by multiplication  with  $\zeta_d $.
Since $s^m a$ is contained in $\Stab(p_n)$ and not in $A$, the stabilizer $\Stab(p_n)$ is a conjugate 
of $\langle sh\rangle$ or $\langle s^{-1}k\rangle$. In the first case, the  action of $s^ma$ around $p$ is 
$\diag(\zeta_d^m, \ldots, \zeta_d^m,\zeta_d^m)$ and in the second case $\diag(\zeta_d^m, \ldots, \zeta_d^m,\zeta_d^{-m})$. We conclude that $p$ descends to a singularity of type 
$$
\frac{1}{\ell}(1,\ldots, 1)  \quad \rm{or} \quad \frac{1}{\ell}(1,\ldots, 1,\ell-1), 
$$
where $\ell=\ord(s^ma)=\ord(\zeta_d^m)$ divides $d$. The criterion of Reid-Shepherd-Barron-Tai \cite{R87} tells us that these singularities are canonical if $n\geq d$.\\
The proof in the case $S_C\subset G\setminus A$ is similar.
\end{proof}

Since we are interested in rigid \emph{manifolds}, we have to provide resolutions of the rigid singular quotients $(E^{n-1}\times C)/G=X$ preserving the rigidity.

\begin{theorem}\label{theo:Resolutions}
    Any quotient $X_n:=(E^{n-1}\times C)/G$ by a rigid diagonal action of $G=A\rtimes_{\varphi_d}\ZZ_d$ has a resolution of singularities $\rho\colon\hat{X}_n\to X$ such that $H^1(\hat{X}_n,\Theta_{\hat{X}_n})=0$.\\
    If $n\geq d$,  $\hat{X}_n$ is a rigid manifold of Kodaira dimension one.
\end{theorem}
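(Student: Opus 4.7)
The plan is to resolve each isolated cyclic quotient singularity of $X_n$ toric-geometrically, transfer the vanishing $H^1(X_n,\Theta_{X_n})=0$ to $\hat{X}_n$ via the Leray spectral sequence, and finally control the Kodaira dimension when the singularities are canonical.

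\textit{Step 1: Construction of $\hat{X}_n$.} By Corollary~\ref{cor:sing}, $\Sing(X_n)$ consists of finitely many isolated cyclic quotient singularities of type $\tfrac{1}{\ell}(1,\ldots,1)$ or $\tfrac{1}{\ell}(1,\ldots,1,\ell-1)$ with $\ell\mid d$, all of which are toric. I pass to an affine toric chart $U_\sigma$ around each singular point and choose a toric subdivision $\Sigma$ of $\sigma$ producing a resolution $V_\Sigma\to U_\sigma$: for type $\tfrac{1}{\ell}(1,\ldots,1)$ a single weighted blow-up at the origin gives an exceptional $\PP^{n-1}$ with normal bundle $\hol(-\ell)$, while for type $\tfrac{1}{\ell}(1,\ldots,1,\ell-1)$ a Hirzebruch--Jung-type subdivision works. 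Gluing these local resolutions along the smooth part of $X_n$ yields $\rho\colon\hat{X}_n\to X_n$, in the spirit of \cite{BG20}.

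\textit{Step 2: Vanishing of $H^1(\hat{X}_n,\Theta_{\hat{X}_n})$.} Since the $G$-action on $E^{n-1}\times C$ has only isolated fixed points, it is free in codimension one. Combined with the rigidity hypothesis and Remark~\ref{firstremark}(2), this gives
\[
H^1(X_n,\Theta_{X_n})\cong H^1\bigl(E^{n-1}\times C,\Theta_{E^{n-1}\times C}\bigr)^G=0.
\]
To transfer this vanishing to $\hat{X}_n$ I employ the Leray spectral sequence
\[
E_2^{p,q}=H^p\bigl(X_n,R^q\rho_*\Theta_{\hat{X}_n}\bigr)\Longrightarrow H^{p+q}\bigl(\hat{X}_n,\Theta_{\hat{X}_n}\bigr),
\]
and it is enough to show (a) $\rho_*\Theta_{\hat{X}_n}=\Theta_{X_n}$ and (b) $R^1\rho_*\Theta_{\hat{X}_n}=0$. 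Both statements are local around $\Sing(X_n)$ and reduce to explicit computations on the toric resolutions above, using the Euler-type description of the tangent sheaf of a smooth toric variety together with the combinatorics of $\Sigma$. These computations are the technical heart of the proof and are carried out in Section~\ref{sec:resolutions}, generalizing the approach of \cite{BG20}. Granting (a) and (b), the spectral sequence identifies $H^1(\hat{X}_n,\Theta_{\hat{X}_n})$ with $H^1(X_n,\Theta_{X_n})$, which vanishes.

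\textit{Step 3: Kodaira dimension.} Assume $n\geq d$. Then Corollary~\ref{cor:sing} gives that the singularities of $X_n$ are canonical, hence rational, so that $P_m(\hat{X}_n)=\dim H^0(X_n,\omega_{X_n}^{[m]})$ for every $m\geq 0$. Because $G$ acts freely in codimension one and the canonical bundle of each elliptic factor is $G$-equivariantly trivial,
\[
P_m(\hat{X}_n)=\dim H^0\bigl(E^{n-1}\times C,\omega^{\otimes m}\bigr)^G=\dim H^0\bigl(C,\omega_C^{\otimes m}\bigr)^G,
\]
which grows linearly in $m$ since $g(C)\geq 2$ forces positive orbifold canonical degree on the triangle quotient $C/G\cong\PP^1$. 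Thus $\kappa(\hat{X}_n)=1$, and together with Step 2 this proves that $\hat{X}_n$ is a rigid manifold of Kodaira dimension one.

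The main obstacle is step 2(b): establishing $R^1\rho_*\Theta_{\hat{X}_n}=0$ for the specific toric resolutions of both singularity types. This requires a careful local cohomology computation---especially for $\tfrac{1}{\ell}(1,\ldots,1,\ell-1)$, where the resolution involves a nontrivial Hirzebruch--Jung chain rather than a single blow-up, and where the contributions of the various exceptional divisors to the tangent sheaf must be handled simultaneously.
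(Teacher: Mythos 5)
Your overall strategy coincides with the paper's: reduce to the two local conditions $\rho_*\Theta_{\hat{X}_n}\simeq\Theta_{X_n}$ and $R^1\rho_*\Theta_{\hat{X}_n}=0$ via the Leray spectral sequence (this is exactly Proposition~\ref{prop:CondRes}, quoted from \cite{BG20}), resolve each isolated cyclic quotient singularity torically, and use that the singularities are canonical for $n\geq d$ to control the Kodaira dimension. The problem is that your Step~2 stops precisely where the proof begins: you assert that conditions (a) and (b) \qq{reduce to explicit computations on the toric resolutions} and then defer those computations to Section~\ref{sec:resolutions} --- that is, to the very proof you are asked to supply. For the type $\tfrac{1}{\ell}(1,\ldots,1,\ell-1)$ you do not even specify which resolution you take (\qq{a Hirzebruch--Jung-type subdivision} is not a construction in dimension $n\geq 3$), and without a concrete fan neither condition can be checked; condition (a) in particular is sensitive to the choice of subdivision. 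The paper's proof fixes the fan explicitly --- the chain of star subdivisions along $v_k=\tfrac{1}{\ell}(k,\ldots,k,\ell-k)$, $k=1,\ldots,\ell-1$ --- identifies the exceptional divisors as $\PP^{n-1}$ and the projective bundles $\PP(\Oh_{\PP^{n-2}}\oplus\Oh_{\PP^{n-2}}(\ell-k))$, proves (a) by comparing the lattice points of the polyhedra $P_{D_i}$ and $P_{D_i'}$ using the divisibility condition defining the dual lattice, and proves (b) by reducing it via the toric Euler sequence to the vanishing of $R^1\rho_*\Oh_{U_\Sigma}(D_i)$ and of $H^1(E_k,\Oh_{E_k}(E_k))$, which are established through Cartier data plus Demazure vanishing and through Serre duality on the bundles $E_k$. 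None of this appears in your write-up, so the theorem is not actually proved; you have only restated the reduction.

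A secondary issue is in Step~3: $\omega_E$ is trivial as a line bundle but \emph{not} $G$-equivariantly trivial, since the generator of $\ZZ_d$ acts on $dz$ by $\zeta_d^{-1}$. Hence $P_m(\hat{X}_n)$ is the dimension of an isotypic component of $H^0(C,\omega_C^{\otimes m})$, not of the invariant subspace. The conclusion $\kappa(\hat{X}_n)=1$ survives (that isotypic component still grows linearly in $m$; alternatively, $E^{n-1}\times C\to X_n$ is \'etale in codimension one, so $\kappa(X_n)=\kappa(E^{n-1}\times C)=1$, and canonical singularities give $\kappa(\hat{X}_n)=\kappa(X_n)$), but the justification as written is incorrect.
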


By Corollary~\ref{cor:sing}, the singularites of the quotients are isolated, hence, the construction of such resolutions is a local problem. Here, methods from toric geometry can be applied because germs of cyclic quotient singularities are represented by affine toric varieties.
We postpone the proof of the theorem to section~\ref{sec:resolutions}.


\section{The case $S_C \subset G \setminus A$}\label{se5}

The next two sections are devoted to analyze the rigid quotients of products $E^{n-1}\times C$ in more detail.

We start with the investigation of  case (1) of Proposition \ref{genshapes}, i.e., $d=6$ and the entries of
the generating triple $S_C$ for the $G$-action on $C$  are all contained in $G\setminus A$. 
It turns out that under this assumption,  all the possible  rigid quotients
$(E^{n-1} \times C)/G$ are  obtained as covers from a \emph{minimal one}, 
where $G=\mathbb Z_6$ and the curve $C$ has genus two.  

This minimal quotient is described as follows:
consider the hyperelliptic curve $$C':=\lbrace y^2=x_0^6+x_1^6\rbrace \subset \mathbb P(1,1,3)$$ of genus $2$  and consider Fermat's elliptic curve $E=\CC/\ZZ[\ze_3]$ together with the  rigid actions of $\ZZ_6$:
\begin{equation}\label{minimal}
s(x_0:x_1:y)= (x_0:\zeta_6 x_1: y) \qquad \rm{and} \qquad  s(z) = \zeta_6 z.
\end{equation}

Then, the induced diagonal action on $E^{n-1} \times C'$ is rigid and we denote the quotient by $X_{min}$.

More precisely, it holds:

\begin{theorem}\label{quot366}
Assume that $E^{n-1} \times C$ admits a rigid diagonal action of the group  $G=A \rtimes_{\varphi_6} \mathbb Z_6$ such that 
the $G$-action on $C$ has a generating triple $S_C \subset G\setminus A$.  Then:
\begin{enumerate}
\item 
The group $A$ acts freely on $C$ and the quotient $C':=C/A$ is isomorphic to the hyperelliptic curve 
\[
\lbrace y^2=x_0^6+x_1^6\rbrace \subset \mathbb P(1,1,3).
\]
The 
$G/A \simeq \mathbb Z_6$-action on $C'$ is given by 
$s(x_0:x_1:y)= (x_0:\zeta_6 x_1: y)$,
 up to the automorphism of $\mathbb Z_6$ exchanging  $s$ and $s^{-1}$. 
\item
The elliptic curve $E/A$ is isomorphic to $E$ and the induced 
$G/A \simeq \mathbb Z_6$-action on $ E^{n-1} \times C'$ 
is rigid and compatible with the $G$-action on $E^{n-1} \times C$. 
There is a finite holomorphic cover 
\[
f\colon (E^{n-1} \times C)/G \to (E^{n-1} \times C')/\mathbb Z_6 = X_{min}.
\]
of degree $\big\vert  A\big\vert^{n-1}$.
\item The singularities of $X_{min}=(E^{n-1} \times C')/\mathbb Z_6$ are:

 \begin{center}
{\scriptsize
\renewcommand{\arraystretch}{2.0}
\begin{tabular}{| c | c | c | c | c | }
\hline
type  & $\frac{1}{2}(1, \ldots, 1)$ &  $\frac{1}{3}(1, \ldots, 1)$ &  $\frac{1}{3}(1, \ldots, 1,2)$ & $\frac{1}{6}(1, \ldots, 1)$ \\
\hline number  & $\frac{2}{3}(4^{n-1}-1)$  & $3^{n-1}$  & $3^{n-1}-1$ & $2$ \\
\hline
\end{tabular}
}
\end{center}
\end{enumerate}
\end{theorem}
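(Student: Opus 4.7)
The plan is to establish the three parts in order, leveraging the structural results of Sections~\ref{se3} and \ref{se4}.

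For Part (1), Proposition~\ref{genshapes}(1) gives $S_C$ of type $[3,6,6]$ in the form $[s^4h,sk,sc]$ with $h,k,c\in A$. Since every stabilizer on $C$ is a conjugate of $\langle s^4h\rangle$, $\langle sk\rangle$ or $\langle sc\rangle$ and these are disjoint from $A\setminus\{1\}$ by Lemma~\ref{orderel}, the group $A$ acts freely on $C$. The quotient $C'=C/A$ therefore carries a residual $\mathbb Z_6=G/A$-action with induced generating triple $[\bar s^4,\bar s,\bar s]$ of type $[3,6,6]$, and Hurwitz applied to $C'\to C'/\mathbb Z_6=\mathbb P^1$ yields $g(C')=2$. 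By Theorem~\ref{CoveringIsomorphisms} such triangle covers are classified by $(\Aut(\mathbb Z_6)\times\mathcal B_3)$-orbits of generating triples of that type; a direct enumeration produces a single orbit, so $C'$ and its action are unique up to biholomorphism and up to $s\leftrightarrow s^{-1}$. I then check that $\{y^2=x_0^6+x_1^6\}\subset\mathbb P(1,1,3)$ equipped with $s(x_0:x_1:y)=(x_0:\zeta_6 x_1:y)$ realizes this by computing its ramification data over $\mathbb P^1$.

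For Part (2), since the exceptional groups of Proposition~\ref{uniquetrans} do not admit rigid actions on curves of genus $\geq 2$, $A$ coincides with the translation subgroup of $G$ in $\Aut(E)$. Hence $E/A$ is an elliptic curve whose induced $\mathbb Z_6$-action is still $\zeta_6$-multiplication, forcing $E/A\cong\mathbb C/\mathbb Z[\zeta_3]=E$ with the action of \eqref{minimal}; rigidity of the resulting $\mathbb Z_6$-diagonal action on $E^{n-1}\times C'$ follows from Corollary~\ref{allbutonerigid} because $\chi_E(s)=\zeta_6^{-1}$. To build $f$, I factor the map $E^{n-1}\times C\to E^{n-1}\times C'$ through the quotient by the product $A^n$; quotienting only by the diagonal $\Delta_A\leq A^n$ produces an étale degree-$|A|^{n-1}$ cover $(E^{n-1}\times C)/A\to E^{n-1}\times C'$, and passing to $\mathbb Z_6$-quotients on both sides yields the desired $f\colon (E^{n-1}\times C)/G\to X_{min}$ of degree $|A|^{n-1}$.

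For Part (3), I tabulate the fixed-point sets of $s,s^2,s^3$ on each factor. On $E$ the element $s$ fixes only $0$, $s^2$ fixes the order-$3$ subgroup $\{0,p,2p\}$ (with $s$ swapping $p\leftrightarrow 2p$), and $s^3$ fixes the four $2$-torsion points (with $s$ cycling the three non-zero ones); on $C'$ the elements $s$ and $s^3$ fix only $(1:0:\pm 1)$, while $s^2$ additionally fixes $(0:1:\pm 1)$, this pair being $s$-swapped. For each non-trivial $H\leq\mathbb Z_6$, inclusion–exclusion on fixed loci counts tuples in $E^{n-1}\times C'$ with stabilizer exactly $H$; dividing by $|\mathbb Z_6/H|$ gives the number of singularities of $X_{min}$ with local stabilizer $H$. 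The singularity type at each is read off from the linear action of $H$ on the tangent space, obtained in local charts — on $E$ all derivatives are $\zeta_6^k$, while on $C'$ the chart $x_0=1$ gives derivative $\zeta_6^k$ at $(1:0:\pm 1)$ and the chart $x_1=1$ gives $\zeta_6^{-k}$ at $(0:1:\pm 1)$ after rescaling to respect the weighted projective structure. The main delicacy is distinguishing the two $\mathbb Z_3$-types $\frac{1}{3}(1,\ldots,1)$ and $\frac{1}{3}(1,\ldots,1,2)$ according to whether the $C'$-tangent weight agrees with the uniform $E$-weight, and subtracting the all-$e_i=0$ tuples from the $\mathbb Z_3$-exact count, since those tuples actually carry the full $\mathbb Z_6$-stabilizer.
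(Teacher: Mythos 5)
Your proposal is correct, and for parts (2) and (3) it follows essentially the same path as the paper: the same identification of $E/A$ via Proposition~\ref{uniquetrans}, the same appeal to Corollary~\ref{allbutonerigid} with $\chi_E(s)=\zeta_6^{-1}$, the same degree count for $f$ (the paper uses a commutative square where you factor through $\Delta_A\leq A^n$, which amounts to the same thing), and the same fixed-point tables and orbit counting for the singularities, including the correct subtraction of the all-zero tuples and the correct separation of the two $\ZZ_3$-types by comparing the tangent weight on $C'$ with the uniform weight on $E$. The one place where you genuinely diverge is the identification of $C'$ in part (1). The paper argues intrinsically: it shows $g(C')=2$, observes that $\ZZ_6$ cannot contain the hyperelliptic involution (by comparing ramification numbers), descends $s$ to an order-six automorphism of $\PP^1$, deduces $f_6=x_0^6+x_1^6$, and then uses the Chevalley--Weil character computation $\chi_{C'}=\chi_{\zeta_6^{-1}}+\chi_{\zeta_6^4}$ to single out which of the two candidate actions is the rigid one. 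You instead invoke Theorem~\ref{CoveringIsomorphisms}: the generating triples of $\ZZ_6$ of type $[3,6,6]$ form a single $(\Aut(\ZZ_6)\times\mathcal B_3)$-orbit (indeed only $[4,1,1]$ and $[2,5,5]$ occur, swapped by $-1\in\Aut(\ZZ_6)$), so the pair $(C',\text{action})$ is unique up to twisted covering isomorphism, and you then verify the explicit weighted-hypersurface model by checking its ramification data. Your route is shorter and more conceptual, and it delivers exactly the statement as phrased (uniqueness up to $s\leftrightarrow s^{-1}$); the paper's route is constructive, produces the equation without having to guess a model, and additionally pins down which of the two conjugate actions is compatible with the normalization $\chi_E(s)=\zeta_6^{-1}$, which it then reuses in the explicit local-action table for part (3). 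Since you recompute that table directly for your chosen model, nothing is lost.
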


\begin{proof}
According to Proposition \ref{genshapes}, 
a generating triple  $S_C$ for the $G$-action on $C$  is of the form $[s^4h,sk,sc]$. Hence, the only group elements that can have fixed points on $C$ are contained in 
$G \setminus A$. This shows that  $A$ acts freely on $C$. Hurwitz's formula tells us that the genus of  $C'=C/A$ is two, and  thus, the curve is hyperelliptic.

Before we derive the equation of $C'$, we prove the second part of the theorem.

Clearly, $E/A$ is an elliptic curve, which is isomorphic to $E$. Moreover, 
the generating triple for the action of $G/A\simeq \mathbb Z_6$ on $C'$ is the image of $S_C$ under the quotient map $G \to \mathbb Z_6$ 
and therefore equal to $[s^4,s,s]$. Hence, by Corollary~\ref{allbutonerigid}, the action of $\mathbb Z_6$ on $E^{n-1} \times C'$ is also rigid since $g(C')=2$ and 
$\chi_E(s)=\zeta_6^{-1}$.\\
The degree of the induced cover $f$ equals $\lvert A\rvert^{n-1}$ as it fits in the following commutative diagram
\[
    \begin{tikzcd}
		E^{n-1}\times C \arrow{r}\arrow{d} & (E/A)^{n-1}\times (C/A)\simeq E^{n-1}\times C'\arrow{d}\\
	    (E^{n-1}\times C)/G \arrow{r}{f} & (E^{n-1}\times C')/(G/A)
	\end{tikzcd}
\]

Next, we determine the equation of $C'\subset \mathbb P(1,1,3)$, which must be of the form $y^2=f_6(x_0,x_1)$, where $f_6$ is homogenous of degree six. 
 Note that  $G/A\simeq \mathbb Z_6$ does not contain the hyperelliptic involution, 
otherwise the cover $\pi' \colon C'\to C'/\mathbb Z_6 \simeq \mathbb P^1$ would factor through the hyperelliptic cover
\[
\Xi \colon C' \to \mathbb P^1, \qquad (x_0:x_1:y) \mapsto (x_0:x_1). 
\]
This is impossible because  $\Xi$ has $6$ ramification points and $\pi'$ has only $3$.
Hence, $s$ descends to an automorphism $\widehat{s}$ of $\mathbb P^1$ of order six such that $\Xi \circ s = \widehat{s} \circ \Xi$. 
Up to a change of coordinates, $\widehat{s}$ is defined by
\[
\widehat{s}(x_0:x_1)= (x_0:\zeta_6^{\pm 1} x_1).
\]
This  forces $f_6$ to be  $f_6(x_0,x_1)=x_0^6+x_1^6$ up to multiplication of $x_0$ and $x_1$ by non-zero scalars.\\
The action of $s$ on $C'$ must be  one of the following:
\[
s(x_0:x_1:y)= (x_0:\zeta_6^{\pm 1} x_1: \pm y). 
\]
Up to permutation of $x_0$ and $x_1$, only the two possibilities
\[
    s(x_0:x_1:y)= (x_0:\zeta_6 x_1:y) \qquad \makebox{and} \qquad s(x_0:x_1:y)= (x_0:\zeta_6^{-1} x_1:y)
\]
remain. We claim that only the first choice for $s$ yields a rigid action on the product $E^{n-1}\times C'$.\\
By Corollary~\ref{charconds}, the action is rigid if and only if $\langle \chi_{C'},\overline{\chi}_E\rangle=0$. The character of the canonical representation of the curve $C'$ is easy to determine: 
on the open affine 
$x_0=1$, the curve is defined by $y^2=x^6+1$ and  a basis of  $H^0(C', \omega_{C'})$ is 
given by the 1-forms $\frac {dx}{y}$ and  $x \frac{dx}{y}$ for  $y \neq 0$.   
The pullback of this 1-forms with the inverse of $s$ yields  the character 
$\chi_{C'}=\chi_{\zeta_6^{-1}}+ \chi_{\zeta_6^4}$ for the first choice of the action and $\chi_{C'}=\chi_{\zeta_6}+ \chi_{\zeta_6^2}$ for the second. Since $\overline{\chi}_E(s)=\zeta_6$, the claim follows. 
 
Finally, we have to determine the singular points of the minimal quotient $X_{min}=(E^{n-1} \times C')/\mathbb Z_6$. For this, we need to know the points on $C'$ and on $E$ with non-trivial stabilizer and the 
action of the generator of the stabilizer-group in local coordinates.  The table below gives an overview: 

\bigskip

\begin{center}
{\footnotesize
{
\renewcommand{\arraystretch}{1.5}
\setlength{\tabcolsep}{4pt}
\begin{tabular}{lclc|}
\begin{tabular}{|c |}
\hline
point  $q$  \\
\hline
generator of $\Stab(q)$   \\
\hline
 local action   \\
\hline
\end{tabular}
&
\begin{tabular}{|c | c | c |}
\hline
$(1:0:\pm 1) $ & $(0:1:\pm 1)$   \\
\hline
$ s $ & $  s^2 $   \\
\hline
 $x \mapsto \zeta_6 x$ & $x \mapsto \zeta_6^4 x$  \\
\hline
\end{tabular}
&
\begin{tabular}{|c | c | c | }
\hline
$  0 $ & $ \pm\frac{2+\zeta_3}{3} $ & $ \frac{1}{2}, \frac{\zeta_3}{2}, \frac{1+\zeta_3}{2}$  \\
\hline
$  s   $ & $  s^2 $  & $ s^3 $ \\
\hline
 $x \mapsto \zeta_6 x$ & $x \mapsto \zeta_6^2 x$  & $x \mapsto -x$  \\
\hline
\end{tabular}
\end{tabular}
}}
\end{center}

\bigskip
\noindent 
The singularities of type  $\tfrac{1}{2}(1,\ldots, 1)$ are the images of the points 
\[
(z_1, \ldots,z_{n-1},p) \in E^{n-1} \times C'
\]
having a stabilizer group of order two.  Over each of these singularities, there are three points 
in the fibre. 
Using the table, we see that these points have coordinates 
\[
p \in \lbrace (1:0:\pm 1) \rbrace , \qquad  z_i  \in \left\lbrace 0 , \tfrac{1}{2}, \tfrac{\zeta_3}{2}, \tfrac{1+\zeta_3}{2}\right\rbrace,
\]
where at least one $z_i \neq 0$. The number of these points is  $2 (4^{n-1} -1)$, which implies that there are $\tfrac{2}{3} (4^{n-1} -1)$
singularities of type $\tfrac{1}{2}(1,\ldots, 1)$. 
The points on the product $E^{n-1} \times C'$ with stabilizer of order $6$ are 
$\left(0, \ldots,0,(1:0:\pm 1)\right)$. They descend to $2$ singular points of type $\frac{1}{6}(1, \ldots,1)$. 
For the points $(z_1, \ldots,,z_{n-1},p)$ with stabilizer of order $3$, we distinguish the cases
$|\Stab(p)|=3$ and $|\Stab(p)|=6$. In the first case, the coordinates of these points are 
\[
p \in \lbrace (0:1:\pm 1) \rbrace , \qquad  z_i  \in \left\lbrace 0,  \pm\tfrac{2+\zeta_3}{3} \right\rbrace. 
\]
These  $2 \cdot 3^{n-1}$ points descend to $3^{n-1}$ singularities of type $\tfrac{1}{3}(1, \ldots, 1,2)$. 
In the second case, where  $|\Stab(p)|=6$, we have  two choices for $p$  and $3$ choices for $z_i$, where 
at least one of the $z_i$ has stabilizer of order $3$, i.e., at least one  $z_i \neq 0$.  There are $2(3^{n-1}-1)$ of these points. They descend to 
$3^{n-1}-1$ singularities of type $\tfrac{1}{3}(1, \ldots, 1)$. 
\end{proof}


\section{The case $S_C \not\subset G \setminus A$}\label{se6} 

If  the action on $C$ has a generating triple $S_C \not\subset G \setminus A$, then 
Theorem~\ref{quot366} cannot hold without modification: 

\begin{rem}
Assume that $C$ admits a rigid action of the group  $G=A \rtimes_{\varphi_d} \mathbb Z_d$, with a generating triple  $S_C\not\subset G\setminus A$ of type $[d,d,\ell]$, 
 then $C/A$ is the projective line. 
 \end{rem}
 
 \begin{proof}
 Indeed, by Hurwitz's formula
\[
2g(C/A) -2 = d\left(-2 + 2 -\tfrac{2}{d} \right)= -2. \qedhere
\]
\end{proof}

However, we can mod out proper subgroups of $A$ to produce groups of smaller order. Under some assumptions, the quotient $C/A$ still has genus at least two, and we obtain a similar result as in Theorem~\ref{quot366}:

\begin{proposition}
Assume that $E^{n-1} \times C$ admits a rigid diagonal action of a group  $G=A \rtimes_{\varphi_d} \mathbb Z_d$, such that 
the $G$-action on $C$ has a generating triple of type $[d,d,\ell]$ of the form  $[sh,s^{-1}k,c]$. 
Let $A' \lneq A$ be a proper $\varphi_d$-invariant subgroup.  Then: 
\begin{enumerate}
\item
The group 
$G'=A/A' \rtimes_{\varphi_d} \mathbb Z_d$  has a generating triple of type $[d,d,\ell']$, where $\ell'$ is the order of $\overline{c}$ in $A/A'$.
\item
The quotient curve $C':=C/A'$ has genus at least two  if and only if 
\[
1- \tfrac{2}{d} - \tfrac{1}{\ell'} >0.
\]
\item
The  quotient $E/A'$ is isomorphic to $E$ and the induced $G'$-action on $E^{n-1} \times C'$ is rigid and compatible with the $G$-action on $E^{n-1} \times C$. 
There is a finite holomorphic cover 
\[
(E^{n-1} \times C)/G \to (E^{n-1} \times C')/G'.
\]
\end{enumerate}
\end{proposition}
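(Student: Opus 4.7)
The plan is to leverage the $\varphi_d$-invariance of $A'$ to pass to a quotient group structure, then transfer everything (generating triple, cover, rigidity) to the quotient. The three parts are handled in order, each one setting up the next.

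For part (1), since $A$ is abelian and $A'$ is $\varphi_d$-invariant, $A'$ is normal in $G = A \rtimes_{\varphi_d} \mathbb{Z}_d$, and the natural projection $\pi \colon G \to G'$ identifies $G/A'$ with $G' = (A/A') \rtimes_{\varphi_d} \mathbb{Z}_d$. Applying $\pi$ entrywise to $[sh, s^{-1}k, c]$ yields $[\pi(sh), \pi(s^{-1}k), \bar c]$, which generates $G'$ (the images generate since the original elements do) and multiplies to the identity. By Lemma~\ref{orderel} applied inside $G'$, the first two entries have order $d$ because their images in $\mathbb{Z}_d$ are $s$ and $s^{-1}$, while $\bar c \in A/A'$ has order $\ell'$ by definition. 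Assuming $\ell' \geq 2$, this is a valid generating triple of type $[d,d,\ell']$; the degenerate case $\ell' = 1$ is ruled out by part~(2).

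For part (2), the normality of $A'$ in $G$ gives the tower $C \to C/A' \to C/G = \mathbb{P}^1$, in which $C/A' \to \mathbb{P}^1$ is Galois with group $G'$ and monodromy triple exactly the one constructed above. Hurwitz's formula then gives
\[
2g(C') - 2 = |G'|\Big(1 - \tfrac{2}{d} - \tfrac{1}{\ell'}\Big),
\]
and since $|G'| > 0$, the inequality $g(C') \geq 2$ is equivalent to $1 - 2/d - 1/\ell' > 0$.

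For part (3), I would first verify $E/A' \cong E$. Since $G$ admits a rigid action on $C$ of genus $\geq 2$, the corollary after Proposition~\ref{genshapes} shows $G$ is not exceptional, so Proposition~\ref{uniquetrans} gives $T_\psi = A$, meaning $A'$ acts on $E$ by translations and $E/A'$ is an elliptic curve. Writing $E = \mathbb{C}/\Lambda$ with $\Lambda = \mathbb{Z}[i]$ or $\mathbb{Z}[\zeta_3]$ and $E/A' = \mathbb{C}/\Lambda'$ with $\Lambda' \supseteq \Lambda$, the $\varphi_d$-invariance of $A'$ translates to $\zeta_d \Lambda' = \Lambda'$, making $\Lambda'$ a rank-one module over the PID $\mathbb{Z}[\zeta_d]$, hence free of rank one; this forces $E/A' \cong E$. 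The induced $G'$-action on $E/A' \cong E$ is rigid by Proposition~\ref{wallgrps}, and on $C'$ it is rigid since $C' \to \mathbb{P}^1$ is a triangle cover. Rigidity of the diagonal $G'$-action on $E^{n-1} \times C'$ then follows from Corollary~\ref{allbutonerigid}, since $S_{C'} \not\subset G' \setminus (A/A')$ because $\bar c \in A/A'$. Finally, the product quotient map $q \colon E^{n-1} \times C \to E^{n-1} \times C'$ satisfies $q(g \cdot x) = \pi(g) \cdot q(x)$, so it is $G$-equivariant for the $G'$-action on the target and descends to a finite map $(E^{n-1} \times C)/G \to (E^{n-1} \times C')/G'$; a routine index count gives its degree as $|A'|^{n-1}$.

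The main nontrivial step is the identification $E/A' \cong E$, which rests on the complex-multiplication structure; everything else is a direct transfer of the known setup from $G$ to the quotient $G'$ combined with the character-theoretic criterion already established in Corollary~\ref{allbutonerigid}.
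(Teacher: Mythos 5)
Your argument follows the paper's proof essentially step for step: project the generating triple along $\pi\colon G\to G'$, read off the type from Lemma~\ref{orderel}, apply Hurwitz's formula to the triangle cover $C'\to\mathbb P^1$, observe $E/A'\simeq E$, and invoke Corollary~\ref{allbutonerigid} for the rigidity of the induced diagonal action. Your complex-multiplication argument for $E/A'\simeq E$ (the lattice $\Lambda'$ is a $\zeta_d$-invariant, hence rank-one $\mathbb Z[\zeta_d]$-module over a PID) is a welcome expansion of a step the paper merely asserts.

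The one step that does not work as written is your treatment of the degenerate case $\ell'=1$. You claim it is \qq{ruled out by part (2)}, but part (2) is an equivalence characterizing when $g(C')\geq 2$; it neither assumes nor implies $\overline{c}\neq 1$, so nothing there excludes $\ell'=1$. The non-triviality of $\overline{c}$ is needed for $[\pi(sh),\pi(s^{-1}k),\overline{c}]$ to be a spherical generating triple at all, and it follows instead from the properness of $A'$: if $c\in A'$, then $G'=G/A'$ is generated by $\pi(sh)$ and $\pi(s^{-1}k)=\pi(sh)^{-1}$, hence cyclic of order $d$ by Lemma~\ref{orderel}, which forces $A/A'$ to be trivial and $A'=A$, contradicting $A'\lneq A$. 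This is precisely the content of the paper's remark \qq{since $A'\lneq A$, the class $\overline{c}$ is non-trivial}. With this repair, the rest of your argument is sound.
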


\begin{proof} Let $\pi \colon G \to G'$ be the quotient map, then $$[\pi(sh),\pi(s^{-1}k),\pi(c)]= [s\overline{h}, s^{-1}\overline{k},\overline{c}]$$ is a generating triple of 
$G'$. Note that, since $A'\lneq A$, the class $\overline{c}$ is non-trivial. 
It corresponds to the  induced  $G'$-action on  $C'=C/A'$. Using Lemma~\ref{orderel}, we see that  the type of this generating triple  is  $[d,d,\ell']$, where 
$\ell'=\ord(\overline{c})$. \\
The second statement follows immediately from Hurwitz's formula, and for the last one, it suffices to note that  $E/A' \simeq E$. The rigidity of the induced action is clear by Corollary~\ref{allbutonerigid}. 
\end{proof}

In analogy to Theorem~\ref{quot366}, we want to find  for each $d=3,4, 6$ the minimal examples, i.e., the rigid  quotients $(E^{n-1} \times C)/G$ where the group $G=A \rtimes_{\varphi_d} \mathbb Z_d$ 
has the smallest group order.

\begin{lemma}\label{le:MinimalGroupsSnotinA}
The following three groups are the smallest groups of the form $A \rtimes_{\varphi_d} \mathbb Z_d$  which allow a 
faithful rigid and holomorphic action on a smooth curve $C$ of genus $g \geq 2$, such that the generating triple is  not contained in $G \setminus A$:  
\begin{itemize}
\item 
$G_3=\langle s,t  ~ \big\vert ~ s^3=t^7=1,~ sts^{-1} =t^4 \rangle$,
\item
$G_4=\langle s,t ~ \vert ~ s^4=t^5 =1, ~ sts^{-1}=t^3 \rangle$,
\item
$G_6=\langle s,t  ~ \big\vert ~ s^6=t^3=1,~ sts^{-1} =t^2 \rangle$. 
\end{itemize}
\end{lemma}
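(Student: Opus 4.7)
First, I extract the combinatorial constraints. By Proposition~\ref{genshapes}(3), $S_C=[sh,s^{-1}k,c]$ is of type $[d,d,\ell]$ with $c\in A$ of order $\ell$, and Hurwitz's formula applied to $C\to C/G\simeq\PP^1$ combined with $g(C)\ge 2$ gives $1-\tfrac{2}{d}-\tfrac{1}{\ell}>0$, i.e.\ $\ell>d/(d-2)$. Hence $\ell\ge 4,3,2$ for $d=3,4,6$, respectively, and since $A$ contains an element of order $\ell$ we have $|A|\ge\ell$ and $|G|=d|A|$.

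Next, I enumerate small values of $|A|$ via Lemma~\ref{cyclicinvar} and Remark~\ref{CyclicN}. The observation is that for each prime $p$ dividing $|A|$ the $p$-Sylow of $A$ is either non-cyclic, forcing $|A|_p\ge p^2$, or cyclic, in which case its socle is a $\varphi_d$-invariant cyclic subgroup of order $p$ in $\ZZ_n^2$; Lemma~\ref{cyclicinvar} then requires $p\in\{3,7,13,\dots\}$ for $d\in\{3,6\}$ and $p\in\{2,5,13,\dots\}$ for $d=4$. For $d=3$, every $A$ with $|A|\le 6$ is ruled out: either $A$ has no element of order $\ge 4$ (the cases $|A|\le 3$ and $A=\ZZ_2^2$), or it forces a forbidden cyclic $p$-Sylow (the cases $\ZZ_4,\ZZ_5,\ZZ_6$, with $p=2$ or $5$); the minimum is therefore $|A|=7$, $A=\ZZ_7$, $|G|=21$. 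For $d=4$, $|A|=3$ fails since $p=3$ is forbidden, and $|A|=4$ fails because $\ZZ_2^2$ has maximal element order $2<3$ while $\ZZ_4$ admits no $\varphi_4$-invariant embedding in $\ZZ_n^2$: the invariance equation $\varphi_4(1)(a_0,b_0)=(-b_0,a_0)=m(a_0,b_0)$ forces $(m^2+1)a_0=0$, so $4\mid m^2+1$, which has no solution modulo $4$. Hence $|A|=5$, $A=\ZZ_5$, $|G|=20$. For $d=6$, $|A|=2$ is excluded since $p=2$ is forbidden, so $|A|=3$, $A=\ZZ_3$, $|G|=18$.

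Finally, I identify the group structures and verify the existence of a generating triple. On a cyclic invariant subgroup $\langle(1,c)\rangle\le\ZZ_p^2$, the action $\varphi_d(1)$ is multiplication by $-c$, where $c$ is a root modulo $p$ of $x^2-x+1$, $x^2+1$, or $x^2+x+1$ for $d=3,4,6$, respectively (from the proof of Lemma~\ref{cyclicinvar}). Solving modulo $7,5,3$ yields the relations $sts^{-1}=t^4$, $t^3$, $t^2$ of the statement; the alternative root produces an isomorphic group via the relabeling $s\mapsto s^{-1}$. For existence, take $h=1_A$ and $k=c^{-1}$, producing the triple $[s,\,s^{-1}c^{-1},\,c]$ of type $[d,d,p]$; it generates $G$ since $s$ surjects onto $\ZZ_d$ and $c$ generates $A\simeq\ZZ_p$, so by Riemann's existence theorem this defines a triangle $G$-cover $C\to\PP^1$ of genus $\ge 2$ with the desired rigid action. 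The main delicate point is the exclusion of $\ZZ_4$ for $d=4$: Lemma~\ref{cyclicinvar} only rules out cyclic invariant subgroups of prime order and permits $p=2$ for $d=4$, so the non-embedding of the whole $\ZZ_4$ requires the extra modular calculation above.
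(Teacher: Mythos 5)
Your proposal is correct and follows the same strategy as the paper's proof: exhibit an explicit generating triple of type $[d,d,\ell]$ for the candidate group, then use Hurwitz's formula to force $A$ to contain an element of order $\ell>d/(d-2)$ and Lemma~\ref{cyclicinvar} together with Remark~\ref{CyclicN} to exclude all smaller admissible $A$. The paper only writes out $d=3$ and declares $d=4,6$ ``similar''; your treatment of $d=4$ supplies a step that this remark genuinely elides, namely that $A\simeq\ZZ_4$ is \emph{not} excluded by Remark~\ref{CyclicN} (since $p=2$ is permitted for $\varphi_4$) and requires the separate computation that $\varphi_4(1)(a_0,b_0)=m(a_0,b_0)$ forces $4\mid m^2+1$, which is impossible. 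Your explicit derivation of the presentations $sts^{-1}=t^4,t^3,t^2$ from the roots of the relevant quadratics mod $7,5,3$ is also a worthwhile addition that the paper leaves implicit.
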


\begin{proof}
We only treat the case  $d=3$ because the arguments for  $d=4$ and $6$  are similar.  
Note that  $G_3$ admits the generating triple $[s,s^2t,t^6]$ of type $[3,3,7]$. Hence,  it is enough to 
exclude the groups $A \rtimes_{\varphi_3} \mathbb Z_3$ with  $|A| \leq 6$. 
By Hurwitz's formula: 
\[
0 < 1-\tfrac{2}{3} - \tfrac{1}{\ell},
\]
which implies $4 \leq \ell$, i.e., $A$ has an element of order at least four. Since $A$ has at most six elements, it must be 
cyclic of order $n=4$, $5$ or $6$. This is impossible by Remark \ref{CyclicN} because each $n$ has a  prime divisor 
$p \neq 3$  such that $3 \nmid (p-1)$.
\end{proof}

\begin{proposition}\label{minimalex}
For each of the groups $G_d$, there exists up to isomorphism a unique faithful rigid and holomorphic action on a smooth curve $C_d$. The curves $C_d$ are 
canonically embedded. Their equations in $\mathbb P^{g-1}$ and the actions by projective transformations  are given  in the table below: 
 \begin{center}
{\footnotesize
\begin{tabular}{| l |  l  | }
 \hline 
&  \\
d=3 & $G_3= 
 \langle s,t  ~ \big\vert ~ s^3=t^7=1,~ sts^{-1} =t^4 \rangle$  \\
& \\
& $C_3= \lbrace x_0^3x_1+x_1^3x_2+x_0x_2^3=0 \rbrace \subset \mathbb P^2$  \\ 
& \\
& $ s \mapsto 
\begin{pmatrix}
0 & 1 & 0 \\
0 & 0 & 1 \\
1 & 0 & 0 
\end{pmatrix},$   
 \qquad 
$ t \mapsto 
\begin{pmatrix}
\zeta_7^4 & 0 & 0 \\
0 & \zeta_7^2 & 0 \\
0 & 0 & \zeta_7 
\end{pmatrix}$  \\
& \\
\hline
&  \\

d=4 & $G_4 =\langle s,t ~ \vert ~ s^4=t^5 =1,~ sts^{-1}=t^3 \rangle$   \\
& \\
 & 
$C_4=\lbrace x_0x_3+x_1x_2 = 0, ~~ x_0^2x_2-x_0x_1^2+x_1x_3^2-x_2^2x_3 =  0\rbrace \subset \mathbb P^3$
\\
& \\
& 
$s \mapsto 
 \begin{pmatrix}
 0 & 0   & 1 & 0 \\
1 & 0  & 0  & 0  \\
0  & 0 & 0 & 1 \\
0 & 1 &  0 & 0 
  \end{pmatrix},$ \qquad  
$t \mapsto \begin{pmatrix}
\zeta_5 & 0 & 0 & 0 \\
0 & \zeta_5^2 & 0 & 0 \\
0 & 0 & \zeta_5^3 & 0 \\
0 & 0 & 0 &  \zeta_5^4
  \end{pmatrix}$ \\
& \\
\hline
& \\

d=6 & $G_6 =\langle s,t  ~ \big\vert ~ s^6=t^3=1,~ sts^{-1} =t^2 \rangle$   \\
&  \\
 & 
$C_6=\lbrace x_1x_3+x_0x_2=0, ~~ x_0^3+x_1^3+x_2^3+x_3^3=0\rbrace \subset \mathbb P^3$
\\
& \\
& 
$s \mapsto 
\begin{pmatrix}
0 & \zeta_3 & 0 & 0 \\
1 & 0 & 0 &  0 \\
0 & 0 & 0 & \zeta_3^2 \\
0 & 0 & 1 & 0 
\end{pmatrix},   \qquad 
t \mapsto 
\begin{pmatrix}
\zeta_3 & 0 & 0 & 0\\
0 & \zeta_3^2 & 0 & 0 \\
0 & 0 & \zeta_3^2 & 0 \\
0 & 0 & 0 & \zeta_3 
\end{pmatrix}.$
\\
& \\
\hline

  \end{tabular}
}
\end{center}
\end{proposition}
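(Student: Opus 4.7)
The plan is to treat each $d \in \{3, 4, 6\}$ in parallel, via three tasks: a uniqueness argument for the generating triple $S_{C_d}$ of the $G_d$-action on $C_d$; a character computation for the canonical representation, giving the matrix forms of $t$ and $s$; and a computation of $G_d$-invariant equations on the canonical model.

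For the uniqueness, in each case $A = \langle t\rangle$ is cyclic of prime order $7$, $5$, $3$ respectively, so by Proposition~\ref{genshapes}(3) combined with Hurwitz's formula, $S_{C_d}$ has type $[3,3,7]$, $[4,4,5]$, or $[6,6,3]$ and is of the form $[sh, s^{-1}k, c]$ with $c$ a generator of $A$. The braid action, conjugation by elements of $A$, and the outer automorphisms of $G_d$ (which scale $A$) reduce the finitely many candidate triples to a single $\Aut(G_d) \times \mathcal{B}_3$-orbit up to the conjugate triple $\iota(S_{C_d})$. This can be checked by hand or with a short MAGMA computation. Theorem~\ref{CoveringIsomorphisms} then yields uniqueness of $(C_d, G_d)$ up to isomorphism, and Hurwitz gives $g(C_d) = 3, 4, 4$.

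Next, I would show that $C_d$ is non-hyperelliptic: $G_3$ has no element of order $2$; for $d = 4, 6$ the unique conjugacy class of involutions in $G_d$ acts on $C_d$ with a number of fixed points (read off from the generating triple) incompatible with being the hyperelliptic involution. Hence the canonical map $\phi_K\colon C_d \hookrightarrow \mathbb{P}^{g-1}$ is an embedding. Applying the Chevalley-Weil formula to the triangle cover gives $\chi_{C_d}$ explicitly: for $d = 3$ and $d = 4$, $\chi_{C_d}$ is the unique irreducible character of $G_d$ of dimension $g(C_d)$; for $d = 6$ it decomposes as the sum of two specific two-dimensional irreducibles of $G_6$. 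Diagonalising the action of $t$ on $H^0(C_d, \omega_{C_d})$ yields the stated diagonal form of $t$, and the matrix of $s$ is then forced (up to the relabelling of eigenlines absorbed into the basis choice) by the way $s$ permutes the eigenspaces of $t$.

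Finally, I would identify the equations by computing the $G_d$-invariants in the relevant graded pieces of the coordinate ring of $\mathbb{P}^{g-1}$. For $d = 3$, a direct check shows that the space of $G_3$-invariant quartics in $\mathbb{P}^2$ is one-dimensional, spanned by $x_0^3 x_1 + x_1^3 x_2 + x_0 x_2^3$, i.e.\ the Klein quartic. For $d = 4, 6$, any canonical genus-$4$ curve is a complete intersection of a quadric and a cubic in $\mathbb{P}^3$; in each case the spaces of invariant quadrics and of invariant cubics modulo quadric multiples are each one-dimensional, producing the listed equations, and smoothness follows from the Jacobian criterion. The main obstacle is the bookkeeping in the uniqueness step: the interaction between braid moves and the automorphisms of $G_d$ has to be controlled case-by-case, and this is most delicate for $G_6$ since its outer automorphism group is larger and the family $[sh, s^{-1}k, c]$ is correspondingly wider before quotienting by the various equivalences.
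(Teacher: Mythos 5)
Your outline runs parallel to the paper in several places (non-hyperellipticity, identification of the canonical representation, invariant-theoretic derivation of the equations), and your uniqueness step via generating triples and Theorem~\ref{CoveringIsomorphisms} is a legitimate alternative to the paper's route, which instead forces the canonical representation (for $G_4$ it must be the unique irreducible of degree $4$, being faithful with $G_4$ non-abelian) and then forces the ideal. However, there is a concrete gap in your final step that would produce the wrong curve for $d=4$. You propose to find the cubic as a generator of ``the space of invariant cubics modulo quadric multiples'', asserting this space is one-dimensional. The space $\CC[x_0,\ldots,x_3]_3^{G_4}$ is indeed one-dimensional, spanned by $K_1=x_0^2x_2+x_0x_1^2+x_1x_3^2+x_2^2x_3$, but this generator does \emph{not} lie in $I(C_4)$: the paper proves $I(C_4)_3^{G_4}=0$ by comparing $\dim \CC[x_0,\ldots,x_3]_3^{G_4}=1$ with $h^0(\omega_{C_4}^{\otimes 3})^{G_4}=1$ in Noether's exact sequence. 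The cubic that actually cuts out $C_4$ is $K_{-1}=x_0^2x_2-x_0x_1^2+x_1x_3^2-x_2^2x_3$, which is only \emph{semi}-invariant ($s$ sends it to $-K_{-1}$). The ideal of a $G$-stable curve need only be $G$-stable, not generated by invariants: here $I(C_4)_3$ is the four-dimensional irreducible spanned by $x_0Q,\ldots,x_3Q$ plus a one-dimensional representation spanned by some $K_\lambda$, $\lambda\in\{\pm 1,\pm i\}$, and pinning down $\lambda=-1$ requires the two extra arguments the paper supplies: the fixed points of $s$ on the quadric must lie on the curve (excluding $\lambda=\pm i$), and the vanishing $I(C_4)_3^{G_4}=0$ (excluding $\lambda=1$). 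Your recipe as written outputs $K_1$, hence the wrong curve; only for $d=3$ and $d=6$ do the listed equations happen to be genuinely invariant.

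Two smaller points. Your non-hyperellipticity argument rules out that an involution \emph{inside} $G_d$ is the hyperelliptic involution, but not that $C_d$ is hyperelliptic with $\iota\notin G_d$; in that case $G_d\hookrightarrow \Aut(C_d)/\langle\iota\rangle\leq \PGL(2,\CC)$, and one concludes as in the paper from Klein's list (this is in fact the paper's entire argument, using that $Z(G_d)$ contains no involution, so it covers all three cases at once). Also, in the uniqueness step, ``a single $(\Aut(G_d)\times\mathcal B_3)$-orbit up to the conjugate triple $\iota(S)$'' is weaker than what you need: if $\mathcal S_C$ decomposed into two conjugate orbits you would still owe an argument that the two curves are isomorphic. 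The computation recorded in the proof of Theorem~\ref{main2} shows the action on $\mathcal S_C$ is in fact transitive, which is what your argument should establish.
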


\bigskip

\begin{rem}
The curve $C_3$ is the \emph{Klein Quartic}, a curve of genus three with automorphism group $\PSL(2,\mathbb F_7)$, a finite simple group of order $168$. It is the largest group of automorphisms for a genus three curve (cf. \cite{Klein}). \\
The curve $C_4$ is isomorphic to  \emph{Bring's curve} in $\mathbb P^4$, a smooth curve of genus $4$  on the \emph{Clebsch diagonal cubic surface} defined by the equations:
\[
x_0^3+ \ldots + x_4^3= x_0^2+ \ldots + x_4^2=x_0+ \ldots + x_4=0. 
\]
A biholomorphic map from $C_4$ to Bring's curve is induced by 
\[
 \begin{pmatrix}
 -1 &  1  & 1 & -1 \\
-\zeta_5& \zeta_5^2  & \zeta_5^3  & -\zeta_5^4  \\
-\zeta_5^2 & \zeta_5^4  & \zeta_5 & -\zeta_5^3  \\
-\zeta_5^3 & \zeta_5 & \zeta_5^4  & -\zeta_5^2  \\
-\zeta_5^4 & \zeta_5^3  & \zeta_5^2  & -\zeta_5
 \end{pmatrix}
 \]
cf. \cite[proof of Theorem 9.5.8]{Dol12}. Note that the full automorphism group of Bring's curve is $\mathfrak S_5$, acting in the obvious way. It is the largest group of automorphisms for a genus $4$ curve (cf. \cite{wiman}).\\
A detailed description of the geometry of this curve can be found in the recent preprint \cite{disney}. 
\end{rem}

\begin{proof}[Proof of Proposition~\ref{minimalex}]
First, we show that the curves are canonical. 
 Suppose that one of the groups $G_d$ acts on  a hyperelliptic curve. Then, since the center $Z(G_d)$ 
contains no involution, the group $G_d$ embeds in  $\PGL(2,\mathbb C)$.
This is impossible because 
the finite subgroups of $\PGL(2,\mathbb C)$ are known to be cyclic, dihedral, $\mathfrak A_4$, $\mathfrak S_4$ and $\mathfrak A_5$.\\ 
Next, we shall derive equations for the curves and the actions. Here, we only treat $G_4$. The computations in case of the other two groups are similar. 
Since  $C_4 \subset \mathbb P^{g-1}$ is canonically embedded, the group $G_4$ acts by projective transformations. Note that $g(C_4)=4$ as the type of the generating triple of $G_4$ equals $[4,4,5]$.
The group $G_4$ has four representations of degree $1$, which are obtained from $G_4/\langle t \rangle \simeq \mathbb Z_4$ by inflation, and one irreducible representation of degree $4$ defined by:
\[
s \mapsto 
 \begin{pmatrix}
 0 & 0   & 1 & 0 \\
1 & 0  & 0  & 0  \\
0  & 0 & 0 & 1 \\
0 & 1 &  0 & 0 
  \end{pmatrix} \qquad  \makebox{and} \qquad 
t \mapsto \begin{pmatrix}
\zeta_5 & 0 & 0 & 0 \\
0 & \zeta_5^2 & 0 & 0 \\
0 & 0 & \zeta_5^3 & 0 \\
0 & 0 & 0 &  \zeta_5^4
  \end{pmatrix}.
\]
The canonical representation 
$G_4 \to GL(H^0(\omega_{C_4}))$ is therefore either the sum of four $1$-dimensional representations or equal to 
the irreducible representation 
of degree $4$. The first possibility can be ruled out since the canonical representation is faithful and $G_4$ is not abelian. 
This shows that the $G_4$-action on $C_4$ is given by the above  matrices because the vector spaces
$H^0(\omega_{C_4})$ and $\mathbb C[x_0, \ldots, x_3]_1$ are in natural bijection. 
The elements of order four in $G_4$ are conjugated either to  $s$ or to the inverse $s^{-1}$. 
As a projective transformation, $s$   has four fixed points:
\[
  p_1:=(1:i:-i:-1),\ p_2:=(1:-i:i:-1), \ p_3:=(1:1:1:1),\ p_4:=(1:-1:-1:1)
\]

Since $g=g(C_4)=4$, the curve $C_4$ is a complete intersection of a quadric $Q$ and a cubic $K$ (cf. \cite[p.~258]{GriffH}). To find $Q$ and $K$, we use Noether's classical theorem (cf. \cite[p.~253]{GriffH}), which says that the following sequence is exact for all $k\geq 1$:
\begin{equation}\label{noetherseq}
0 \to I(C_4)_k \to \mathbb C[x_0, \ldots, x_{3}]_k \to H^0(C,\omega_C^{\otimes k}) \to 0.  
\end{equation}
The vector space  $I(C_4)_2$ is $1$-dimensional and generated by $Q$, whereas $I(C_4)_3$ is $5$-dimensional and generated by $K$ together with the four reducible cubics $x_0Q, \ldots, x_3 Q$.  
Since the exact sequence \eqref{noetherseq} consists of compatible $G_4$-representations, they remain exact when  we  take the invariant parts. 
In particular, for $k=2$, we obtain the equality  
\[
I(C_4)_2^{G_4} = \mathbb C[x_0, \ldots, x_{3}]_2^{G_4} 
\]
by the rigidity of  the action. 
Since $\mathbb C[x_0, \ldots, x_{3}]_2$ is the symmetric square of the canonical representation, we easily find that 
$\mathbb C[x_0, \ldots, x_{3}]_2^{G_4}$ is one-dimensional, i.e., the quadric $Q$ is $G_4$-invariant. 
In particular, it  is invariant under the action of $t$ which is diagonal. This implies  that $Q$ is a linear combination of the monomials $x_0x_3$ and $x_1x_2$.   These monomials are  swapped  by $s$, which forces the quadric to  be 
\[
Q=x_0x_3 + x_1x_2,
\]
 up to a non-zero scalar. 
We immediately see that  $p_3$ and $p_4$ do not belong to  $Q$. Hence, the two $G$-orbits of points with stabilizer of order $4$ are represented by $p_1$ and $p_2$.\\
Next, we consider the sequence~\eqref{noetherseq} for $k=3$. As $Q$ is invariant  under $G_4$, 
the subspace of $I_3$ which is spanned by the products $x_0 Q, \ldots , x_3  Q$ is the irreducible $4$-dimensional representation of $G$.
The cubic $K$, on the other hand, yields  a one-dimensional representation. Since all one-dimensional representations of $G_4$ are obtained from  
$G_4/\langle t \rangle$,  they are trivial on  $\langle t \rangle$.  
In particular, $K$ is invariant under the action of $t$, and therefore a linear combination of the $t$-invariant  monomials of degree three:  
\[
x_0^2x_2, \quad x_0x_1^2, \quad x_1x_3^2 \quad \makebox{and} \quad x_2^2x_3. 
\]
These monomials are cyclically permuted by $s$, which shows that the cubic  $K$ is  (up to a scalar multiple) one of the following: 
\[
K_{\lambda}= x_0^2x_2 + \lambda x_0x_1^2+ \lambda^2 x_1x_3^2+ \lambda^3x_2^2x_3, \quad \makebox{where} \quad 
\lambda \in \lbrace \pm1, ~ \pm i \rbrace. 
\]
The possibilities  $K_{\pm i}$ can be ruled out because 
$p_1 \notin K_{i}$ and $p_2 \notin K_{-i}$. We claim that $I(C_4)_3^{G_4}=0$, which excludes  $K=K_{1}$, and finally implies $K=K_{-1}$. 
To proof the claim, we consider the $G_4$-invariant part of \ref{noetherseq} for $k=3$:
\begin{equation}\label{G4invariant}
0 \to I(C_4)_3^{G_4} \to \mathbb C[x_0, \ldots, x_{3}]_3^{G_4} \to H^0(C,\omega_{C_4}^{\otimes 3})^{G_4} \to 0.
\end{equation}
The space  $\mathbb C[x_0, \ldots, x_{3}]_3^{G_4}$ is 
easily seen to be one-dimensional by computing the inner product 
\[
\langle \Sym^3(\chi_{C_4}), \chi_{triv} \rangle = \frac{1}{|G_4|} \sum_{g \in G_4} \Sym^3(\chi_{C_4})(g)
\]
 with the help of the well known formula:   
\[
\Sym^3(\chi_{C_4})(g)= \tfrac{1}{6}\cdot\left(\chi_{C_4}(g)^3+3\chi_{C_4}(g^2)\chi_{C_4}(g)+ 2\chi_{C_4}(g^3)\right), \qquad g \in G_4. 
\]
\cite[Lemma VI.11, Examples VI.12]{beauAlgSurf} allows us to compute 
\[
h^0(\omega_{C_4}^{\otimes 3})^G= h^0\left(\mathcal O_{\mathbb P^1}\left(-6+2 \left\lfloor \tfrac{9}{4} \right\rfloor + \left\lfloor \tfrac{12}{5} \right\rfloor \right)\right) =h^0(\mathcal O_{\mathbb P^1})=1,
\]
and the exactness of the sequence \ref{G4invariant} yields $I(C_4)_3^{G_4}=0$. 
\end{proof}

\begin{theorem}\label{main2}
    Let $n\geq 2$ and $X_d=(E^{n-1}\times C_d)/G_d$ a rigid quotient. Then, the types of the singularities of $X_d$ and their numbers  are:
        \begin{center}
            {\scriptsize\renewcommand\arraystretch{2.0}\begin{tabular}{|c|c|c|c|c|c|c|c|}
            \hline
                 &   $\tfrac{1}{2}(1,\ldots,1)$ & $\tfrac{1}{3}(1,\ldots,1)$& $\tfrac{1}{3}(1,\ldots,1,2)$ & $\tfrac{1}{4}(1,\ldots,1)$ & $\tfrac{1}{4}(1,\ldots,1,3)$ & $\tfrac{1}{6}(1,\ldots,1)$ & $\tfrac{1}{6}(1,\ldots,1,5)$\\
                \hline
                $d=3$ & -- & $3^{n-1}$ & $3^{n-1}$ & -- & -- & -- & --\\
                \hline
                $d=4$ & $2^{n-1}(2^{n-1}-1)$ &  -- & -- & $2^{n-1}$ & $2^{n-1}$ & -- & --\\
                \hline
                $d=6$ & $\tfrac{2}{3}(4^{n-1}-1)$ & $\tfrac{1}{2}(3^{n-1}-1)$ & $\tfrac{1}{2}(3^{n-1}-1)$ & -- & -- & $1$ & $1$\\
                \hline
            \end{tabular}
            }
        \end{center}

    If $d=6$, then there is precisely one isomorphism class of such quotients.\\
            If $d=3,4$, then there are at most two isomorphism classes, which can be identified under complex conjugation.
\end{theorem}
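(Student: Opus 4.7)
The proof splits into computing the singular locus of $X_d$ and classifying the quotients up to isomorphism.

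\textbf{Singularities.} Corollary~\ref{cor:sing} already restricts the possible types to $\tfrac{1}{\ell}(1,\ldots,1)$ and $\tfrac{1}{\ell}(1,\ldots,1,\ell-1)$ with $\ell\mid d$. Each singularity corresponds to a $G_d$-orbit of a point $(z_1,\ldots,z_{n-1},q)\in E^{n-1}\times C_d$ whose stabilizer $H$ has isolated fixed points on every factor, which forces $H$ to be cyclic with $H\cap A=\{1\}$ and $|H|\mid d$. For each conjugacy class of such $H$ we count the points $z\in E$ with $H\subseteq\Stab(z)$ from the branching signature $[n_1,n_2,n_3]$ of $E\to E/G_d$ (Proposition~\ref{wallgrps}), and the points $q\in C_d$ with $H\subseteq\Stab(q)$ from the triple $[sh,s^{-1}k,c]$ of $C_d\to C_d/G_d$ (Proposition~\ref{genshapes}). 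Taking the $E$-count to the $(n-1)$st power, multiplying by the $C_d$-count, summing over conjugates of $H$, and dividing by the orbit size $|G_d|/|H|$ yields the entries of the table. By the proof of Corollary~\ref{cor:sing}, stabilizers conjugate to $\langle sh\rangle$ produce type $\tfrac{1}{\ell}(1,\ldots,1)$, whereas conjugates of $\langle s^{-1}k\rangle$ produce type $\tfrac{1}{\ell}(1,\ldots,1,\ell-1)$; the two associated branch orbits on $C_d$ have the same cardinality, which explains the equal counts in each row whenever $\ell>2$.

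\textbf{Uniqueness.} By Corollary~\ref{bihollift}, isomorphism classes (admitting a compatible lift) correspond to orbits of tuples $[S_1,\ldots,S_{n-1},S_n]$ of generating triples under $\Aut(G_d)\times\mathcal{B}_3^n\times\mathfrak{S}_{n-1}$, where $S_n$ is of $C_d$-type and $S_1,\ldots,S_{n-1}$ of $E$-type. Proposition~\ref{minimalex} gives uniqueness of $S_n$ up to $\Aut(G_d)\times\mathcal{B}_3$, so the problem reduces to the $E$-triples. Condition~(3) of Corollary~\ref{charconds} forces all canonical characters $\chi_{E_j}$ to coincide, leaving the binary choice $\chi_E\in\{\chi_{\zeta_d},\chi_{\zeta_d^{-1}}\}$. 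A finite enumeration of generating triples of $E$-type in each $G_d$ (tractable since $|G_d|\leq 21$) will show that for each fixed $\chi_E$ they form a single $\mathcal{B}_3$-orbit. Consequently, after fixing $S_n$, applying braid moves to each $S_j$, and permuting the $E$-factors, we may assume $S_1=\cdots=S_{n-1}$ is a chosen standard triple, so we end up with at most two equivalence classes of tuples, parametrised by $\chi_E$.

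\textbf{Effect of complex conjugation.} The two classes coincide precisely when some $\alpha\in\Aut(G_d)$ exchanges $\chi_{\zeta_d}$ and $\chi_{\zeta_d^{-1}}$, equivalently sends $s$ to $s^{-1}$ modulo $A$. Writing $\alpha(s)=s^{-1}t^c$ and $\alpha(t)=t^a$, the relation $sts^{-1}=t^k$ (with $k=4,3,2$ for $d=3,4,6$) combined with the identity $s^{-1}ts=t^2$, which a short computation shows to hold in all three groups, reduces compatibility to $(k-2)a\equiv 0\pmod{|A|}$. For $d=6$ this is automatic, so $\alpha$ exists and the two classes merge, yielding a unique $X_6$. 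For $d=3,4$ the equation forces $a\equiv 0$, which is impossible, so no such $\alpha$ exists and the two classes may be genuinely distinct. They are, nevertheless, interchanged by the conjugation involution $\iota$ from Section~\ref{se3}, which proves that they are related by complex conjugation.

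The main obstacle is the braid-orbit enumeration in the uniqueness step: it is a finite but delicate computation that must be carried out case by case using the explicit presentations of Lemma~\ref{le:MinimalGroupsSnotinA}; once that enumeration is granted, everything else follows cleanly from the results of the preceding sections.
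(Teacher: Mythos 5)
Your overall strategy matches the paper's: the singularities are found by explicit fixed-point counting (as in Theorem~\ref{quot366}), and the isomorphism classes are controlled by reducing tuples of generating triples to representatives of the $\Aut(G_d)\times\mathcal B_3^n\times\mathfrak S_{n-1}$-action via Corollary~\ref{bihollift}, with the character $\chi_E\in\{\chi_{\zeta_d},\chi_{\zeta_d^{-1}}\}$ as the only remaining invariant. Your hand computation showing that $G_3$ and $G_4$ admit no automorphism inverting $s$ modulo $A$ (via $s^{-1}ts=t^2$ and $(k-2)a\equiv 0$) is a nice replacement for part of the paper's MAGMA verification, and your deduction that complex conjugation must then swap the two classes for $d=3,4$ is sound once the "at most two classes" statement is granted. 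However, there are two genuine gaps.

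First, the counting recipe for the singularities is wrong as stated. If you count all points with $H\subseteq\Stab$ on each factor, multiply, and divide by $|G_d|/|H|$, you miscount because points whose product-stabilizer strictly contains $H$ lie in smaller orbits. Concretely, for $d=6$ and $H=\langle s^3\rangle$ you would get $2\cdot 4^{n-1}/3$, which is not even an integer; the correct count $\tfrac{2}{3}(4^{n-1}-1)$ requires first removing the two points $(0,\ldots,0,(1{:}0{:}\pm1))$ whose stabilizer has order six. One must stratify by the \emph{exact} stabilizer of the product point (equivalently, do the case distinction carried out in the proof of Theorem~\ref{quot366}).

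Second, your argument that the two classes merge for $d=6$ is incomplete. Exhibiting $\alpha\in\Aut(G_6)$ that exchanges $\chi_{\zeta_6}$ and $\chi_{\zeta_6^{-1}}$ only shows that the character no longer separates the two candidate tuples; since $\alpha$ is itself an element of the acting group, $\alpha(T_2)$ is automatically in the orbit of $T_2$, so nothing is gained unless you also control the $C$-component. To conclude $[S_1,\ldots,S_1,S]\sim[S_2,\ldots,S_2,S]$ you must produce a \emph{single} group element matching all coordinates simultaneously; in particular you need $\alpha(S)$ to be $\mathcal B_3$-equivalent to $S$ (knowing that $\mathcal S_C$ is one $\Aut(G)\times\mathcal B_3$-orbit does not give this for your specific $\alpha$). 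This is exactly the point the paper settles by checking computationally that $[S_1,S]$ and $[S_2,S]$ lie in the same $\Aut(G)\times\mathcal B_3^2$-orbit. Your enumeration step would need to be strengthened accordingly.
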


\begin{proof}
First, we prove the statement about the isomorphism classes.
We start with the group $G_6=\langle 18,3 \rangle \simeq \mathbb Z_3 \rtimes_{\varphi_6} \mathbb Z_6$.
Let $\mathcal S_C$ be the set of generating triples $S$ for $G_6$ of type $[6,6,3]$ such that $S\not\subset G_6\setminus A$.
As usual, $A$ is the unique 
normal abelian subgroup of $G_6$ of index $6$. 
Let $\mathcal S_E$ be the set of generating triples of $G_6$ of type $[2,3,6]$. Let 
\[
\mathfrak X \subset \mathcal S_E^{n-1}\times\mathcal S_C
\]
be the set of $n$-tuples of generating triples which correspond to a rigid action. We show that the action of 
$\Aut(G) \times \mathcal B_3^n \times \mathfrak S_{n-1}$ on this set is transitive, i.e., there is only one orbit and therefore only one 
isomorphism class.
Let $R_C\subset \mathcal S_C$ be a set of representatives for the $(\Aut(G) \times \mathcal B_3)$-action on $\mathcal S_C$ and 
$R_E \subset \mathcal S_E$ be a set of representatives for the $\mathcal B_3$-action on $\mathcal S_E$. 
Clearly, each orbit of the $(\Aut(G) \times \mathcal B_3^n \times \mathfrak S_{n-1})$-action on $\mathfrak X$ has a representative 
in the set  $R_E^{n-1}\times R_C$. Using a MAGMA computation, we see that the set $R_C$ has just one element, which we denote by $S$. Note that the transitivity of the action on $\mathcal S_C$ follows also from the uniqueness of the curve proven in Proposition~\ref{minimalex}. Furthermore, the set $R_E$ has exactly two elements 
$S_1$ and $S_2$; the  characters of the corresponding canonical actions 
 are $\chi_{\zeta_6}$ and $\chi_{\zeta_6^{-1}}$.  According to Corollary \ref{charconds},  the unique tuples in 
$R_E^{n-1}\times R_C$
that  give  rigid actions are  $[S_1,\ldots,S_1,S]$ and $[S_2,\ldots,S_2,S]$. Thus, there are at most two isomorphism classes of rigid quotients.
Again a MAGMA computation shows that 
$[S_1,S]$ and $[S_2,S]$ are in the same orbit under the action of 
$\Aut(G) \times \mathcal B_3^2$. This implies that $[S_1,\ldots,S_1,S]$ and $[S_2,\ldots,S_2,S]$ are equivalent under the action of $\Aut(G) \times \mathcal B_3^n \times \mathfrak S_{n-1}$, and therefore, there is a unique isomorphism class of rigid quotients,  see Corollary \ref{bihollift}. 

Now, let $d=3,4$. Also here,  following the above strategy, only two tuples of generating triples remain, $[S_1,\ldots,S_1,S]$  and $[S_2,\ldots,S_2,S]$. However, in this case, they belong to different orbits under the action of $\Aut(G) \times \mathcal B_3^n \times \mathfrak S_{n-1}$. But the complex conjuagte of $[S_1,\ldots,S_1,S]$ lies in the same orbit as the second tuple. This means that the first quotient is isomorphic to the complex conjugate of the second quotient. 

The reader can find the source code for the MAGMA computations on the webpage 
\begin{center}
\url{http://www.staff.uni-bayreuth.de/~bt300503/publi.html}.
\end{center}

The computation of the singularities can be done similarly as in the proof of Theorem~\ref{quot366} (see also \cite{BG20} for the case $d=3$).
\end{proof}

\begin{openprob}
It is not clear to us if the complex conjugate  varieties in Theorem \ref{main2} (for $d=3,4$) are biholomorphic or not. 
Corollary \ref{bihollift} just implies that there is no biholomorphism lifting to a 
biholomorphism of the form 
\begin{align*}
E^{n-1}\times C_d &\longrightarrow   E^{n-1}\times C_d\\
(z_1,\ldots,z_n)&\longmapsto (u_1(z_{\tau(1)}),\ldots,u_{n-1}(z_{\tau(n-1)}), u_{n}(z_{n})),
\end{align*}
for some permutation $\tau\in\mathfrak S_{n-1}$.
However there might be other biholomorphisms that are not of product type or do not even lift.  
\end{openprob}


\section{Proof of Theorem~\ref{theo:Resolutions}}\label{sec:resolutions}

This section is devoted to the proof of Theorem~\ref{theo:Resolutions}. For this, we construct resolutions of the rigid singular quotients $X$ of the form 
\[
(E^{n-1}\times C)/G, \qquad \makebox{where} \qquad G= A\rtimes_{\varphi_d}\ZZ_d,
\]
preserving the rigidity in order to obtain rigid projective \emph{manifolds}.\\
If $n\geq d$, then all singularities of the quotients are canonical (cf. Corollary~\ref{cor:sing}), which implies that the Kodaira dimension of the resolutions is the same as the Kodaira dimension of the product $E^{n-1}\times C$, namely $1$.

Leray's spectral sequence can be used to derive sufficient conditions on a resolution $\rho\colon\hat{X}\to X$ to guarantee that $\hat{X}$ is still infinitesimally rigid:

\begin{proposition}[{\cite{BG20}, Proposition~2.10}]\label{prop:CondRes}
	Let $Y$ be a projective manifold and $G\leq\Aut(Y)$ be a finite group acting freely in codimension one. Let $\rho\colon \hat{X}\to X$ be a resolution of the quotient $X=Y/G$ such that
	\begin{enumerate}
		\item $\rho_\ast\Theta_{\hat{X}}\simeq \Theta_X$,
		\item $R^1\rho_\ast\Theta_{\hat{X}}=0$.
	\end{enumerate}
	Then, $H^1(\hat{X},\Theta_{\hat{X}})\simeq H^1(X,\Theta_X)\simeq H^1(Y,\Theta_Y)^G$.
\end{proposition}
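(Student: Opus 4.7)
The plan is to apply Proposition~\ref{prop:CondRes} with $Y=E^{n-1}\times C$ and the given diagonal $G$-action. By Corollary~\ref{cor:sing} the quotient map $Y\to X_n$ is free in codimension one (the fixed locus consists only of isolated points), so the hypotheses of the proposition are satisfied on the $Y$-side. What remains is to build a resolution $\rho\colon\hat{X}_n\to X_n$ that satisfies the two local sheaf-theoretic conditions $\rho_\ast\Theta_{\hat{X}_n}\simeq\Theta_{X_n}$ and $R^1\rho_\ast\Theta_{\hat{X}_n}=0$. Since the singularities of $X_n$ are isolated cyclic quotient singularities of the two explicit types $\tfrac{1}{\ell}(1,\ldots,1)$ and $\tfrac{1}{\ell}(1,\ldots,1,\ell-1)$ with $\ell\mid d$, the construction and the verification of both conditions are entirely local and can be done singularity by singularity.

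First, I would describe the two germs as affine toric varieties $U_\sigma=\Spec\CC[\sigma^\vee\cap M]$ with explicit cones $\sigma$ in $N=\ZZ^n$, and then construct toric resolutions $\hat U_\sigma\to U_\sigma$ by refining $\sigma$ via a suitable fan $\Sigma$: for $\tfrac{1}{\ell}(1,\ldots,1)$, the single star-subdivision at the ray through $\tfrac{1}{\ell}(1,\ldots,1)$ gives a resolution whose exceptional divisor is $\PP^{n-1}$; for $\tfrac{1}{\ell}(1,\ldots,1,\ell-1)$, one uses a Hirzebruch--Jung-style iterated star-subdivision producing a chain of smooth toric divisors. Condition (1) of Proposition~\ref{prop:CondRes} would be verified by combining the fact that $\rho$ is an isomorphism outside the singular set with normality of $\hat X_n$ and Hartogs's theorem (since for isolated quotient singularities in dimension $\geq 2$ the double-dual of $\rho_\ast\Theta_{\hat X_n}$ coincides with $\Theta_{X_n}=(\Omega^1_{X_n})^\vee$). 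Condition (2) is the main obstacle: I would check $R^1\rho_\ast\Theta_{\hat X_n}=0$ by the formal function theorem, reducing to $H^1$-vanishing of $\Theta_{\hat X_n}$ restricted to infinitesimal neighbourhoods of the exceptional divisor $E\subset\hat X_n$.

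For the exceptional divisor, this vanishing will be deduced from the toric Euler sequence on each toric stratum together with induction on the number of subdivisions, using the conormal sequence $0\to\NNN^{\vee}_{E/\hat X_n}\to\Omega^1_{\hat X_n}\vert_E\to\Omega^1_E\to 0$ and the explicit description of $\NNN_{E/\hat X_n}$ in each chart. Essentially the same toric calculation already appears in \cite{BG20} for the specific case $\ell=3,\,n\geq 3$; the general case requires no new ideas, only a bookkeeping argument that handles the chain of exceptional divisors produced by the Hirzebruch--Jung resolution and carefully tracks the weights of the $\ZZ_\ell$-action on local coordinates, which in both cases are $(1,\ldots,1)$ or $(1,\ldots,1,\ell-1)$ so that the toric boundary is manageable. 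Gluing these local resolutions to the global identity outside the singular locus yields a global resolution $\rho\colon\hat X_n\to X_n$ to which Proposition~\ref{prop:CondRes} applies, giving $H^1(\hat X_n,\Theta_{\hat X_n})\simeq H^1(Y,\Theta_Y)^G=0$ by the assumed rigidity of the diagonal $G$-action.

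Finally, for the Kodaira dimension statement under the assumption $n\geq d$, Corollary~\ref{cor:sing} guarantees that all singularities of $X_n$ are canonical. The Reid--Shepherd-Barron--Tai criterion then implies $\rho_\ast\omega_{\hat X_n}^{\otimes m}\simeq\omega_{X_n}^{[m]}$ for all $m\geq 1$, so $\hat X_n$ and $X_n$ have the same plurigenera, hence the same Kodaira dimension. Since the finite map $E^{n-1}\times C\to X_n$ is étale in codimension one and $\kappa(E^{n-1}\times C)=\kappa(C)=1$, we conclude $\kappa(\hat X_n)=1$. The main obstacle remains the verification of $R^1\rho_\ast\Theta_{\hat X_n}=0$ in the second, less symmetric, singularity type, which I expect to require a somewhat delicate inductive toric computation rather than the single-blowup argument that suffices in the $\tfrac{1}{\ell}(1,\ldots,1)$ case.
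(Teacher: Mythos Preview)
You have proved the wrong statement. Proposition~\ref{prop:CondRes} is the abstract claim that, \emph{assuming} conditions (1) and (2) hold for some resolution $\rho$, one obtains the chain of isomorphisms $H^1(\hat{X},\Theta_{\hat{X}})\simeq H^1(X,\Theta_X)\simeq H^1(Y,\Theta_Y)^G$. Your write-up does not address this at all; instead you outline how to \emph{construct} a particular $\rho$ for the specific quotients $X_n=(E^{n-1}\times C)/G$ and verify that it satisfies (1) and (2). That is the content of Theorem~\ref{theo:Resolutions} and Proposition~\ref{prop:Resolution}, not of Proposition~\ref{prop:CondRes}. In particular, the toric subdivisions, the Hartogs argument, the formal function theorem, and the Kodaira-dimension computation are all irrelevant to the statement you were asked to prove.

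The actual proof of Proposition~\ref{prop:CondRes} is short and purely homological. The second isomorphism $H^1(X,\Theta_X)\simeq H^1(Y,\Theta_Y)^G$ is exactly Remark~\ref{firstremark}(2), using that the action is free in codimension one. For the first isomorphism one runs the Leray spectral sequence
\[
E_2^{p,q}=H^p\bigl(X,R^q\rho_\ast\Theta_{\hat{X}}\bigr)\ \Longrightarrow\ H^{p+q}\bigl(\hat{X},\Theta_{\hat{X}}\bigr)
\]
and reads off the low-degree exact sequence
\[
0\to H^1(X,\rho_\ast\Theta_{\hat{X}})\to H^1(\hat{X},\Theta_{\hat{X}})\to H^0(X,R^1\rho_\ast\Theta_{\hat{X}}).
\]
Hypothesis~(2) kills the rightmost term and hypothesis~(1) identifies the leftmost with $H^1(X,\Theta_X)$, giving the desired isomorphism. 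None of this requires knowing what $Y$, $G$, or the singularities look like.
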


By Corollary~\ref{cor:sing}, our quotients have only isolated singularities. Thus, the construction of resolutions with those properties is a local problem. Furthermore, the singularities are all cyclic quotient singularities of type
\[
\frac{1}{\ell}(1,\ldots,1)\qquad\makebox{or}\qquad \frac{1}{\ell}(1,\ldots,1,\ell-1), \qquad\makebox{where}\qquad\ell\geq 2.
\]
Those singularities are known to be represented by affine toric varieties with cone $$\sigma:=\cone(e_1,\ldots, e_n)\subset\RR^n$$ and lattice
\[N_j:=\ZZ^n +\ZZ\cdot \tfrac{1}{\ell}(1,\ldots,1,a_j)\subset \RR^n,\]
where $a_1=1$ and $a_2=\ell-1$.\\
This allows us to use tools from toric geometry to construct such a resolution. The basic references for toric geometry are the textbooks \cite{F93} and \cite{CLS11}.\\
For the singularities of type $\tfrac{1}{\ell}(1,\ldots,1),\:\ell\geq 2$, we can use the toric blowup as a resolution. The verification of the conditions outlined in Proposition~\ref{prop:CondRes} is analogous to \cite{BG20}.\\
For the singularities of the other type, we generalize the construction and the proof of \cite{BG20}, where only the case $\ell=3$ was settled, to prove the following:

\begin{proposition}\label{prop:Resolution}
	Let $n\geq 3$, $\ell\geq 2$, and let $U$ be the affine cyclic quotient singularity of type $\tfrac{1}{\ell}(1,\ldots,1,\ell-1)$. Then, there exists a resolution $\rho\colon \hat{U}\to U$ of singularities with the following properties:
	\begin{enumerate}
		\item $\rho_\ast\Theta_{\hat{U}}\simeq\Theta_{U}$\: and
		\item $R^1\rho_\ast\Theta_{\hat{U}}=0.$
	\end{enumerate}
\end{proposition}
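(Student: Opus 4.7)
The plan is to construct an explicit toric resolution of $U$ by iterated subdivision of the defining cone and then to verify the two cohomological conditions, generalising the approach of \cite{BG20} from $\ell=3$ to arbitrary $\ell\geq 2$. In the lattice $N = \ZZ^n + \ZZ\cdot\tfrac{1}{\ell}(1,\ldots,1,\ell-1)$, I would introduce the lattice vectors
\[
v^{(k)} := \tfrac{1}{\ell}(k,\ldots,k,\ell-k) = k\,v^{(1)} + (1-k)e_n \in N, \qquad k=1,\ldots,\ell-1,
\]
and refine $\sigma = \cone(e_1,\ldots,e_n)$ to the fan $\hat\Sigma$ whose maximal cones are $\cone(e_1,\ldots,\widehat{e_i},\ldots, e_n, v^{(1)})$ for $i = 1,\ldots, n-1$, $\cone(e_1,\ldots,\widehat{e_i},\ldots, e_{n-1}, v^{(k-1)}, v^{(k)})$ for $i = 1,\ldots,n-1$ and $2\le k\le \ell-1$, and $\cone(e_1,\ldots,e_{n-1}, v^{(\ell-1)})$. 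A short determinant computation in $N$ shows that each of these cones is unimodular, so $\hat U := X_{\hat\Sigma}$ is smooth and the induced morphism $\rho\colon\hat U\to U$ is a resolution. It naturally factors as $\rho = \pi_1\circ\ldots\circ\pi_{\ell-1}$, where $\pi_k\colon\hat U_k\to\hat U_{k-1}$ adds the ray through $v^{(k)}$, and the intermediate variety $\hat U_k$ carries a single residual cyclic quotient singularity of type $\tfrac{1}{\ell-k}(1,\ldots,1,\ell-k-1)$, becoming smooth exactly at $k=\ell-1$.

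Condition (1) will follow by reflexivity. The sheaf $\Theta_U$ is reflexive on the normal variety $U$ and, since the singular locus has codimension $n\ge 3$, coincides with $j_\ast\Theta_{U\setminus\{0\}}$ for $j\colon U\setminus\{0\}\hookrightarrow U$. The torsion-free sheaf $\rho_\ast\Theta_{\hat U}$ agrees with $\Theta_U$ on $U\setminus\{0\}$ since $\rho$ is an isomorphism there, and its reflexive hull is therefore $\Theta_U$; the canonical factorisation $\Theta_U\hookrightarrow\rho_\ast\Theta_{\hat U}\hookrightarrow\Theta_U$ then forces both inclusions to be isomorphisms.

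For condition (2), since $R^1\rho_\ast\Theta_{\hat U}$ is supported at the origin and $U$ is affine, it suffices to prove $H^1(\hat U,\Theta_{\hat U}) = 0$. I would prove the analogous statement for the resolution $\rho_m\colon \hat X_m\to X_m$ of the $\tfrac{1}{m}(1,\ldots,1,m-1)$-singularity, $(\rho_m)_\ast\Theta_{\hat X_m} = \Theta_{X_m}$ and $R^1(\rho_m)_\ast\Theta_{\hat X_m} = 0$, by induction on $m$. Decomposing $\rho_m = \pi\circ\tilde\rho$, where $\pi$ is the first step (adding $v^{(1)}$) and $\tilde\rho$ is the resolution of the residual $\tfrac{1}{m-1}$-singularity, the inductive hypothesis applied to $\tilde\rho$ together with the Leray spectral sequence collapses the problem to the single-step vanishing $R^1\pi_\ast\Theta_{X_m^{(1)}} = 0$, where $X_m^{(1)}$ is the variety obtained after adding $v^{(1)}$. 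This in turn I would compute via \v{C}ech cohomology with respect to the affine toric cover of $X_m^{(1)}$ formed by the $n-1$ new smooth charts together with the one residual singular chart, decomposing the \v{C}ech complex into the $M$-graded pieces of $\Theta$ induced by the torus action.

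The main obstacle is precisely this graded \v{C}ech computation. On each chart the nonzero graded pieces $\Theta_m$ are supported in weights $m\in M$ satisfying $\langle m,\rho\rangle\ge -1$ on each ray $\rho$ of the chart, so only finitely many weights can contribute to the first \v{C}ech cohomology; one then has to verify case by case that every cocycle in the relevant weights is already a coboundary, and that the bookkeeping closes uniformly in $m$ so that the induction step functions identically at every level. For $\ell=3$ this is precisely the verification carried out in \cite{BG20}, which serves both as the base of the induction and as the combinatorial template for the general case.
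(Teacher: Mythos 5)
Your resolution is exactly the one the paper uses (star subdivisions at $v_k=\tfrac{1}{\ell}(k,\ldots,k,\ell-k)$, same maximal cones), but both verifications have problems.

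The argument for condition (1) is not correct. The only natural map is $\rho_*\Theta_{\hat U}\hookrightarrow \Theta_U$ (restrict to the locus where $\rho$ is an isomorphism and extend, using that $\Theta_U$ is reflexive); there is no canonical inclusion $\Theta_U\hookrightarrow\rho_*\Theta_{\hat U}$, because a vector field on $U$ need not lift to a regular vector field on a resolution. Knowing that the reflexive hull of $\rho_*\Theta_{\hat U}$ is $\Theta_U$ does not help, since $\rho_*\Theta_{\hat U}$ need not be reflexive. Indeed, your argument would equally ``prove'' that $\rho_*\Theta_{\hat X}\simeq\Theta_{\CC^n}$ for the blowup $\rho$ of $\CC^n$ at the origin, which is false: in the chart $u_1=x_1$, $u_j=x_j/x_1$ one has
\[
\frac{\partial}{\partial x_1}=\frac{\partial}{\partial u_1}-\sum_{j\geq 2}\frac{u_j}{u_1}\frac{\partial}{\partial u_j},
\]
which has a pole along the exceptional divisor. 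So condition (1) is a genuine statement about this particular resolution and must be computed. The paper does this torically: writing $\Theta$ via the Euler sequence in terms of the sheaves $\Oh(D_i)$ attached to the rays $e_i$, the equality $\rho_*\Theta_{\hat U}\simeq\Theta_U$ reduces to the identity of lattice points $P_{D_i}\cap N^\vee=P_{D_i'}\cap N^\vee$ between the polyhedra of $D_i$ upstairs and downstairs, which is then verified by the divisibility argument using $N^\vee=\{x\in\ZZ^n : \ell\mid x_1+\ldots+x_{n-1}+(\ell-1)x_n\}$.

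For condition (2), your reduction to $H^1(\hat U,\Theta_{\hat U})=0$ and the inductive factorization of $\rho$ through the partial resolutions are reasonable in outline (and your identification of the residual singularity as type $\tfrac{1}{\ell-k}(1,\ldots,1,\ell-k-1)$ is correct), but the entire content of the statement is pushed into a graded \v{C}ech computation that you do not carry out; as written this is a plan, not a proof. The paper avoids this computation altogether: using the toric Euler sequence and rationality of the singularities, it reduces $R^1\rho_*\Theta_{\hat U}=0$ to (a) $R^1\rho_*\Oh_{\hat U}(D_i)=0$, proved by exhibiting the Cartier data of each $D_i$ inside $P_{D_i}$ (global generation) and invoking Demazure vanishing, and (b) $R^1\rho_*\Oh_{E_k}(E_k)=0$, proved by identifying $E_{\ell-1}\simeq\PP^{n-1}$ and $E_k\simeq\PP(\Oh_{\PP^{n-2}}\oplus\Oh_{\PP^{n-2}}(\ell-k))$ for $k\leq\ell-2$, computing $\omega_{E_k}$, and applying Serre duality and the projection formula. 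If you want to complete your version, you would either have to do the weight-by-weight cocycle analysis in every chart uniformly in $\ell$, or adopt a structural reduction of this kind.
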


For the construction of the resolution, we use the toric description from above (set $N:=N_2$) and consider the finite sequence of star subdivisons of the cone $\sigma$ along the rays generated by the elements
\[v_k:=\tfrac{1}{\ell}(k,\ldots,k,\ell-k),\quad k=1,\ldots,\ell-1.\]
This yields a fan $\Sigma$ with maximal cones
\begin{align*}
	\begin{split}
		\sigma_i^{(0)}&:=\cone(e_1,\dots,\widehat{e_i},\dots,e_n,v_1),\hspace{1,7cm} i=1,\dots,n-1,\\
		\sigma_i^{(k)}&:=\cone(e_1,\dots,\widehat{e_i},\dots,e_{n-1},v_k,v_{k+1}),\quad i=1,\dots,n-1;\:k=1,\dots,\ell-2,\\
		\sigma_n&:=\cone(e_1,\dots,e_{n-1},v_{\ell-1}).
	\end{split}	
\end{align*}
\begin{figure}[h] 
  \centering
     \includegraphics[width=0.5\textwidth]{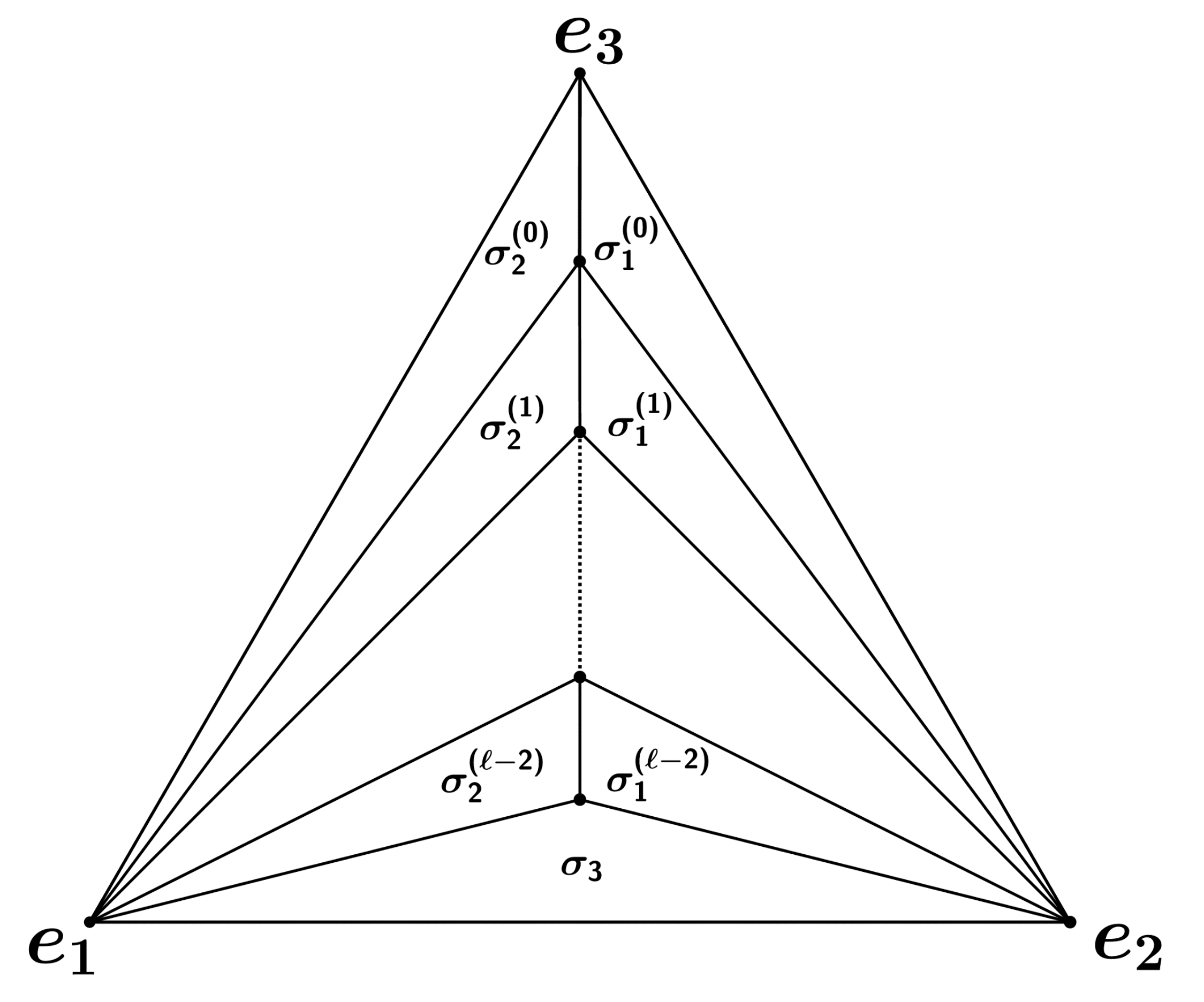}
\end{figure}

It is an easy computation to show that all these maximal cones are smooth. Therefore, the subdivisons induce a resoultion $\rho\colon U_\Sigma\to U$, where $U_\Sigma$ is the toric variety of the fan $\Sigma$. The resolution admits $\ell-1$ exceptional divisors $E_1,\ldots,E_{\ell-1}$ corresponding to the additional rays generated by $v_k,\:k=1,\ldots,\ell-1$. We denote the divisors corresponding to the rays $\RR_{\geq 0}e_i$ by $D_i$.

\begin{proposition}\label{prop:excdivisor}
	The exceptional divisors of the toric resolution can be described as follows:
	\begin{enumerate}
		\item The exceptional prime divisor $E_{\ell-1}$ is isomorphic to $\PP^{n-1}$.
		\item For $k=1,\ldots,\ell-2,$ the exceptional prime divisor $E_k$ is isomorphic to the projective bundle
		\[pr\colon E_k\simeq \PP(\Oh_{\PP^{n-2}}\oplus\Oh_{\PP^{n-2}}(\ell-k))\to\PP^{n-2}.\]
		In particular,
		\[\omega_{E_k}\simeq pr^*\Oh_{\PP^{n-2}}(-n+2)\otimes\Oh_{E_k}(E_k).\]
	\end{enumerate}
\end{proposition}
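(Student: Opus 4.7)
Proof proposal. By the orbit--cone correspondence, each $E_k$ is the closure of the torus orbit of the ray $\rho_k:=\mathbb R_{\geq 0}v_k$ and is therefore a complete toric variety whose fan is the star fan, obtained by projecting the cones of $\Sigma$ containing $\rho_k$ into the quotient lattice $N(\rho_k):=N/\mathbb Z v_k$. Everything will be read off from explicit star--fan computations.

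For (1), the cones of $\Sigma$ containing $v_{\ell-1}$ are $\sigma_i^{(\ell-2)}$ for $i=1,\dots,n-1$ and $\sigma_n$; their projections yield a fan with $n$ rays $\bar e_1,\dots,\bar e_{n-1},\bar v_{\ell-2}$ and $n$ maximal cones, each omitting one ray. Primitivity of these generators in $N(\rho_{\ell-1})$ is immediate because the smooth maximal cones of $\Sigma$ involved furnish $\mathbb Z$-bases of $N$ containing $v_{\ell-1}$, which descend to $\mathbb Z$-bases of the quotient. The identity $e_1+\cdots+e_{n-1}+v_{\ell-2}=2v_{\ell-1}$ in $N$ descends to the relation $\bar e_1+\cdots+\bar e_{n-1}+\bar v_{\ell-2}=0$, identifying this star fan with the standard fan of $\mathbb P^{n-1}$.

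For (2), applying the same recipe to $\rho_k$, $1\le k\le \ell-2$, and adopting the convention $v_0:=e_n$ to unify the boundary case $k=1$, one obtains a fan with rays $\bar e_1,\dots,\bar e_{n-1},\bar v_{k-1},\bar v_{k+1}$ and the $2(n-1)$ maximal cones $\bar\sigma_i^{(k-1)},\bar\sigma_i^{(k)}$. The numerical identities
\[
v_{k-1}+v_{k+1}=2v_k,\qquad (\ell-k+1)v_k+(k-\ell)v_{k-1}=e_1+\cdots+e_{n-1},
\]
verified by direct coordinate computation in $N$, descend to $\bar v_{k+1}=-\bar v_{k-1}$ and $\bar e_1+\cdots+\bar e_{n-1}+(\ell-k)\bar v_{k-1}=0$ in $N(\rho_k)$, matching precisely the defining relations of the fan of $\mathbb P(\mathcal O_{\mathbb P^{n-2}}\oplus\mathcal O_{\mathbb P^{n-2}}(\ell-k))$, with $\bar v_{k\pm 1}$ supplying the two natural sections and the $\bar e_i$ pulling back hyperplanes from $\mathbb P^{n-2}$.

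The canonical--bundle formula I obtain via a class computation on this toric model. Setting $H:=D_{\bar e_{n-1}}$ and $F:=D_{\bar v_{k-1}}$, the relations above force $D_{\bar e_i}\sim H$ and $D_{\bar v_{k+1}}\sim F-(\ell-k)H$, so the toric canonical divisor formula gives $K_{E_k}=-\sum_\rho D_\rho=(\ell-k-n+1)H-2F$. Since $pr^*\mathcal O_{\mathbb P^{n-2}}(-n+2)=-(n-2)H$, the assertion reduces to $\mathcal O_{E_k}(E_k)=(\ell-k-1)H-2F$. I deduce this from the toric linear equivalence $\sum_\rho\langle m,u_\rho\rangle D_\rho=0$ applied to $m=\ell\cdot e_n^{\ast}\in M$, which yields $\ell D_n+\sum_{k'=1}^{\ell-1}(\ell-k')E_{k'}=0$ in $\operatorname{Cl}(\hat U)$; restricting to $E_k$ annihilates every term outside $\operatorname{Star}(\rho_k)$, leaving an equation whose remaining coefficients---$D_n|_{E_k}$ is $F$ for $k=1$ and $0$ otherwise, while $E_{k-1}|_{E_k}=F$ and $E_{k+1}|_{E_k}=F-(\ell-k)H$---solve uniquely for $E_k|_{E_k}$ and produce the required class. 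The main subtlety is the uniform treatment of the extremal indices: setting $v_0:=e_n$ absorbs $k=1$ into the generic formula, while $k=\ell-1$ is handled entirely by part (1).
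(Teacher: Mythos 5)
Your proof is correct. The identification of the exceptional divisors is essentially the paper's own argument: both compute the star fan of the ray through $v_k$ in the quotient lattice $N(v_k)=N/\ZZ v_k$ and recognize it as the fan of $\PP^{n-1}$ (for $k=\ell-1$) resp.\ of $\PP(\Oh_{\PP^{n-2}}\oplus\Oh_{\PP^{n-2}}(\ell-k))$ (for $k\leq\ell-2$); your lattice identities $v_{k-1}+v_{k+1}=2v_k$ and $(\ell-k+1)v_k+(k-\ell)v_{k-1}=e_1+\cdots+e_{n-1}$ (both of which I have checked, including the boundary case $k=1$ under your convention $v_0:=e_n$) are exactly what underlies the paper's explicit isomorphism $\phi\colon N(v_k)\to\ZZ^{n-2}\times\ZZ$ and its appeal to \cite{CLS11}*{Example 7.3.5}. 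Where you genuinely diverge is the canonical bundle formula. The paper gets $\omega_{E_k}$ by adjunction on the ambient resolution, $\omega_{E_k}\simeq\Oh_{E_k}(-D_1-\cdots-D_n-E_1-\cdots-\widehat{E_k}-\cdots-E_{\ell-1})$, and simplifies using the single principal divisor of $e_1-(n-3)e_2+e_3+\cdots+e_n$; you instead compute $K_{E_k}$ intrinsically from the star fan and determine the normal bundle class separately by restricting $\divi(\chi^{\ell e_n^{\ast}})$ to $E_k$, which costs you the extra (but correct) bookkeeping of which boundary divisors meet $E_k$ and the remark that $\Pic(E_k)$ is torsion-free so the coefficient $\ell-k$ may be cancelled. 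The numbers agree: $E_k|_{E_k}=(\ell-k-1)H-2F$ together with $K_{E_k}=(\ell-k-n+1)H-2F$ reproduces $\omega_{E_k}\simeq pr^{*}\Oh_{\PP^{n-2}}(-n+2)\otimes\Oh_{E_k}(E_k)$. The paper's adjunction route is marginally shorter and never needs $\Oh_{E_k}(E_k)$ in closed form (which is precisely why the statement is phrased as it is, so that Serre duality can be applied directly in the proof of Proposition~\ref{prop:Resolution}); your route yields the explicit normal bundle class as a by-product, which is additional, potentially useful, information.
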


\begin{proof}
	We leave the proof for the divisor $E_{\ell-1}$ to the reader as the reasoning behind it is similar to the following proof for the other divisors (but easier). Let $k$ be in $\{1,\ldots,\ell-2\}$. As a toric variety, $E_k$ is given by the quotient lattice $N(v_k)=N/\ZZ v_k$ and quotient cones
	\[\overline{\sigma_i^{(k-1)}},\:\overline{\sigma_i^{(k)}}\subset N(v_k)_ \RR,\quad 1\leq i\leq n-1\]
	together with their faces. Denote with $u_1,\ldots,u_{n-1}$ the standard basis of $\ZZ^{n-1}$ and define $e:=u_{n-1}$ and $u_0:=-(u_1+\ldots+u_{n-2})$. The computation for $k=1$ is the same as in \cite[Proposition~5.5]{BG20}. In the case $k>1$, the quotient lattice is generated by the classes $[e_2],\ldots,[e_{n-1}],[v_{k-1}]$, and we can identify the lattice with $\ZZ^{n-2}\times \ZZ$ via the isomorphism
	\[\phi\colon N(v_k)\longrightarrow\ZZ^{n-2}\times\ZZ,\quad [e_i]\mapsto u_{i-1},\,[v_{k-1}]\mapsto -e.\]
	One can easily show that we can find integers $\mu$ and $\lambda$ such that \[e_1=-e_2-\ldots-e_{n-1}+(k-\ell)\cdot v_{k-1}+\mu\cdot v_k\quad\mathrm{and}\quad v_{k+1}=-v_{k-1}+\lambda\cdot v_k.\]
	Therefore, we get $\phi([e_1])=u_0+(\ell-k)\cdot e$ and $\phi([v_{k+1}])=e$. Using the $\RR$-linear extension of $\phi$, which identifies $N(v_k)\otimes\RR$ with $\RR^{n-1}$, the quotient cones, viewed as cones in $\RR^{n-1}$, are given by:
	\begin{align*}
	\overline{\sigma_i^{(k-1)}}&\simeq \cone(u_0+(\ell-k)\cdot e,u_1,\dots,\widehat{u_{i-1}},\dots,u_{n-2},-e),\\
	\overline{\sigma_i^{(k)}}&\simeq\cone(u_0+(\ell-k)\cdot e,u_1,\dots,\widehat{u_{i-1}},\dots,u_{n-2},e).
	\end{align*}
	According to \cite[Example 7.3.5]{CLS11}, these cones and their faces build the fan of the projective bundle $E_k\simeq\PP(\Oh\oplus\Oh(\ell-k))$.\\
	Finally, we compute the canonical bundle of $E_k$ for $k=1,\ldots,\ell-2$ as follows: By using the adjunction formula and \cite[Theorem~8.2.3]{CLS11}, we get:
	\[\omega_{E_k}\simeq\Oh_{E_k}(-D_1-\ldots-D_n-E_1-\ldots-\widehat{E_k}-\ldots-E_{\ell-1}).\]
	Since $pr^\ast\Oh_{\PP^{n-2}}(1)\simeq\Oh_{E_k}(D_2)$, compare \cite[Proposition~6.2.7]{CLS11}, and
	\[0\sim_{lin}\divi(e_1-(n-3)\cdot e_2+e_3+\ldots+e_n)=D_1-(n-3)\cdot D_2+D_3+\ldots+D_n+E_1+\ldots+E_{\ell-1},\]
	we conclude
	\[\omega_{E_k}\simeq \Oh_{E_k}(-(n-2)\cdot D_2+ E_k)\simeq pr^*\Oh_{\PP^{n-2}}(-n+2)\otimes\Oh_{E_k}(E_k).\qedhere \]
\end{proof}

\begin{proof}[Proof of Proposition~\ref{prop:Resolution}]
	Primarily, we prove that the resolution $\rho\colon U_\Sigma\to U$ satisfies the condition $\rho_\ast\Theta_{U_\Sigma}\simeq\Theta_{U}$.
	Let $D_i\subset U_\Sigma$ and $D_i'\subset U$ be the divisors corresponding to the rays generated by $e_i$. As proved in \cite[Proposition~5.8]{BG20}, we have to show that
	\[P_{D_i}\cap N^\vee=P_{D_i'}\cap N^\vee\]
	is true for all $i=1,\ldots,n$, where $P_{D_i}$ and $P_{D_i'}$ are the polyhedra of the divisors $D_i$ and $D_i'$. These polyhedra are given by the following conditions:
	\begin{align*}
		P_{D'_i}&=\{x\in\RR^n\mid x_i\geq -1,\, x_j\geq 0,\: j\neq i\}\quad\quad\mathrm{and}\\
		P_{D_i}&=P_{D'_i}\cap\{\langle x,v_k\rangle\geq 0, \:k=1,\dots,\ell-1\}=\\
		&=P_{D'_i}\cap\{kx_1+\ldots+kx_{n-1}+(\ell-k)x_n\geq 0,\:k=1,\dots,\ell-1\}.
	\end{align*}
	The dual latice $N^\vee$ consists of all points $x\in\ZZ^n$ satisfying that the sum $x_1+\ldots+x_{n-1}+(\ell-1)x_n$ is divisible by $\ell$. Obviously, the integral points of $P_{D_i}$ are contained in the polyhedron $P_{D_i'}$. So let $x$ be an element of $P_{D_i'}\cap N^\vee$ and $k\in\{1,\ldots,\ell-1\}$. Since
	\[k(x_1+\ldots+x_{n-1}+(\ell-1)x_n)=kx_1+\ldots+kx_{n-1}+(\ell-k)x_n+\ell(k-1)x_n\]
	and since the left hand side of these equation is divisible by $\ell$, the sum $kx_1+\ldots+kx_{n-1}+(\ell-k)x_n$ is divisible by $\ell$ as well. By assumption, we know that this sum is an integer at least $-(\ell-1)$. Therefore, it must be greater or equal to zero, and we conclude that $x$ is an element of $P_{D_i}$, too.\\
	
	It remains to show that $R^1\rho_\ast\Theta_{U_\Sigma}=0$. The arguments are similar to the proof of Proposition~5.10 in \cite{BG20}. Using the toric Euler sequence and the fact that $U_\sigma$ has rational singularities, one can show that this is the case if and only if
	\begin{enumerate}
		\item $R^1\rho_*\Oh_{U_\Sigma}(D_i)=0$ for all $i=1,\dots,n$,
		\item $R^1\rho_*\Oh_{E_k}(E_k)=0$ for all $k=1,\dots,\ell-2$ and
		\item $R^1\rho_*\Oh_{E_{\ell-1}}(E_{\ell-1})=0$.
	\end{enumerate}
	The vanishing of $R^1\rho_*\Oh_{E_{\ell-1}}(E_{\ell-1})$ is clear since $E'\simeq\PP^{n-1}$, which implies that the bundle $\Oh_{E_{\ell-1}}(E_{\ell-1})$ is a multiple of $\Oh_{\PP^{n-1}}(1)$, whose first cohomology vanishes ($n\geq 3$).\\
	By symmetry, it is enough to consider the cases $i=1,n$ to show (1). We start with $D_1$. The Cartier data of $D_1$ is the collection of the vectors
	\begin{align*}
		m_{\sigma_n}=-e_1+(\ell-1)\cdot e_n,\quad 
		m_{\sigma_1^{(k)}}=0\quad\mathrm{and}\quad m_{\sigma_i^{(k)}}=-e_1+e_i,
	\end{align*}
	for $i=2,\ldots,n-1$ and $k=0,\ldots,\ell-1$. Since all these vectors are elements of the polyhedron $P_{D_1}$, the sheaf $\Oh_{U_\Sigma}(D_1)$ is globally generated according to \cite[Proposition~6.1.1]{CLS11}. Using Demazure vanishing \cite[Theorem~9.2.3]{CLS11}, we conclude $R^1\rho_*\Oh_{U_\Sigma}(D_1)=0$.\\
	The Cartier data of the divisor $D_n$ is the following collection:
	\begin{align*}
		m_{\sigma_n}=0,\quad
		m_{\sigma_i^{(0)}}=-e_n+(\ell-1)\cdot e_i\quad\mathrm{and}\quad
		m_{\sigma_i^{(k)}}=0
	\end{align*}
	for $i=1,\ldots,n-1$ and $k=1,\ldots, \ell-1$. These elements are all contained in the polyhedron associated to $D_n$, and we can argue as above.\\
	Finally, let $k\in\{1,\ldots,\ell-2\}$. By Proposition~\ref{prop:excdivisor}, the canonical bundle of $E_k$ is
	\[\omega_{E_k}\simeq pr^*\Oh_{\PP^{n-2}}(-n+2)\otimes\Oh_{E_k}(E_k).\]
	Using Serre duality on $E_k$ and the projection formula, we can conclude:
		\begin{align*}
		H^1(E_k,\Oh_{E_k}(E_k))	&\simeq H^{n-2}(E_k,pr^*\Oh_{\PP^{n-2}}(-n+2))^\vee
		\simeq H^{n-2}(\PP^{n-2},\Oh_{\PP^{n-2}}(-n+2))^\vee\\
		&\simeq H^0(\PP^{n-2},\Oh_{\PP^{n-2}}(-1))=0. \qedhere
	\end{align*}
\end{proof}

{\tiny MATHEMATISCHES INSTITUT, UNIVERSIT\"AT BAYREUTH, 95447 BAYREUTH, GERMANY}

{\scriptsize\emph{E-mail address: Ingrid.Bauer@uni-bayreuth.de,}} \quad {\scriptsize\emph{Christian.Gleissner@uni-bayreuth.de,}} \quad
{\scriptsize\emph{Julia.Kotonski@uni-bayreuth.de}}

\end{document}